\setlist{nolistsep}
\newtheorem{theorem}{Theorem}
\newtheorem{lemma}[theorem]{Lemma}
\newtheorem{corollary}[theorem]{Corollary}
\newtheorem{definition}[theorem]{Definition}
\DeclareMathOperator*{\argmin}{arg\,min}
\title{De-singularity Subgradient for the $q$-th-Powered $\ell_p$-Norm Weber Location Problem}
\author {
    Zhao-Rong Lai\textsuperscript{\rm 1,\rm 2},
    Xiaotian Wu\textsuperscript{\rm 1},
    Liangda Fang\textsuperscript{\rm 3,\rm 4},
    Ziliang Chen\textsuperscript{\rm 5},
    Cheng Li\textsuperscript{\rm 2}\thanks{Corresponding author.}
}
\newenvironment{breakablealgorithm}
{
	\begin{center}
		\refstepcounter{algorithm}
		\hrule height.8pt depth0pt \kern2pt
		\renewcommand{\caption}[2][\relax]{
			{\raggedright\textbf{\ALG@name~\thealgorithm} ##2\par}%
			\ifx\relax##1\relax 
			\addcontentsline{loa}{algorithm}{\protect\numberline{\thealgorithm}##2}%
			\else 
			\addcontentsline{loa}{algorithm}{\protect\numberline{\thealgorithm}##1}%
			\fi
			\kern2pt\hrule\kern2pt
		}
	}{
		\kern2pt\hrule\relax
	\end{center}
}
\begin{document}

\maketitle

\def \bbI {\mathbb I}
\def \bbC {\mathbb C}
\def \bbR {\mathbb R}
\def \bbN {\mathbb N}
\def \bbS {\mathbb S}
\def \bbZ {\mathbb Z}
\def \bbRn {\bR^n}
\def \bbRm {\bR^m}
\def \bbRmn {\bR^{m\times n}}
\def \bbRnm {\bR^{n\times m}}
\def \bbRnn {\bR^{n\times n}}
\def \bbRmm {\bR^{m\times m}}

\def \tr {\mathrm{tr}}
\def \var {\mathrm{var}}
\def \cov {\mathrm{cov}}
\def \sgn {\mathrm{sgn}}
\def \range {\mathrm{range}}
\def \prox {\mathrm{prox}}
\def \diag {\mathrm{diag}}

\def \bA {\bm A}
\def \bB {\bm B}
\def \bC {\bm C}
\def \bD {\bm D}
\def \bH {\bm H}
\def \bI {\bm I}
\def \bK {\bm K}
\def \bL {\bm L}
\def \bP {\bm P}
\def \bQ {\bm Q}
\def \bR {\bm R}
\def \bS {\bm S}
\def \bT {\bm T}
\def \bU {\bm U}
\def \bV {\bm V}
\def \bW {\bm W}
\def \bX {\bm X}
\def \bY {\bm Y}
\def \bZ {\bm Z}

\def \ba {\bm a}
\def \bb {\bm b}
\def \bc {\bm c}
\def \bd {\bm d}
\def \be {\bm e}
\def \bh {\bm h}
\def \bl {\bm l}
\def \bp {\bm p}
\def \bq {\bm q}
\def \br {\bm r}
\def \bs {\bm s}
\def \bu {\bm u}
\def \bv {\bm v}
\def \bw {\bm w}
\def \bx {\bm x}
\def \by {\bm y}
\def \bz {\bm z}

\def \mD {\mathcal{D}}
\def \mH {\mathcal{H}}
\def \mI {\mathcal{I}}
\def \mJ {\mathcal{J}}
\def \mK {\mathcal{K}}
\def \mL {\mathcal{L}}
\def \mN {\mathcal{N}}

\def \sD {\mathscr{D}}

\def \fk {\mathfrak{k}}

\def \FixT {\text{Fix}{(T)}}
\def \sign {\mathrm{sign}}
\def \mO {\mathcal{O}}
\def \btheta {\bm{\theta}}

\newcommand{\ud}{\,\mathrm{d}}	
\newcommand\leqs{\leqslant}
\newcommand\geqs{\geqslant}

\begin{abstract}
The Weber location problem is widely used in several artificial intelligence scenarios. However, the gradient of the objective does not exist at a considerable set of singular points. Recently, a de-singularity subgradient method has been proposed to fix this problem, but it can only handle the $q$-th-powered $\ell_2$-norm case ($1\leqslant q<2$), which has only finite singular points. In this paper, we further establish the de-singularity subgradient for the $q$-th-powered $\ell_p$-norm case with $1\leqslant q\leqslant p$ and $1\leqslant p<2$, which includes all the rest unsolved situations in this problem. This is a challenging task because the singular set is a continuum. The geometry of the objective function is also complicated so that the characterizations of the subgradients, minimum and descent direction are very difficult. We develop a $q$-th-powered $\ell_p$-norm Weiszfeld Algorithm without Singularity ($q$P$p$NWAWS) for this problem, which ensures convergence and the descent property of the objective function. Extensive experiments on six real-world data sets demonstrate that $q$P$p$NWAWS successfully solves the singularity problem and achieves a linear computational convergence rate in practical scenarios.
\end{abstract}

%

\section{Introduction}\label{sec:intro}

The Weber location problem is a fundamental problem that is extensively investigated in artificial intelligence \cite{lai2024singularity}, machine learning \cite{olpsjmlr,SSPO,SPOLC}, financial engineering \cite{egrmvgap}, computer vision \cite{lqmean}, and operations research \cite{weiszconvOR}. For a general definition, it seeks a point $\bx_*$ that minimizes the weighted sum of the $q$-th power of the $\ell_p$ distances to $m$ fixed data points $\{\bx_i\}_{i=1}^m\subseteq \bbR^d$ \cite{l1median1,morris1981convergence,extendfermat2,brimberg1993global}, defined as the $q$-th-powered $\ell_p$-norm Weber location problem \textbf{($q$P$p$NWLP)}:
\begin{equation}
\setlength{\abovedisplayskip}{1pt}
\setlength{\belowdisplayskip}{1pt}
\label{mod:efw}
\bx_*\in \argmin_{\by\in\bbR^d} C_{p,q}(\by):=\sum_{i=1}^m \xi_i\|\by-\bx_i\|_p^q,
\end{equation}
where $\xi_i$ denotes the weight for the $i$-th data point, $\|\cdot\|_p$ denotes the $\ell_p$ norm, $C_{p,q}(\cdot)$ denotes the cost function related to the $q$-th power of the $\ell_p$ norm, $1\leqslant q\leqslant p$ and $p\geqs 1$. Due to the same reasons as \cite{lai2024singularity}, we can assume that the data points $\{\bx_i\}_{i=1}^m$ are distinct and non-collinear in the rest of the paper.

\subsection{The Singularity Problem}
\label{sec:singprob}
\renewcommand{\thefootnote}{\fnsymbol{footnote}}
\footnotetext[7]{The supplementary material and codes for this paper are available at \url{https://github.com/laizhr/qPpNWAWS}.}
There is no closed-form solution to \eqref{mod:efw}, and the gradient-type method is an intuitive and tractable approach. This approach includes those do not explicitly show a gradient form, like the Weiszfeld algorithm \cite{brimberg1993global}. To take a glimpse of the singularity problem, we first compute the gradient of the objective function $C_{p,q}$ as
\begin{align}
\label{eqn:costgrad}
&\big( \nabla C_{p,q}(\by) \big)^{(t)}\nonumber\\ 
{:=}&{\sum_{i=1}^m} q\xi_i\| \by-\bx_i  \|_p^{q-p}|{y}^{(t)}-{x}_i^{(t)}|^{p-2}(y^{(t)}-{x}_i^{(t)}), 
\end{align}
where $\big( \nabla C_{p,q}(\by) \big)^{(t)}$ denotes the $t$-th dimension ($1\leqs t \leqs d$) of the gradient. Note that if $q<p$ or $p<2$, the singularity problem can occur if $\by$ hits the following singular set: 
\begin{equation}\label{def:sin}
\begin{cases}
\mathcal{S}_p{:=}\bigl\{\by{\in}\bbR^d|\exists i{\in}\{1,\dots,m \}, t{\in}\{1,\dots,d\}\text{ s.t. } y^{(t)}{=}x_i^{(t)}\bigr\}\\
\qquad\qquad\qquad\qquad\qquad\qquad\text{if }1\leqs p<2,\\
\mathcal{S}_2{:=}\{\bx_i\}_{i=1}^m \qquad\qquad\qquad\text{if }1\leqs q<2, p=2.
\end{cases}	
\end{equation} 
This singularity problem occurs frequently and unexpectedly. \citet{openweiszfeld,l1median3} indicate that the ``bad'' points that can yield such singularity in a gradient-type method may constitute a continuum set that can be dense in an open region of $\mathbb{R}^d$ ($d\geqslant 2$) or even the entire $\mathbb{R}^d$. Moreover, this problem cannot be circumvented by straightforward treatments like perturbations and random restarts. More details can be found in \cite{lai2024singularity}. 

If $q\geqs p$ or $p\geqs 2$, the term $\| \by-\bx_i  \|_p^{q-p}$ or $|{y}^{(t)}-{x}_i^{(t)}|^{p-2}$ in \eqref{eqn:costgrad} will not be singular. For $p\geqs 2$ but $1\leqs q< p$, the singularity problem can be solved by \cite{lai2024singularity}. Hence this paper mainly focuses on the rest unsolved singular cases with the two key hyperparameters $1\leqs q\leqs p$ and $1\leqs p<2$. Users can select any values of $p$ and $q$ within this range to suit their specific needs. As $p$ approaches $2$, the distance between data points becomes closer to the Euclidean distance. Conversely, as $p$ approaches $1$, the distance between data points becomes closer to the Manhattan distance, representing a typical non-Euclidean geometry. On the other hand, as $q$ approaches $p$, the distance between data points gets higher power, which can be advantageous in certain computer vision tasks \cite{lqmean}. In summary, allowing for a range of values for $p$ and $q$ enhances the geometrical representation of the Weber location problem.

\subsection{Continuum Singular Set for $1\leqs p<2$}
\label{sec:infsingset}
\eqref{def:sin} indicates that when $p=2$, the singularity only occurs at the $m$ fixed data points $\{\bx_i\}_{i=1}^m$. This case has already been completely solved by \cite{l1median3,lai2024singularity}. However, \textbf{when $1\leqs p<2$, the singularity occurs in at most $m\cdot d$ hyperplanes that encompass a continuum set of points, which is much more difficult than the $p=2$ case and no solutions have been developed (see Figure \ref{fig:sing1sing2})}. The difficulties lie in the following three aspects:
\begin{enumerate}[leftmargin=*]
\item An effective gradient-type algorithm may visit each singular point for only once. But since there are infinite singular points when $1\leqs p<2$, the algorithm may visit the singular set for infinite times.
\item Based on the first point, there is no unified step size for the escape from the singular set, which affects the convergence property.
\item The geometry of the objective function is complicated when $1\leqslant q\leqs p$ and $1\leqs p<2$ so that the characterizations of the subgradients, minimum and descent direction are very difficult, which also affects the convergence property.
\end{enumerate}

\begin{figure}[!htb]
\centering
\subfloat[$p=2$ and $1\leqs q<2$]{
\centering
\includegraphics[width=0.43\columnwidth]{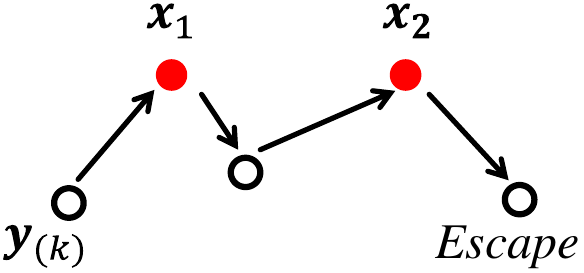}\label{fig:sing1}}
\subfloat[$1\leqs p<2$]{
\centering
\includegraphics[width=0.54\columnwidth]{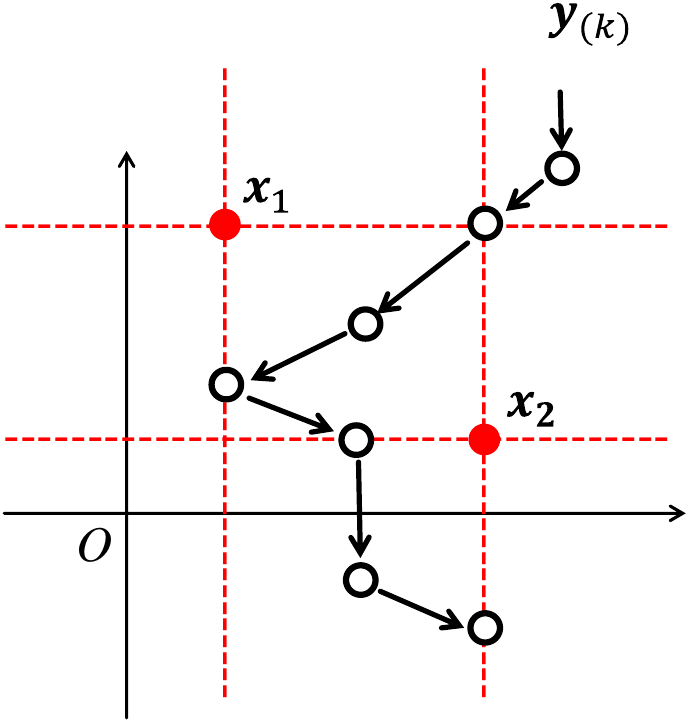}\label{fig:sing2}}
\caption{(a) When $p=2$, the singular set $\mathcal{S}_2$ (the red dots) is finite and an effective gradient-type algorithm visits each singular point for only once. Hence the iterate $\by_{(k)}$ (the circle) can finally escape from all the singular points. (b) When $1\leqs p<2$, the singular set $\mathcal{S}_p$ is a continuum (the red dashed lines), hence the iterate $\by_{(k)}$ may revisit $\mathcal{S}_p$ for infinite times and may not escape from $\mathcal{S}_p$.}
\label{fig:sing1sing2}
\end{figure}

To address the above difficulties, we develop a complete de-singularity subgradient methodology for $q$P$p$NWLP with $1\leqs q\leqs p$ and $1\leqs p<2$, including all the rest unsolved situations in this problem. \textbf{Our main contributions are summarized as follows.}
\begin{enumerate}[leftmargin=*]
\item We develop a de-singularity subgradient of the cost function $C_{p,q}$ on the singular set $\mathcal{S}_p$. It can replace the ordinary gradient without increasing computational complexity.
\item We develop a $q$-th-Powered $\ell_p$-Norm Weiszfeld Algorithm without Singularity (\textbf{$q$P$p$NWAWS}). It can identify whether the current iterate is a minimum point; If not, it can further reduce the cost function, no matter whether this iterate is singular or nonsingular. By this way, $q$P$p$NWAWS solves the singularity problem.
\item We develop a complete proof for the convergence of $q$P$p$NWAWS.
\item We demonstrate that $q$P$p$NWAWS achieves a linear computational convergence rate in practical scenarios.
\end{enumerate}

\section{Related Works}
We review some closely related works on $q$P$p$NWLP in this section.

\subsection{The $\ell_p$-Norm Weiszfeld Algorithm}
\label{sec:lqIRLSinterpret}
The $\ell_p$-norm Weiszfeld algorithm ($p$NWA) can be derived by the first-order optimal condition of the $q$P$p$NWLP with $1\leqs p\leqs 2$ and $q=1$ \cite{brimberg1993global}. For a \textbf{nonsingular optimal point} $\bx_*\notin\mathcal{S}_p$, setting $\big( \nabla C_{p,1}(\bx_*) \big)^{(t)}=0$ in \eqref{eqn:costgrad} yields 
\begin{equation}
\setlength{\abovedisplayskip}{1pt}
\setlength{\belowdisplayskip}{1pt}
\label{eq:ystar1}
x_*^{(t)}{=}\frac{{\sum_{i=1}^m} \xi_i\| \bx_*{-}\bx_i  \|_p^{1-p}|x_*^{(t)}{-}{x}_i^{(t)}|^{p-2}{x}_i^{(t)}}{\sum_{i=1}^m \xi_i\| \bx_*{-}\bx_i  \|_p^{1-p}|x_*^{(t)}{-}{x}_i^{(t)}|^{p-2}}, \forall 1{\leqs} t{\leqs} d.
\end{equation} 
These are fixed-point equations, which can be converted to a fixed-point algorithm with the following operator $\mathbf{T}_{p,1}$:
\begin{align}
\label{eqn:lqwa3}
y_{(k+1)}^{(t)}{:=}&  \big(\mathbf{T}_{p,1}(\by_{(k)})\big)^{(t)}\nonumber\\{:=}&\frac{{\sum_{i=1}^m} \xi_i\| \by_{(k)}-\bx_i  \|_p^{1-p}|y_{(k)}^{(t)}-{x}_i^{(t)}|^{p-2}{x}_i^{(t)}}{\sum_{i=1}^m \xi_i\| \by_{(k)}-\bx_i  \|_p^{1-p}|y_{(k)}^{(t)}-{x}_i^{(t)}|^{p-2}},
\end{align}
where $y_{(k)}^{(t)}$ denotes the $t$-th dimension of the $k$-th iterate. \textbf{When $\by_{(k)}$ hits the singular set $\mathcal{S}_p$, $p$NWA cannot solve it but just terminates at $\by_{(k)}$.}

\subsection{$q$-th Power Weiszfeld Algorithm without Singularity}
\citet{lai2024singularity} propose a $q$-th power Weiszfeld algorithm without singularity ($q$PWAWS) to handle the singularity problem in the special case $p=2, 1\leqs q<2$:
\begin{equation}
\label{eqn:lqwagen1}
\by_{(k+1)}{=}
\begin{cases}
\mathbf{T}_{2,q}( \by_{(k)}):=\frac{\sum_{i=1}^m \xi_i\| \by_{(k)}-\bx_i  \|_2^{q-2}\bx_i}{\sum_{i=1}^m \xi_i\| \by_{(k)}-\bx_i  \|_2^{q-2}}\\
\qquad\qquad\qquad\qquad\text{ if }\by_{(k)}{\notin} \{\bx_i\}_{i=1}^m,\\
\mathbf{T}_s( \by_{(k)}):=\by_{(k)}{-}\lambda_* \nabla D_{2,q}(\by_{(k)})\\
\quad\text{ if }\by_{(k)}{=}\bx_l\text{ for some }\ l{\in}\{1,\dots,m\},
\end{cases}
\end{equation}
where 
\begin{equation}
\setlength{\abovedisplayskip}{1pt}
\setlength{\belowdisplayskip}{1pt}
\label{def_desingp2}
\nabla D_{2,q}(\by_{(k)})=\sum_{i\ne l} q\xi_i\| \by_{(k)}-
\mathbf{x}_i  \|_2^{q-2}(\by_{(k)}-\mathbf{x}_i)
\end{equation}
is the $q$-th-powered $\ell_2$-norm de-singularity subgradient and $\lambda_*>0$. Intuitively, $\nabla D_{2,q}$ removes the singular component corresponding to $\bx_l$ and serves as a conventional gradient in the subgradient descent step $\mathbf{T}_s$.

$q$PWAWS can escape from each singular point and will not revisit it again. Since there are finite singular points when $p=2$, $q$PWAWS can eventually escape from the singular set $\mathcal{S}_2$ (see Figure \ref{fig:sing1}). \textbf{However, it cannot be directly adopted in the $1\leqs p< 2$ case, because the singular set $\mathcal{S}_p$ is a continuum and $q$PWAWS may not eventually escape from $\mathcal{S}_p$. Moreover, the step size $\lambda_*$ in $\mathbf{T}_s$ cannot be uniformly chosen with respect to (w.r.t.) $\mathcal{S}_p$ due to its infinite elements. The characterizations of the subgradients, minimum and descent direction are also difficult. These three problems affect the convergence property of $q$PWAWS.} Therefore, a new methodology should be developed to overcome these new challenges for the $1\leqs p< 2$ case.

\section{$q$-th-Powered $\ell_p$-Norm Weiszfeld Algorithm without Singularity}
In this section, we present $q$P$p$NWAWS for solving $q$P$p$NWLP with $1\leqs p<2$ and $1\leqs q\leqs p$. For a convenient illustration, we let $\eta_i:={\xi_i}^{\frac{1}{q}}$ and reformulate \eqref{mod:efw} as
\begin{equation}
\setlength{\abovedisplayskip}{1pt}
\setlength{\belowdisplayskip}{1pt}
\label{eqn:lpqmediangenreal}
\bx_*\in \argmin_{\by\in\bbR^d} C_{p,q}(\by):=\sum_{i=1}^m \eta_i^q\|\by-\bx_i\|_p^q.
\end{equation}
$C_{p,q}$ is strictly convex and there exists a unique minimum point $\bx_*$ for \eqref{eqn:lpqmediangenreal} with $1<p<2$, while $C_{p,q}$ is convex and there exists one or more minimum points for \eqref{eqn:lpqmediangenreal} with $p=1$. The construction of $q$P$p$NWAWS consists of $4$ steps: 
\begin{enumerate}
\item The update at a nonsingular iterate $\by_{(k)}\notin \mathcal{S}_p$ is designed as a Weiszfeld-style one, which makes $C_{p,q}(\by_{(k)})$ non-increasing.
\item Define the $q$-th-powered $\ell_p$-norm de-singularity subgradient $\nabla D_{p,q}(\by)$ for $\by\in\mathcal{S}_p$ to characterize the subgradients of $C_{p,q}(\by)$ and the minimum of $C_{p,q}$.
\item Adopt $\nabla D_{p,q}(\by_{(k)})$ to construct the iterative update at the singular iterate $\by_{(k)}\in\mathcal{S}_p$ to reduce the cost function.
\item Establish the convergence proof for $q$P$p$NWAWS.
\end{enumerate}

\subsection{Update Formula at Nonsingular Iterates}
\label{sec:genupdatewc}
First, we consider the simplest case where the current iterate $\by_{(k)}\notin \mathcal{S}_p$. The corresponding update formula is:
\begin{align}
\label{eqn:lqwaeta}
y_{(k+1)}^{(t)}&:= \big(\mathbf{T}_{p,q}(\by_{(k)})\big)^{(t)}\nonumber\\
&{:=}\frac{\sum_{i=1}^m \eta_i^q\| \by_{(k)}{-}\bx_i  \|_p^{q{-}p}|{y}_{(k)}^{(t)}{-}{x}_i^{(t)}|^{p{-}2}{x}_i^{(t)}}{\sum_{i=1}^m \eta_i^q\| \by_{(k)}{-}\bx_i  \|_p^{q{-}p}|{y}_{(k)}^{(t)}{-}{x}_i^{(t)}|^{p{-}2}}.
\end{align}
The following descent property guarantees that \eqref{eqn:lqwaeta} will reduce $C_{p,q}$ at any non-minimum nonsingular iterate.
\begin{theorem}[Descent Property at Nonsingular Iterates]
\label{thm:nonincreasing}
Let the cost function $C_{p,q}$ and the operator $\mathbf{T}_{p,q}$ be defined in \eqref{eqn:lpqmediangenreal} and \eqref{eqn:lqwaeta}, respectively. For $1\leqs p<2$ and $1\leqs q\leqs p$, if $\by_{(k)}\notin \mathcal{S}_p$, then $C_{p,q}(\mathbf{T}_{p,q}( \by_{(k)}))\leqs C_{p,q}(\by_{(k)})$ with equality holds only when $\mathbf{T}_{p,q}( \by_{(k)})= \by_{(k)}$.
\end{theorem}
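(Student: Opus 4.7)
My plan is a majorization--minimization (MM) argument. I will construct a surrogate $\Psi(\by;\by_{(k)})$ that is a strictly convex quadratic in $\by$ satisfying (i) $C_{p,q}(\by)\leqs \Psi(\by;\by_{(k)})$ for every $\by$, (ii) $C_{p,q}(\by_{(k)})=\Psi(\by_{(k)};\by_{(k)})$, and (iii) $\argmin_{\by}\Psi(\by;\by_{(k)})=\mathbf{T}_{p,q}(\by_{(k)})$. Once these are in hand, the MM chain
\[
C_{p,q}\bigl(\mathbf{T}_{p,q}(\by_{(k)})\bigr)\leqs \Psi\bigl(\mathbf{T}_{p,q}(\by_{(k)});\by_{(k)}\bigr)\leqs \Psi(\by_{(k)};\by_{(k)})=C_{p,q}(\by_{(k)})
\]
yields the descent inequality, and strict convexity of $\Psi$ will make the middle step strict whenever $\mathbf{T}_{p,q}(\by_{(k)})\ne\by_{(k)}$, handling the equality clause.

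To build $\Psi$ I will compose two tangent-line upper bounds, exploiting that $q/p\in(0,1]$ and $p/2\in[\tfrac{1}{2},1)$. First, concavity of $t\mapsto t^{q/p}$ at $t=\|\by_{(k)}-\bx_i\|_p^p$ gives
\[
\|\by-\bx_i\|_p^q\leqs \|\by_{(k)}-\bx_i\|_p^q+\tfrac{q}{p}\|\by_{(k)}-\bx_i\|_p^{q-p}\bigl(\|\by-\bx_i\|_p^p-\|\by_{(k)}-\bx_i\|_p^p\bigr).
\]
Second, concavity of $s\mapsto s^{p/2}$ applied coordinate-wise at $s=(y_{(k)}^{(t)}-x_i^{(t)})^2$ gives
\[
|y^{(t)}{-}x_i^{(t)}|^p\leqs |y_{(k)}^{(t)}{-}x_i^{(t)}|^p+\tfrac{p}{2}|y_{(k)}^{(t)}{-}x_i^{(t)}|^{p-2}\bigl((y^{(t)}{-}x_i^{(t)})^2-(y_{(k)}^{(t)}{-}x_i^{(t)})^2\bigr).
\]
Because the outer coefficient $\tfrac{q}{p}\|\by_{(k)}-\bx_i\|_p^{q-p}$ is nonnegative, I may substitute the sum over $t$ of the second bound in place of $\|\by-\bx_i\|_p^p$ in the first bound. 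The $\pm\|\by_{(k)}-\bx_i\|_p^p$ pieces then cancel, leaving only a quadratic in $\by$. Weighting by $\eta_i^q$ and summing over $i$ defines $\Psi(\by;\by_{(k)})$; properties (i) and (ii) are immediate from the tangent construction.

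For (iii), $\Psi$ is separable across coordinates, and the hypothesis $\by_{(k)}\notin \mathcal{S}_p$ guarantees that every coefficient $\eta_i^q\|\by_{(k)}-\bx_i\|_p^{q-p}|y_{(k)}^{(t)}-x_i^{(t)}|^{p-2}$ is a finite strictly positive number (no factor vanishes or blows up). Hence the per-coordinate Hessian is strictly positive, and setting the one-dimensional derivative to zero reproduces exactly the weighted average in \eqref{eqn:lqwaeta}, i.e.\ $\mathbf{T}_{p,q}(\by_{(k)})$. This identification is a one-line normal-equation check. Positivity of the Hessian then makes the second MM inequality strict unless $\mathbf{T}_{p,q}(\by_{(k)})=\by_{(k)}$, giving the equality case.

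The step I expect to be the main obstacle is the algebraic bookkeeping in composing the two tangent bounds: one has to keep careful track of the two negative exponents $q-p\leqs 0$ and $p-2<0$, and verify that after the substitution the constant and linear pieces in $\|\by-\bx_i\|_p^p$ telescope so that $\Psi$ is exactly constant-plus-quadratic in $\by$ with a diagonal positive-definite Hessian. After that composition is in place, identifying $\mathbf{T}_{p,q}(\by_{(k)})$ as the minimizer and closing the MM inequalities are routine.
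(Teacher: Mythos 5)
Your plan is correct, and every step you flag as routine does go through: both tangent bounds are legitimate (the base points $\|\by_{(k)}-\bx_i\|_p^p$ and $(y_{(k)}^{(t)}-x_i^{(t)})^2$ are strictly positive precisely because $\by_{(k)}\notin\mathcal{S}_p$ and the data points are distinct), the constant terms telescope as you predict, and the resulting $\Psi$ is a diagonal positive-definite quadratic whose normal equations reproduce \eqref{eqn:lqwaeta}. The comparison with the paper is instructive: the paper works with the very same surrogate (its $\sum_t\tilde{C}_{p,q}^{(t)}$ is $\tfrac{2}{q}$ times your quadratic part) and the very same inequalities — its Lemma on $a^{1/u}b^{1/v}\geqslant \tfrac{a}{u}+\tfrac{b}{v}$ with $u=\tfrac{p}{p-2}$, $v=\tfrac{p}{2}$ and then $u=\tfrac{q}{q-p}$, $v=\tfrac{q}{p}$ is exactly your two concave-power tangent bounds written multiplicatively — but it never asserts that the surrogate majorizes $C_{p,q}$ globally. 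Instead it evaluates the surrogate only at $\by_{(k+1)}$, lower-bounds that value by $(1-\tfrac{2}{q})C_{p,q}(\by_{(k)})+\tfrac{2}{q}C_{p,q}(\by_{(k+1)})$ (a combination with one negative weight), upper-bounds it by $C_{p,q}(\by_{(k)})$ via the minimizing property, and rearranges. Your MM packaging is logically equivalent — rearranging the paper's inequality at a generic $\by$ recovers exactly $C_{p,q}(\by)\leqslant\Psi(\by;\by_{(k)})$ — but it is cleaner: the descent and the equality case both fall out of the standard sandwich plus strict convexity of $\Psi$, without the slightly delicate sign bookkeeping around the negative coefficient $1-\tfrac{2}{q}$.
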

The proof is provided in Supplementary \ref{sm_thm:nonincreasing}. The following corollary characterizes a minimum point $\bx_*$ of \eqref{eqn:lpqmediangenreal} if $\bx_*\notin \mathcal{S}_p$.
\begin{corollary}
\label{cor:charnonsing}
If $\by_{(k)}\notin\mathcal{S}_p$, then $\mathbf{T}_{p,q}(\by_{(k)})= \by_{(k)} \Leftrightarrow$ $\by_{(k)}$ is a minimum point of model \eqref{eqn:lpqmediangenreal}, i.e., $\by_{(k)}=\bx_*$.
\end{corollary}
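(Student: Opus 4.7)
}
The plan is to establish both directions by linking the fixed-point equation $\mathbf{T}_{p,q}(\by_{(k)}) = \by_{(k)}$ to the vanishing of the ordinary gradient $\nabla C_{p,q}(\by_{(k)})$, which is well defined because $\by_{(k)}\notin \mathcal{S}_p$, and then invoking convexity of $C_{p,q}$.

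For the direction ($\Leftarrow$), I would simply appeal to Theorem \ref{thm:nonincreasing}. If $\by_{(k)} = \bx_*$ is a minimum point but $\mathbf{T}_{p,q}(\by_{(k)})\neq \by_{(k)}$, then the theorem's strict-descent clause gives $C_{p,q}(\mathbf{T}_{p,q}(\by_{(k)})) < C_{p,q}(\by_{(k)})$, contradicting the minimality of $\by_{(k)}$. Hence $\mathbf{T}_{p,q}(\by_{(k)}) = \by_{(k)}$.

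For the direction ($\Rightarrow$), I would clear the denominator in each coordinate of \eqref{eqn:lqwaeta} and rearrange:
\begin{equation*}
\sum_{i=1}^m \eta_i^q\|\by_{(k)}-\bx_i\|_p^{q-p}\,|y_{(k)}^{(t)}-x_i^{(t)}|^{p-2}\bigl(y_{(k)}^{(t)}-x_i^{(t)}\bigr) = 0, \quad 1\leqs t\leqs d.
\end{equation*}
Comparing with the gradient expression \eqref{eqn:costgrad} (written for $\eta_i^q$ in place of $\xi_i$), this is exactly $\tfrac{1}{q}\bigl(\nabla C_{p,q}(\by_{(k)})\bigr)^{(t)} = 0$ for every $t$, so $\nabla C_{p,q}(\by_{(k)}) = \bm 0$. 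Since $C_{p,q}$ is convex (strictly convex for $1<p<2$) on $\bbR^d$, every stationary point is a global minimum; thus $\by_{(k)}$ solves \eqref{eqn:lpqmediangenreal}.

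The only delicate point, which I would flag explicitly, is that all the manipulations above hinge on the nonsingularity assumption: the factors $\|\by_{(k)}-\bx_i\|_p^{q-p}$ and $|y_{(k)}^{(t)}-x_i^{(t)}|^{p-2}$ are finite and strictly positive precisely because $\by_{(k)}\notin \mathcal{S}_p$, so division by the denominator in \eqref{eqn:lqwaeta} and the use of the classical gradient \eqref{eqn:costgrad} are both legitimate. I do not expect any nontrivial obstacle beyond this bookkeeping; the substantive descent content has already been absorbed into Theorem \ref{thm:nonincreasing}, and convexity of $C_{p,q}$ is a standard consequence of $\|\cdot\|_p^q$ being convex for $p\geqs 1,\ q\geqs 1$.
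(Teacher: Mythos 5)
Your proof is correct and follows essentially the same route as the paper's: for a nonsingular iterate, clearing the denominator in \eqref{eqn:lqwaeta} shows the fixed-point condition is equivalent to $\nabla C_{p,q}(\by_{(k)})=\bm{0}_d$ coordinate-wise, and convexity of $C_{p,q}$ then identifies stationary points with global minimizers. The only cosmetic difference is that you invoke the strict-descent clause of Theorem \ref{thm:nonincreasing} for the ($\Leftarrow$) direction, whereas the paper notes that the algebraic identity $\mathbf{T}_{p,q}(\by_{(k)})=\by_{(k)}\Leftrightarrow\nabla C_{p,q}(\by_{(k)})=\bm{0}_d$ is already bidirectional, so both directions follow from it and convexity alone.
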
	
The proof is provided in Supplementary \ref{sm_{cor:charnonsing}}. We then turn to the singular case.

\subsection{Characterization of Subgradients and Minimum}
Before we derive the iterative update for the singular iterate $\by_{(k)}\in\mathcal{S}_p$, we first introduce the de-singularity subgradient of $C_{p,q}(\by_{(k)})$ and characterize the minimum point(s) of \eqref{eqn:lpqmediangenreal}. 
\begin{definition}[Subgradient, \citealt{rockafellar2009variational}]
\label{def:convsubdifferential}
Let $C_{p,q}:\bbR^d\to\bbR$ be a convex function. A vector $\bv\in\bbR^d$ is called a subgradient of $C_{p,q}$ at $\by\in\bbR^d$ if for all $\bz{\in}\mathbb{R}^d$, 
\begin{equation}
\label{eqn:convsubdifferential}
C_{p,q}(\bz){-}C_{p,q}(\by){\geqs}\bm{v}^{\top}(\bm{z}-\bm{y}).
\end{equation}
The set of all subgradients at $\by$ is denoted by $\partial C_{p,q}(\by)$. If $C_{p,q}$ is differentiable at $\by$, then $\partial C_{p,q}(\by)$ reduces to the gradient $\nabla C_{p,q}(\by)$. 
\end{definition}
To construct $\partial C_{p,q}(\by_{(k)})$, we need to identify the singular component(s) of $\by_{(k)}$.

\begin{definition}[Singular Component(s)]
For $\by\in\mathcal{S}_p$, each $t\in\{1,\dots,d\}$ and each $i\in\{1,2,\dots,m\}$, let
\begin{align}
\label{def:ui}	
U_i(\by)&:=\{t\in\{1,\dots,d\}:\ y^{(t)}=x_i^{(t)} \},\\
\label{def:vt}
V_t(\by)&:=\{i\in\{1,\dots,m\}:\ y^{(t)}=x_i^{(t)} \},
\end{align}
which represent the index sets of the dimensions and the data points such that ${y}^{(t)}={x}_i^{(t)}$, respectively. 
\end{definition}
Figure \ref{fig:uivt} shows an intuitive example for $U_i(\by_{(k)})$ and $V_t(\by_{(k)})$.

\begin{figure}[!htb]
\centering
\includegraphics[width=0.76\columnwidth]{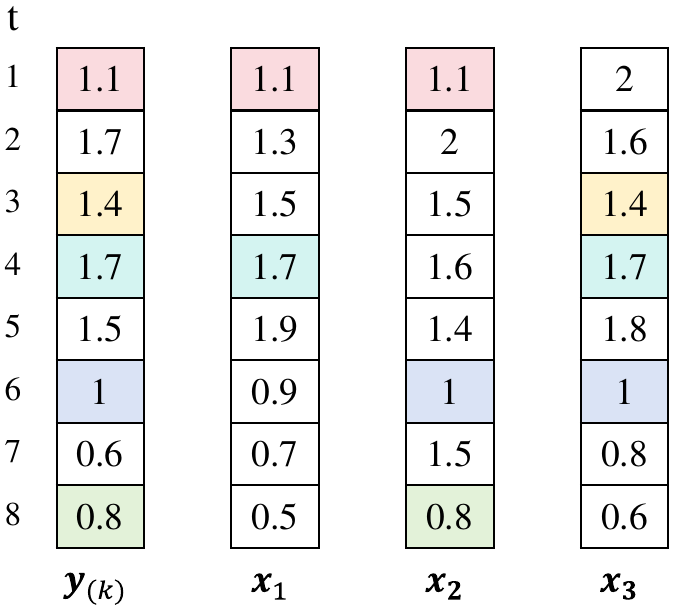}
\caption{An intuitive example for $U_i(\by_{(k)})$ and $V_t(\by_{(k)})$: $U_1(\by_{(k)})=\{1,4\}$, $U_2(\by_{(k)})=\{1,6,8\}$, $U_3(\by_{(k)})=\{3,4,6\}$, and $V_1(\by_{(k)})=\{1,2\}$, $V_2(\by_{(k)})=\emptyset$, $V_3(\by_{(k)})=\{3\}$, $V_4(\by_{(k)})=\{1,3\}$, $V_6(\by_{(k)})=\{2,3\}$, $V_8(\by_{(k)})=\{2\}$.}
\label{fig:uivt}
\end{figure}

\begin{definition}[$\mathbf{q}$\textbf{-th-Powered $\ell_p$-Norm De-singularity Subgradient}]
\label{defn:desinggrad}
By removing the singular term(s), we define the de-singularity part $D_{p,q}:\bbR^d\to\bbR$ of $C_{p,q}$  and the $q$-th-powered $\ell_p$-norm de-singularity subgradient as follows:
\begin{align}
D_{p,q}(\by)&:=\sum_{i=1}^m\eta_i^q\big(\sum_{t\notin U_i(\by)}| {y}^{(t)}-{x}_i^{(t)}|^p\big)^{\frac{q}{p}},\label{eqn:dq}	\\
\big(\nabla D_{p,q}(\by)& \big)^{(t)}  \nonumber\\
{:=}\sum_{i\notin V_t(\by)} q\eta_i^q&\| \by{-}\bx_i  \|_p^{q-p}|{y}^{(t)}{-}{x}_i^{(t)}|^{p-2}(y^{(t)}{-}{x}_i^{(t)}).\label{eqn:dqgrad}
\end{align}
\end{definition}

\begin{theorem}[Characterization of Subgradients and Minimum]
\label{thm:charsing}
For $\by\in\mathcal{S}_p$,
\begin{equation}
\setlength{\abovedisplayskip}{1pt}
\setlength{\belowdisplayskip}{1pt}
\label{eqn:subgrad}
\partial C_{p,q}(\by){=}
\begin{cases}
\{\nabla D_{1,1}(\by){+}\bu\}\ \text{where}\ -a^{(t)}{\leqs} u^{(t)}{\leqs} a^{(t)},\\
 \qquad \forall t,\qquad \text{if }p=q=1,\\
\{\nabla D_{p,1}(\by)\},\ \text{if }q{=}1,1{<}p{<}2, \\
\qquad\qquad\qquad\qquad\by\in \mathcal{S}_p\backslash\{\bx_i\}_{i=1}^m,\\
\{\nabla D_{p,1}(\bx_l)+\eta_l \bb\}\quad \text{where}\quad \|\bb\|_r\leqs 1,\\
\qquad\qquad\qquad \text{if }q{=}1,1{<}p{<}2, \by=\bx_l,\\
\{\nabla D_{p,q}(\by)\},\ \text{if }1<q\leqs p,1<p<2,
\end{cases}
\end{equation}
where $a^{(t)}=\sum_{i\in V_t(\by) }\eta_i$ and $\|\cdot\|_r$ is the conjugate norm of $\|\cdot\|_p$ such that $\frac{1}{r}+\frac{1}{p}=1$.
\end{theorem}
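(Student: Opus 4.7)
The strategy is to apply the Moreau--Rockafellar sum rule to $C_{p,q}(\by)=\sum_{i=1}^m\eta_i^q\|\by-\bx_i\|_p^q$: each summand is real-valued and convex on $\bbR^d$, so no relative-interior qualification is needed and
\[
\partial C_{p,q}(\by)=\sum_{i=1}^m\eta_i^q\,\partial\bigl(\|\by-\bx_i\|_p^q\bigr).
\]
The problem therefore reduces to computing the subdifferential of $\|\cdot\|_p^q$ at the shifted point $\bz_i:=\by-\bx_i$, and the answer depends only on whether $\bz_i=0$ and, when $\bz_i\ne 0$, which coordinates of $\bz_i$ vanish.

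Before splitting into cases I would record two facts that handle every non-smooth situation. First, $\partial\|\cdot\|_p(0)$ is the dual unit ball $\{\bb:\|\bb\|_r\leqs 1\}$ with $1/p+1/r=1$ (standard convex analysis), while $\|\cdot\|_p^q$ for $q>1$ is Fr\'echet differentiable at $0$ with gradient $0$, since $q$-homogeneity forces $\nabla(\|\cdot\|_p^q)(\bz)=O(\|\bz\|_p^{q-1})\to 0$ as $\bz\to 0$. Second, for $1<p<2$ and $\bz\ne 0$ with $z^{(t)}=0$, the $t$-th partial of $\|\cdot\|_p^q$ vanishes: rewriting the nominal $\|\bz\|_p^{q-p}|z^{(t)}|^{p-2}z^{(t)}$ as $\|\bz\|_p^{q-p}|z^{(t)}|^{p-1}\sgn z^{(t)}$ removes the apparent $0\cdot\infty$ and $p>1$ forces the value to zero. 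These two observations let me compute each summand without touching a singular expression.

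With those in hand, the four cases become index tracking. For $1<q\leqs p$ and $1<p<2$ every summand is $C^1$ at $\by$, and the coordinate-vanishing fact discards exactly the indices $i\in V_t(\by)$ from each partial, producing $\{\nabla D_{p,q}(\by)\}$. For $q=1,\ 1<p<2,\ \by\in\mathcal{S}_p\setminus\{\bx_i\}_{i=1}^m$ the same argument works because no $\bz_i$ is zero. For $q=1,\ \by=\bx_l$ only the $l$-th summand is non-smooth and contributes $\eta_l\{\bb:\|\bb\|_r\leqs 1\}$; the remaining summands sum to $\nabla D_{p,1}(\bx_l)$, and the $l$-th term is automatically absent since $l\in V_t(\bx_l)$ for every $t$. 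For $p=q=1$ the norm decomposes coordinate-wise into $\sum_t|y^{(t)}-x_i^{(t)}|$; the 1-D subdifferential at each coordinate is $\{\sgn(y^{(t)}-x_i^{(t)})\}$ when $i\notin V_t(\by)$ and $\eta_i[-1,1]$ otherwise, and Minkowski-summing the intervals produces $[-a^{(t)},a^{(t)}]$ with $a^{(t)}=\sum_{i\in V_t(\by)}\eta_i$. The companion characterization of the minimum is then the first-order condition $0\in\partial C_{p,q}(\by)$ read off each of the four cases.

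The main obstacle is the formal-vs-actual discrepancy in the partial-derivative formula for $1<p<2$ at coordinates where $z^{(t)}=0$: the nominal expression reads $0\cdot\infty$, and a clean justification, via the $|z|^{p-1}\sgn z$ rewrite or a one-sided limit argument using $p>1$, must be established before the case analysis can proceed without ambiguity. Once this hurdle is crossed, the rest is bookkeeping to check that $V_t(\by)$ encodes exactly which summands drop out of the $t$-th component of the aggregate (sub)gradient in every case.
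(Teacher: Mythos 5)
Your proof is correct, but it takes a genuinely different route from the paper's. The paper never decomposes the subdifferential summand by summand: it computes the one-sided directional derivative $\frac{\ud}{\ud\lambda}C_{p,q}(\by+\lambda \bz)\arrowvert_{\lambda=0}$ of the aggregate cost along an arbitrary direction $\bz$, splits it into a smooth part $M_{p,q}$ and a singular part $G_{p,q}$, and then characterizes $\partial C_{p,q}(\by)$ via the support-function criterion $\bv\in\partial C_{p,q}(\by)\Leftrightarrow\bv^{\top}\bz\leqs \frac{\ud}{\ud\lambda}C_{p,q}(\by+\lambda\bz)\arrowvert_{\lambda=0}$ for all $\bz$; in the case $q=1$, $\by=\bx_l$, the set equality is proved by a two-sided H\"older argument with an explicitly constructed extremal direction. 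You instead invoke the Moreau--Rockafellar sum rule (valid here since every summand is finite and convex on all of $\bbR^d$) and reduce everything to per-summand facts: the dual unit ball $\{\bb:\|\bb\|_r\leqs 1\}$ as $\partial\|\cdot\|_p(\bm{0}_d)$, Fr\'echet differentiability of $\|\cdot\|_p^q$ at the origin for $q>1$ (from $\|\bz\|_p^q=o(\|\bz\|_p)$), and the vanishing of the $t$-th partial at nonzero points with $z^{(t)}=0$ when $p>1$. This outsources the hardest step (the dual-ball identity, which encapsulates the paper's H\"older construction) to a textbook result and makes the four cases pure index bookkeeping, which is arguably cleaner. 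One point you should make explicit: existence and vanishing of the partial derivatives does not by itself show that each summand's subdifferential is a singleton; you need the standard fact that a finite convex function whose partial derivatives all exist at a point is differentiable there. What the paper's route buys in exchange is the explicit directional-derivative formulas \eqref{eqn:lpqgradlbd01var}, \eqref{eqn:lqgradlbd01} and \eqref{eqn:lpqgradlbd0pq}, which are reused verbatim in the proof of Theorem \ref{thm:singmin} to exhibit the descent direction; your sum-rule argument establishes the subdifferential identity but would need a small supplement to recover those formulas.
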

The proof is provided in Supplementary \ref{sm_thm:charsing}. According to Fermat's rule, $\by\in\mathcal{S}_p$ is a minimum point of \eqref{eqn:lpqmediangenreal} if and only if $\mathbf{0}_d\in \partial C_{p,q}(\by)$, which is easy to verify. If $\by_{(k)}$ is not a minimum point, the following theorem shows a descent direction of $C_{p,q}(\by_{(k)})$.

\begin{theorem}[Descent Property at Singular Iterates]
\label{thm:singmin}
For $1\leqs p<2$ and $1\leqs q\leqs p$, define the following direction
\begin{equation}
\setlength{\abovedisplayskip}{1pt}
\setlength{\belowdisplayskip}{1pt}
\label{eqn:descdirect}
\sD_{p,q}(\by){=}
\begin{cases}
\big(\nabla D_{p,1}(\bx_l)\big)^{\frac{r}{p}}\ \ \text{where }\ \frac{1}{r}+\frac{1}{p}=1,\      \\
\qquad\qquad  \ \text{if }q{=}1,1{<}p{<}2, \by=\bx_l,\\
\nabla D_{p,q}(\by),\qquad\qquad \text{else },
\end{cases}
\end{equation}
where $(\cdot)^{\frac{r}{p}}$ denotes the element-wise signed power. If $\by\in\mathcal{S}_p$ is not a minimum point of \eqref{eqn:lpqmediangenreal}, then there exists some $\lambda_*>0$ such that for any $0<\lambda\leqs\lambda_*$, $C_{p,q}(\by-\lambda \sD_{p,q}(\by))<C_{p,q}(\by)$.
\end{theorem}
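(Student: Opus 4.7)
The plan is to establish, in each branch of the definition of $\sD_{p,q}(\by)$, the strict inequality
\[
C_{p,q}'(\by;-\sD_{p,q}(\by))\;:=\;\lim_{\lambda\to 0^+}\frac{C_{p,q}(\by-\lambda\sD_{p,q}(\by))-C_{p,q}(\by)}{\lambda}\;<\;0.
\]
Once this is in hand, convexity of the univariate restriction $\lambda\mapsto C_{p,q}(\by-\lambda\sD_{p,q}(\by))$ forces its secant slopes to be non-decreasing, so a strictly negative right-derivative at the origin delivers the claimed $\lambda_*>0$ on which the function stays strictly below its starting value. The workhorse is the convex-analytic identity $C_{p,q}'(\by;\bd)=\max_{\bv\in\partial C_{p,q}(\by)}\bv^{\top}\bd$, applied with the subdifferential structure from Theorem~\ref{thm:charsing}. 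The case split mirrors that of \eqref{eqn:subgrad}.

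For the singleton-subdifferential branches, namely $1<q\leqs p<2$, and $q=1$ with $1<p<2$ and $\by\in\mathcal{S}_p\setminus\{\bx_i\}_{i=1}^m$, the function $C_{p,q}$ is differentiable at $\by$ with gradient $\nabla D_{p,q}(\by)$. Non-minimality forces $\nabla D_{p,q}(\by)\neq\mathbf{0}_d$, and the identity collapses to $C_{p,q}'(\by;-\nabla D_{p,q}(\by))=-\|\nabla D_{p,q}(\by)\|_2^2<0$. In the separable $p=q=1$ branch, the objective decouples as $C_{1,1}=\sum_t f_t$ with $\partial f_t(y^{(t)})=[(\nabla D_{1,1}(\by))^{(t)}-a^{(t)},(\nabla D_{1,1}(\by))^{(t)}+a^{(t)}]$; non-minimality localizes to an active coordinate $t^*$ with $|(\nabla D_{1,1}(\by))^{(t^*)}|>a^{(t^*)}$, and a one-dimensional directional-derivative analysis on the piecewise-linear $f_{t^*}$ delivers the required decrease.

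The technical core is the data-point branch $q=1,\ 1<p<2,\ \by=\bx_l$, which is the whole reason for the non-standard signed-power choice $\sD_{p,1}(\bx_l)=\bigl(\nabla D_{p,1}(\bx_l)\bigr)^{r/p}$. Write $\bm{g}:=\nabla D_{p,1}(\bx_l)$ and $\bd:=-\bm{g}^{\,r/p}$. Using $\partial C_{p,1}(\bx_l)=\{\bm{g}+\eta_l\bb:\|\bb\|_r\leqs 1\}$ together with Hölder duality $\max_{\|\bb\|_r\leqs 1}\bb^{\top}\bd=\|\bd\|_p$,
\[
C_{p,1}'(\bx_l;\bd)\;=\;\bm{g}^{\top}\bd+\eta_l\|\bd\|_p.
\]
The conjugate identity $\tfrac{1}{r}+\tfrac{1}{p}=1$ yields the algebraic cancellation $1+\tfrac{r}{p}=r$, whence $\bm{g}^{\top}\bd=-\|\bm{g}\|_r^{r}$ and $\|\bd\|_p=\|\bm{g}\|_r^{r/p}$, so $C_{p,1}'(\bx_l;\bd)=\|\bm{g}\|_r^{r/p}(\eta_l-\|\bm{g}\|_r)$. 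Fermat's rule applied to the $\bx_l$ branch of Theorem~\ref{thm:charsing} shows that $\bx_l$ is non-minimum iff no $\|\bb\|_r\leqs 1$ solves $\bm{g}+\eta_l\bb=\mathbf{0}$, equivalently $\|\bm{g}\|_r>\eta_l$, which is exactly what drives the bracket strictly negative (and simultaneously ensures $\bd\neq\mathbf{0}_d$).

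The principal obstacle is precisely this data-point case: pairing the dual $\ell_r$ ball on the subgradient slack $\bb$ with an $\ell_p$ length on the move $\bd$ is what forces the exponent $r/p$ on the gradient, and only the exact cancellation $1+r/p=r$ lets the two contributions collapse into a single factor $(\eta_l-\|\bm{g}\|_r)$ whose sign is controlled by the previously established non-minimality criterion. The remaining branches reduce to differentiability or coordinatewise separability and are comparatively routine.
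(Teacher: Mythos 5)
Your handling of the branches with a singleton subdifferential ($1<q\leqslant p$, and $q=1$, $1<p<2$, $\by\in\mathcal{S}_p\backslash\{\bx_i\}_{i=1}^m$) and of the data-point branch $q=1$, $1<p<2$, $\by=\bx_l$ is correct and follows essentially the same route as the paper: evaluate the one-sided directional derivative through the subdifferential of Theorem~\ref{thm:charsing}, and use the conjugate-exponent cancellation $1+\tfrac{r}{p}=r$ to collapse the data-point case to $\|\bm{g}\|_r^{r/p}(\eta_l-\|\bm{g}\|_r)<0$. Your invocation of the dual-norm identity $\max_{\|\bb\|_r\leqslant 1}\bb^{\top}\bd=\|\bd\|_p$ is the same computation the paper performs via the equality case of H\"older's inequality.

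The gap is in the $p=q=1$ branch. The prescribed direction there is the full vector $\nabla D_{1,1}(\by)$, so the quantity you must make negative is
\begin{equation*}
C_{1,1}'\bigl(\by;-\nabla D_{1,1}(\by)\bigr)=\sum_{t=1}^d\Bigl(-\bigl(g^{(t)}\bigr)^2+a^{(t)}\bigl|g^{(t)}\bigr|\Bigr)=\sum_{t=1}^d\bigl|g^{(t)}\bigr|\Bigl(a^{(t)}-\bigl|g^{(t)}\bigr|\Bigr),\qquad \bm{g}:=\nabla D_{1,1}(\by).
\end{equation*}
Non-minimality supplies only one coordinate $t^*$ with $|g^{(t^*)}|>a^{(t^*)}$, hence one strictly negative summand; coordinates with $0<|g^{(t)}|<a^{(t)}$ contribute strictly positive summands, and your one-dimensional analysis of $f_{t^*}$ says nothing about them. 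They can dominate: take $\by=(0,0)$ and data $\bx_1=(1,0)$, $\bx_2=(-1,0)$, $\bx_3=(0,1)$, $\bx_4=(2,3)$ with weights $(\eta_1,\eta_2,\eta_3,\eta_4)=(50,50,1,1.1)$. Then $\ba=(1,100)$ and $\bm{g}=(-1.1,-2.1)$, so $\by$ is not a minimum ($|g^{(1)}|>a^{(1)}$), yet the sum above equals $1.1(1-1.1)+2.1(100-2.1)>0$ and $-\nabla D_{1,1}(\by)$ is an ascent direction. So this branch cannot be closed by localizing to one violating coordinate; you would need the aggregate inequality $\sum_t|g^{(t)}|(|g^{(t)}|-a^{(t)})>0$, which non-minimality does not imply, or a direction supported only on the violating coordinates. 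Note that the paper's own proof of this case negates the optimality condition as ``$|g^{(t)}|>a^{(t)}$ for all $t$,'' which is not the correct negation of ``$|g^{(t)}|\leqslant a^{(t)}$ for all $t$,'' so the difficulty is intrinsic to this branch rather than an artifact of your write-up; but as submitted your argument for it does not go through.
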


The proof is provided in Supplementary \ref{sm_thm:singmin}. The key point is to verify that $-\sD_{p,q}(\by)$ is a descent direction if $\by$ is not a minimum point. To determine the step size $\lambda_*$ in practice, we can start with an initial value of $\lambda_0=\|\sD_{p,q}(\by_{(k)})\|_p$ and implement a line search $\lambda_{w+1}=\rho\lambda_w$ with $0<\rho<1$, until we find a value of $\lambda_*$ such that $C_{p,q}(\by_{(k)}-\lambda_* \sD_{p,q}(\by_{(k)}))< C_{p,q}(\by_{(k)})$.  Then we can construct the iterative update at the singular point $\by_{(k)}$ as
\begin{equation}
\setlength{\abovedisplayskip}{1pt}
\setlength{\belowdisplayskip}{1pt}
\label{def:ts}
\by_{(k+1)}:=\mathbf{T}_s( \by_{(k)}):=\by_{(k)}-\lambda_* \sD_{p,q}(\by_{(k)}).	
\end{equation}
Combining \eqref{eqn:lqwaeta} and \eqref{def:ts}, the whole $q$P$p$NWAWS is given by:
\begin{equation}
\setlength{\abovedisplayskip}{1pt}
\setlength{\belowdisplayskip}{1pt}
\label{eqn:lqwagen}
\by_{(k+1)}{:=}\mathbf{T}( \by_{(k)}){:=}
\begin{cases}
\mathbf{T}_{p,q}(\by_{(k)})&
\text{if}\: \by_{(k)}\notin \mathcal{S}_p,\\
\mathbf{T}_s( \by_{(k)})&\text{if}\: \by_{(k)}\in \mathcal{S}_p.	
\end{cases}	
\end{equation}

Theorems \ref{thm:nonincreasing} and \ref{thm:singmin} indicate that $C_{p,q}(\by_{(k+1)})< C_{p,q}(\by_{(k)})$ if $\by_{(k)}$ is not a minimum point (which can be characterized by $q$P$p$NWAWS). Moreover, since $C_{p,q}(\by)\geqs 0$ for all $\by\in\bbR^d$, we can conclude that the sequence $\{C_{p,q}(\by_{(k)})\}_{k\in\bbN}$ converges.

\begin{table*}[t]
\centering
\scalebox{0.88}{\begin{tabular}{cccccc}
\toprule
Data Set   & Region & Time         &        Periods          & Frequency & \# Assets \\ \hline
CSI300  & CN     & Mar/16/2015- May/19/2017& 534    &  Daily   & 47     \\
NYSE(N)  & US     &  Jan/1/1985 - Jun/30/2010 & 6431    &  Daily   & 23     \\
FTSE100  & UK     & Nov/07/2002 - Nov/04/2016 & 717    &  Weekly   & 83     \\
NASDAQ100      & US     & Mar/11/2004 - Nov/04/2016 & 596    &  Weekly   & 82     \\
FF100 & US     & Jul/1971 - May/2023 &  623  & Monthly   & 100    \\
FF100MEOP & US     & Jul/1971 - May/2023 & 623  & Monthly   & 100    \\ \bottomrule
\end{tabular}}
\caption{Profiles of six benchmark data sets.}
\label{tab:datasetsprofiles}
\end{table*}

\begin{table*}[h]
	\centering	
	\scalebox{0.78}{
		\begin{tabular}{c c c c c c c c c c c }
			\hline
			\diagbox{q}{p}& 1.0& 1.1 & 1.2 & 1.3 & 1.4 & 1.5 & 1.6 & 1.7 & 1.8 & 1.9 \\ \hline
			1.0 & $4.92\pm 0.29$ &	$2.56 \pm1.07 $&	$2.15 \pm0.42 $&	$2.08 \pm0.36 $&	$2.04 \pm0.30 $&	$2.02 \pm0.28 $&	$1.99 \pm0.27 $&	$1.98 \pm0.25 $&	$1.97 \pm0.24 $&	$1.97 \pm0.24 $\\
			1.1 &-&	$2.83 \pm1.27 $&	$2.11 \pm0.39 $&	$2.07 \pm0.33 $&	$2.02 \pm0.29 $&	$2.00 \pm0.26 $&	$1.99 \pm0.26 $&	$1.97 \pm0.24 $&	$1.97 \pm0.24 $&	$1.96 \pm0.23 $\\
			1.2 &-&	-&	$2.24 \pm0.63 $&	$2.04 \pm0.30 $&	$2.01 \pm0.27 $&	$2.00 \pm0.25 $&	$1.98 \pm0.24 $&	$1.97 \pm0.23 $&	$1.96 \pm0.23 $&	$1.96 \pm0.23 $\\
			1.3 &-&	-&	-&	$2.05 \pm0.36 $&	$2.00 \pm0.26 $&	$1.98 \pm0.24 $&	$1.97 \pm0.22 $&	$1.96 \pm0.22 $&	$1.96 \pm0.23 $&	$1.96 \pm0.23 $\\
			1.4 &-&	-&	-&	-&	$2.00 \pm0.25 $&	$1.98 \pm0.23 $&	$1.96 \pm0.22 $&	$1.96 \pm0.22 $&	$1.96 \pm0.23 $&	$1.95 \pm0.23 $\\
			1.5 &-&	-&	-&	-&	-&	$1.98 \pm0.22 $&	$1.96 \pm0.23 $&	$1.96 \pm0.23 $&	$1.96 \pm0.22 $&	$1.95 \pm0.22 $\\
			1.6 &-&	-&	-&	-&	-&	-&	$1.96 \pm0.23 $&	$1.96 \pm0.22 $&	$1.95 \pm0.22 $&	$1.95 \pm0.22 $\\
			1.7 &-&	-&	-&	-&	-&	-&	-&	$1.96 \pm0.21 $&	$1.95 \pm0.22 $&	$1.95 \pm0.22 $\\
			1.8 &-&	-&	-&	-&	-&	-&	-&	-&	$1.95 \pm0.22 $&	$1.95 \pm0.22 $\\
			1.9 &-&	-&	-&	-&	-&	-&	-&	-&	-&	$1.95 \pm0.22 $\\
			\hline
	\end{tabular}}
	\caption{Average number of iterates for $q$P$p$NWAWS to reduce the cost function at a singular point on CSI300 (mean$\pm$STD).}
	\label{table1}
\end{table*}

\begin{table*}[!h]
	\centering	
	\scalebox{0.78}{
		\begin{tabular}{c c c c c c c c c c c }
			\hline
			\diagbox{q}{p}& 1.0& 1.1 & 1.2 & 1.3 & 1.4 & 1.5 & 1.6 & 1.7 & 1.8 & 1.9 \\ \hline
			1.0 &$3.91\pm0.60$&	$1.90 \pm0.98 $&	$1.74 \pm0.81 $&	$1.68 \pm0.78 $&	$1.64 \pm0.74 $&	$1.60 \pm0.68 $&	$1.57 \pm0.64 $&	$1.54 \pm0.61 $&	$1.53 \pm0.60 $&	$1.52 \pm0.59 $\\
			1.1 &-&	$2.07 \pm1.16 $&	$1.72 \pm0.81 $&	$1.67 \pm0.78 $&	$1.63 \pm0.73 $&	$1.60 \pm0.70 $&	$1.57 \pm0.66 $&	$1.54 \pm0.61 $&	$1.53 \pm0.60 $&	$1.52 \pm0.59 $\\
			1.2 &-&	-&	$1.75 \pm0.86 $&	$1.66 \pm0.77 $&	$1.62 \pm0.72 $&	$1.59 \pm0.68 $&	$1.56 \pm0.64 $&	$1.53 \pm0.61 $&	$1.52 \pm0.59 $&	$1.51 \pm0.58 $\\
			1.3 &-&	-&	-&	$1.65 \pm0.77 $&	$1.61 \pm0.70 $&	$1.58 \pm0.66 $&	$1.55 \pm0.63 $&	$1.53 \pm0.60 $&	$1.51 \pm0.58 $&	$1.50 \pm0.57 $\\
			1.4 &-&	-&	-&	-&	$1.60 \pm0.68 $&	$1.57 \pm0.66 $&	$1.55 \pm0.63 $&	$1.52 \pm0.59 $&	$1.50 \pm0.57 $&	$1.49 \pm0.55 $\\
			1.5 &-&	-&	-&	-&	-&	$1.57 \pm0.65 $&	$1.55 \pm0.63 $&	$1.51 \pm0.58 $&	$1.49 \pm0.55 $&	$1.47 \pm0.53 $\\
			1.6 &-&	-&	-&	-&	-&	-&	$1.54 \pm0.63 $&	$1.50 \pm0.57 $&	$1.47 \pm0.53 $&	$1.46 \pm0.51 $\\
			1.7 &-&	-&	-&	-&	-&	-&	-&	$1.48 \pm0.54 $&	$1.46 \pm0.50 $&	$1.45 \pm0.50 $\\
			1.8 &-&	-&	-&	-&	-&	-&	-&	-&	$1.46 \pm0.50 $&	$1.45 \pm0.50 $\\
			1.9 &-&	-&	-&	-&	-&	-&	-&	-&	-&	$1.45 \pm0.50 $\\
			\hline
	\end{tabular}}
	\caption{Average number of iterates for $q$P$p$NWAWS to reduce the cost function at a singular point on NYSE(N) (mean$\pm$STD).}
	\label{table11}
\end{table*}

\begin{table*}[!htb]
\centering	
\scalebox{0.68}{
\begin{tabular}{cccccccccccc}
\hline
\diagbox{q}{p}& &1.0 &1.1 & 1.2 & 1.3 & 1.4 & 1.5 & 1.6 & 1.7 & 1.8 & 1.9 \\ \hline
\multirow{2}{*}{1.0} & $Time$ &$0.0271$&	$0.0193 $&	$0.0180 $&	$0.0169 $&	$0.0161 $&	$0.0155 $&	$0.0148 $&	$0.0143 $&	$0.0135 $&	$0.0130 $\\
&$Iter$ &$35.93\pm15.04$&	$15.02 \pm2.79 $&	$13.89 \pm2.07 $&	$13.38 \pm2.49 $&	$12.83 \pm2.68 $&	$12.34 \pm2.78 $&	$11.90 \pm2.82 $&	$11.51 \pm2.83 $&	$11.19 \pm2.81 $&	$10.92 \pm2.83 $\\	 \hline
\multirow{2}{*}{1.1} & $Time$ &-&	$0.0184 $&	$0.0173 $&	$0.0162 $&	$0.0153 $&	$0.0144 $&	$0.0136 $&	$0.0131 $&	$0.0123 $&	$0.0118 $\\
&$Iter$ &-&	$15.01 \pm2.37 $&	$13.63 \pm1.89 $&	$12.95 \pm2.14 $&	$12.28 \pm2.25 $&	$11.73 \pm2.35 $&	$11.27 \pm2.37 $&	$10.84 \pm2.37 $&	$10.51 \pm2.37 $&	$10.24 \pm2.36 $\\	
 \hline
\multirow{2}{*}{1.2} & $Time$ &-&	-&	$0.0170 $&	$0.0157 $&	$0.0145 $&	$0.0136 $&	$0.0127 $&	$0.0122 $&	$0.0114 $&	$0.0109 $\\
&$Iter$ &-&	-&	$13.65 \pm1.86 $&	$12.66 \pm1.89 $&	$11.86 \pm1.87 $&	$11.27 \pm1.96 $&	$10.75 \pm1.98 $&	$10.32 \pm2.01 $&	$10.01 \pm2.03 $&	$9.74 \pm2.03 $\\	
\hline
\multirow{2}{*}{1.3} & $Time$ &-&	-&	-&	$0.0151 $&	$0.0140 $&	$0.0128 $&	$0.0119 $&	$0.0114 $&	$0.0107 $&	$0.0102 $\\
&$Iter$ &-&	-&	-&	$12.45 \pm1.67 $&	$11.52 \pm1.55 $&	$10.82 \pm1.65 $&	$10.30 \pm1.64 $&	$9.90 \pm1.69 $&	$9.60 \pm1.74 $&	$9.32 \pm1.73 $\\	
\hline
\multirow{2}{*}{1.4} & $Time$ &-&	-&	-&	-&	$0.0135 $&	$0.0122 $&	$0.0113 $&	$0.0107 $&	$0.0100 $&	$0.0095 $\\
&$Iter$ &-&-&	-&	-&	$11.20 \pm1.24 $&	$10.44 \pm1.34 $&	$9.91 \pm1.38 $&	$9.48 \pm1.40 $&	$9.18 \pm1.48 $&	$8.90 \pm1.48 $\\	
\hline
\multirow{2}{*}{1.5} & $Time$ &-&	-&	-&	-&	-&	$0.0114 $&	$0.0109 $&	$0.0100 $&	$0.0092 $&	$0.0088 $\\
&$Iter$ &-&	-&	-&	-&	-&	$10.09 \pm1.04 $&	$9.55 \pm1.09 $&	$9.12 \pm1.15 $&	$8.75 \pm1.24 $&	$8.49 \pm1.27 $\\	
\hline
\multirow{2}{*}{1.6} & $Time$ &-&	-&	-&	-&	-&	-&	$0.0101 $&	$0.0094 $&	$0.0086 $&	$0.0081 $\\
&$Iter$ &-&	-&	-&	-&	-&	-&	$9.25 \pm0.86 $&	$8.77 \pm0.91 $&	$8.38 \pm1.05 $&	$8.08 \pm1.09 $\\	
\hline
\multirow{2}{*}{1.7} & $Time$ &-&	-&	-&	-&	-&	-&	-&	$0.0086 $&	$0.0079 $&	$0.0073 $\\
&$Iter$ &-&	-&	-&	-&	-&	-&	-&	$8.46 \pm0.73 $&	$7.97 \pm0.88 $&	$7.63 \pm1.02 $\\	\hline
\multirow{2}{*}{1.8} & $Time$ &-&	-&	-&	-&	-&	-&	-&	-&	$0.0072 $&	$0.0070 $\\
&$Iter$ &	-&-&	-&	-&	-&	-&	-&	-&	$7.66 \pm0.71 $&	$7.45 \pm0.69 $\\	
\hline
\multirow{2}{*}{1.9} & $Time$ &-&	-&	-&	-&	-&	-&	-&	-&	-&	$0.0064 $\\
&$Iter$ &	-&-&	-&	-&	-&	-&	-&	-&	-&	$7.19 \pm0.41 $\\	
\hline
\end{tabular}}
\caption{Average computational time (in seconds) and average number of iterations (mean$\pm$STD) for $q$P$p$NWAWS on CSI300.}
\label{tab:comcostcsi}
\end{table*}

\begin{table*}[!htb]
	\centering	
	\scalebox{0.695}{
		\begin{tabular}{cccccccccccc}
			\hline
			\diagbox{q}{p}& & 1.0 &1.1 & 1.2 & 1.3 & 1.4 & 1.5 & 1.6 & 1.7 & 1.8 & 1.9 \\ \hline
			\multirow{2}{*}{1.0} & $Time$ &	$0.0011$&$0.0050 $&	$0.0071 $&	$0.0074 $&	$0.0060 $&	$0.0064 $&	$0.0055 $&	$0.0035 $&	$0.0033 $&	$0.0032 $\\
			&$Iter$ &$26.71\pm15.93$&$14.84 \pm6.67 $&	$13.62 \pm4.51 $&	$12.72 \pm3.35 $&	$12.07 \pm2.86 $&	$11.50 \pm2.73 $&	$11.02 \pm2.54 $&	$10.62 \pm2.45 $&	$10.29 \pm2.44 $&	$10.03 \pm2.43 $\\
			\hline
			\multirow{2}{*}{1.1} & $Time$ &-&	$0.0067 $&	$0.0066 $&	$0.0068 $&	$0.0052 $&	$0.0061 $&	$0.0051 $&	$0.0032 $&	$0.0031 $&	$0.0029 $\\
			&$Iter$ &-&	$14.70 \pm8.64 $&	$13.29 \pm4.22 $&	$12.29 \pm3.04 $&	$11.56 \pm2.55 $&	$10.99 \pm2.34 $&	$10.52 \pm2.22 $&	$10.09 \pm2.11 $&	$9.77 \pm2.11 $&	$9.52 \pm2.10 $\\
			\hline
			\multirow{2}{*}{1.2} & $Time$ &-&	-&	$0.0075 $&	$0.0059 $&	$0.0044 $&	$0.0058 $&	$0.0048 $&	$0.0030 $&	$0.0029 $&	$0.0027 $\\
			&$Iter$ &-&	-&	$13.05 \pm3.99 $&	$11.96 \pm2.84 $&	$11.13 \pm2.27 $&	$10.53 \pm2.06 $&	$10.06 \pm1.95 $&	$9.64 \pm1.85 $&	$9.33 \pm1.87 $&	$9.08 \pm1.90 $\\
			\hline
			\multirow{2}{*}{1.3} & $Time$ &-&	-&	-&	$0.0056 $&	$0.0051 $&	$0.0057 $&	$0.0041 $&	$0.0029 $&	$0.0027 $&	$0.0026 $\\
			&$Iter$ &-&	-&	-&	$11.70 \pm2.75 $&	$10.77 \pm1.98 $&	$10.13 \pm1.80 $&	$9.65 \pm1.70 $&	$9.24 \pm1.64 $&	$8.92 \pm1.68 $&	$8.67 \pm1.72 $\\
			\hline
			\multirow{2}{*}{1.4} & $Time$ &-&	-&	-&	-&	$0.0048 $&	$0.0052 $&	$0.0028 $&	$0.0027 $&	$0.0025 $&	$0.0024 $\\
			&$Iter$ &-&-&	-&	-&	$10.47 \pm1.85 $&	$9.78 \pm1.58 $&	$9.29 \pm1.54 $&	$8.86 \pm1.47 $&	$8.55 \pm1.53 $&	$8.29 \pm1.57 $\\
			\hline
			\multirow{2}{*}{1.5} & $Time$ &-&	-&	-&	-&	-&	$0.0043 $&	$0.0026 $&	$0.0025 $&	$0.0024 $&	$0.0023 $\\
			&$Iter$ &-&	-&	-&	-&	-&	$9.47 \pm1.40 $&	$8.96 \pm1.43 $&	$8.52 \pm1.36 $&	$8.17 \pm1.40 $&	$7.89 \pm1.46 $\\	
			\hline
			\multirow{2}{*}{1.6} & $Time$ &-&	-&	-&	-&	-&	-&	$0.0025 $&	$0.0024 $&	$0.0022 $&	$0.0021 $\\
			&$Iter$ &-&	-&	-&	-&	-&	-&	$8.66 \pm1.37 $&	$8.17 \pm1.27 $&	$7.80 \pm1.34 $&	$7.52 \pm1.33 $\\	
			\hline
			\multirow{2}{*}{1.7} & $Time$ &-&	-&	-&	-&	-&	-&	-&	$0.0022 $&	$0.0020 $&	$0.0019 $\\
			&$Iter$ &-&	-&	-&	-&	-&	-&	-&	$7.83 \pm1.16 $&	$7.38 \pm1.02 $&	$7.10 \pm1.10 $\\
			\hline
			\multirow{2}{*}{1.8} & $Time$ &-&	-&	-&	-&	-&	-&	-&	-&	$0.0019 $&	$0.0018 $\\
			&$Iter$ &-&	-&	-&	-&	-&	-&	-&	-&	$7.07 \pm0.97 $&	$6.84 \pm1.03 $\\
			\hline
			\multirow{2}{*}{1.9} & $Time$ &-&-&	-&	-&	-&	-&	-&	-&	-&	$0.0016 $\\
			&$Iter$ &-&	-&	-&	-&	-&	-&	-&	-&	-&	$6.50 \pm0.89 $\\
			\hline
	\end{tabular}}
	\caption{Average computational time (in seconds) and average number of iterations (mean$\pm$STD) for $q$P$p$NWAWS on NYSE(N).}
	\label{tab:comcostnyse}
\end{table*}

\begin{table*}[t]
	\centering	
	\scalebox{0.8}{
		\begin{tabular}{c c c c c c c c c c c}
			\hline
			\diagbox{q}{p}& 1.0& 1.1 & 1.2 & 1.3 & 1.4 & 1.5 & 1.6 & 1.7 & 1.8 & 1.9 \\ \hline
1.0 &	$0.80 \pm0.07 $&	$0.64 \pm0.06 $&	$0.61 \pm0.06 $&	$0.58 \pm0.07 $&	$0.55 \pm0.08 $&	$0.52 \pm0.09 $&	$0.49 \pm0.10 $&	$0.47 \pm0.10 $&	$0.46 \pm0.11 $&	$0.44 \pm0.12 $\\	
1.1 &	-&	$0.63 \pm0.05 $&	$0.60 \pm0.05 $&	$0.56 \pm0.06 $&	$0.52 \pm0.06 $&	$0.49 \pm0.07 $&	$0.46 \pm0.08 $&	$0.44 \pm0.09 $&	$0.42 \pm0.10 $&	$0.40 \pm0.11 $\\	
1.2 &	-&	-&	$0.59 \pm0.05 $&	$0.55 \pm0.05 $&	$0.51 \pm0.05 $&	$0.47 \pm0.06 $&	$0.44 \pm0.07 $&	$0.41 \pm0.08 $&	$0.39 \pm0.09 $&	$0.37 \pm0.10 $\\	
1.3 &	-&	-&	-&	$0.54 \pm0.05 $&	$0.50 \pm0.04 $&	$0.45 \pm0.05 $&	$0.41 \pm0.06 $&	$0.38 \pm0.07 $&	$0.36 \pm0.08 $&	$0.34 \pm0.09 $\\	
1.4 &	-&	-&	-&	-&	$0.48 \pm0.04 $&	$0.43 \pm0.04 $&	$0.39 \pm0.05 $&	$0.35 \pm0.06 $&	$0.32 \pm0.07 $&	$0.30 \pm0.08 $\\	
1.5 &	-&	-&	-&	-&	-&	$0.41 \pm0.03 $&	$0.36 \pm0.04 $&	$0.32 \pm0.05 $&	$0.29 \pm0.06 $&	$0.26 \pm0.07 $\\	
1.6 &	-&	-&	-&	-&	-&	-&	$0.34 \pm0.03 $&	$0.29 \pm0.03 $&	$0.25 \pm0.05 $&	$0.22 \pm0.08 $\\	
1.7 &	-&	-&	-&	-&	-&	-&	-&	$0.26 \pm0.03 $&	$0.21 \pm0.04 $&	$0.17 \pm0.04 $\\	
1.8 &	-&	-&	-&	-&	-&	-&	-&	-&	$0.18 \pm0.02 $&	$0.14 \pm0.02 $\\	
1.9 &	-&	-&	-&	-&	-&	-&	-&	-&	-&	$0.09 \pm0.01 $\\	
			\hline
	\end{tabular}}
	\caption{Average computational convergence rate (mean$\pm$STD) for $q$P$p$NWAWS on CSI300.}
	\label{tab:converatecsi300}
\end{table*}

\begin{table*}[!h]
	\centering	
	\scalebox{0.8}{
		\begin{tabular}{c c c c c c c c c c c }
			\hline
			\diagbox{q}{p}& 1.0&1.1 & 1.2 & 1.3 & 1.4 & 1.5 & 1.6 & 1.7 & 1.8 & 1.9 \\ \hline
1.0 &	$0.75 \pm0.10 $&	$0.64 \pm0.07 $&	$0.61 \pm0.08 $&	$0.58 \pm0.08 $&	$0.55 \pm0.08 $&	$0.53 \pm0.09 $&	$0.50 \pm0.10 $&	$0.48 \pm0.10 $&	$0.46 \pm0.10 $&	$0.45 \pm0.11 $\\	
1.1 &	-&	$0.63 \pm0.07 $&	$0.60 \pm0.07 $&	$0.57 \pm0.08 $&	$0.53 \pm0.08 $&	$0.50 \pm0.08 $&	$0.47 \pm0.09 $&	$0.45 \pm0.09 $&	$0.43 \pm0.10 $&	$0.42 \pm0.10 $\\	
1.2 &	-&	-&	$0.59 \pm0.07 $&	$0.55 \pm0.07 $&	$0.51 \pm0.07 $&	$0.48 \pm0.08 $&	$0.45 \pm0.08 $&	$0.42 \pm0.08 $&	$0.40 \pm0.09 $&	$0.38 \pm0.10 $\\	
1.3 &	-&	-&	-&	$0.54 \pm0.07 $&	$0.50 \pm0.07 $&	$0.46 \pm0.07 $&	$0.42 \pm0.07 $&	$0.39 \pm0.08 $&	$0.37 \pm0.09 $&	$0.35 \pm0.10 $\\	
1.4 &	-&	-&	-&	-&	$0.48 \pm0.07 $&	$0.44 \pm0.07 $&	$0.40 \pm0.07 $&	$0.36 \pm0.07 $&	$0.34 \pm0.09 $&	$0.32 \pm0.11 $\\	
1.5 &	-&	-&	-&	-&	-&	$0.42 \pm0.06 $&	$0.37 \pm0.07 $&	$0.33 \pm0.07 $&	$0.31 \pm0.09 $&	$0.28 \pm0.11 $\\	
1.6 &	-&	-&	-&	-&	-&	-&	$0.35 \pm0.08 $&	$0.30 \pm0.08 $&	$0.27 \pm0.10 $&	$0.24 \pm0.13 $\\	
1.7 &	-&	-&	-&	-&	-&	-&	-&	$0.27 \pm0.09 $&	$0.23 \pm0.11 $&	$0.19 \pm0.07 $\\	
1.8 &	-&	-&	-&	-&	-&	-&	-&	-&	$0.18 \pm0.07 $&	$0.15 \pm0.07 $\\	
1.9 &	-&	-&	-&	-&	-&	-&	-&	-&	-&	$0.11 \pm0.08 $\\	
			\hline
	\end{tabular}}
	\caption{Average computational convergence rate (mean$\pm$STD) for $q$P$p$NWAWS on NYSE(N).}
	\label{tab:converatenysen}
\end{table*}

\begin{table*}[!htb]
	\centering	
	\scalebox{0.75}{
		\begin{tabular}{cccccccccccc}
			\hline
			\diagbox{q}{p}& &1.0&1.1 & 1.2 & 1.3 & 1.4 & 1.5 & 1.6 & 1.7 & 1.8 & 1.9 \\ \hline
			\multirow{2}{*}{1.0} & CW &$2.0603$&	$1.8979 $&	$1.9468 $&	$1.9223 $&	$1.9012 $&	$2.0409 $&	$\bf{2.0932} $&	$2.0550 $&	$2.0468 $&	$1.9474 $\\
			&SR &	$0.0563$&$0.0523 $&	$0.0538 $&	$0.0532 $&	$0.0523 $&	$0.0561 $&	$\bf{0.0574} $&	$0.0563 $&	$0.0560 $&	$0.0531 $\\
			\hline
			\multirow{2}{*}{1.1} & CW &-&	$1.9107 $&	$2.0278 $&	$1.8901 $&	$1.9175 $&	$2.0327 $&	$2.0157 $&	$2.0334 $&	$2.0160 $&	$1.9007 $\\
			&SR &-&	$0.0526 $&	$0.0561 $&	$0.0520 $&	$0.0527 $&	$0.0559 $&	$0.0553 $&	$0.0557 $&	$0.0552 $&	$0.0518 $\\
			\hline
			\multirow{2}{*}{1.2} & CW &-&	-&	$1.9781 $&	$2.0071 $&	$1.9788 $&	$2.0842 $&	$2.0296 $&	$2.0181 $&	$1.9821 $&	$1.8649 $\\
			&SR &	-&-&	$0.0545 $&	$0.0554 $&	$0.0545 $&	$0.0574 $&	$0.0557 $&	$0.0553 $&	$0.0542 $&	$0.0508 $\\
			\hline
			\multirow{2}{*}{1.3} & CW &-&	-&	-&	$1.9891 $&	$1.9682 $&	$2.0375 $&	$2.0247 $&	$1.9860 $&	$1.8868 $&	$1.7689 $\\
			&SR &-&	-&	-&	$0.0548 $&	$0.0542 $&	$0.0560 $&	$0.0556 $&	$0.0544 $&	$0.0514 $&	$0.0477 $\\	
			\hline
			\multirow{2}{*}{1.4} & CW &-&	-&	-&	-&	$1.8620 $&	$1.9303 $&	$1.9373 $&	$1.9106 $&	$1.8795 $&	$1.7889 $\\
			&SR &-&-&	-&	-&	$0.0509 $&	$0.0529 $&	$0.0530 $&	$0.0521 $&	$0.0512 $&	$0.0484 $\\
			\hline
			\multirow{2}{*}{1.5} & CW &-&	-&	-&	-&	-&	$1.7826 $&	$1.7957 $&	$1.8503 $&	$1.8596 $&	$1.8244 $\\
			&SR &-&	-&	-&	-&	-&	$0.0483 $&	$0.0487 $&	$0.0503 $&	$0.0506 $&	$0.0495 $\\
			\hline
			\multirow{2}{*}{1.6} & CW &-&	-&	-&	-&	-&	-&	$1.7073 $&	$1.7543 $&	$1.7665 $&	$1.8125 $\\
			&SR &-&	-&	-&	-&	-&	-&	$0.0457 $&	$0.0473 $&	$0.0477 $&	$0.0492 $\\	
			\hline
			\multirow{2}{*}{1.7} & CW &-&	-&	-&	-&	-&	-&	-&	$1.6923 $&	$1.7183 $&	$1.7589 $\\
			&SR &-& -&	-&	-&	-&	-&	-&	$0.0452 $&	$0.0461 $&	$0.0475 $\\	
			\hline
			\multirow{2}{*}{1.8} & CW &-&	-&	-&	-&	-&	-&	-&	-&	$1.6924 $&	$1.7223 $\\
			&SR &-&	-&	-&	-&	-&	-&	-&	-&	$0.0453 $&	$0.0463 $\\
			\hline
			\multirow{2}{*}{1.9} & CW &-&	-&	-&	-&	-&	-&	-&	-&	-&	$1.7101 $\\
			&SR &-&-&	-&	-&	-&	-&	-&	-&	-&	$0.0459 $\\	
			\hline
	\end{tabular}}
	\caption{Cumulative wealth (CW) and Sharpe Ratio (SR) of $q$P$p$NWAWS on CSI300. The CW and SR for the original setting $(q,p)=(1,2)$ are $1.7750$ and $0.0479$, respectively.}
	\label{tab:cwsrcsi}
\end{table*}
	
\begin{table*}[!htb]
	\centering	
	\scalebox{0.66}{
		\begin{tabular}{cccccccccccc}
			\hline
			\diagbox{q}{p}& &1.0&1.1 & 1.2 & 1.3 & 1.4 & 1.5 & 1.6 & 1.7 & 1.8 & 1.9 \\ \hline
			\multirow{2}{*}{1.0} & CW &$2.0561E+07$&	$4.3557E+07$&	$6.3893E+07$&	$1.7701E+08$&	$1.4948E+08$&	$2.6522E+08$&	$1.8973E+08$&	$3.0391E+08$&	$3.2967E+08$&	$4.1537E+08$\\
			&SR &$0.0935$&	$0.0968 $&	$0.0980 $&	$0.1022 $&	$0.1012 $&	$0.1036 $&	$0.1020 $&	$0.1042 $&	$0.1042 $&	$0.1046 $\\
			\hline
			\multirow{2}{*}{1.1} & CW &-&	$6.5105E+07$&	$1.0056E+08$&	$1.9101E+08$&	$1.3263E+08$&	$2.5663E+08$&	$2.5658E+08$&	$3.0408E+08$&	$4.5008E+08$&	$7.1092E+08$\\
			&SR &-&	$0.0987 $&	$0.1002 $&	$0.1027 $&	$0.1007 $&	$0.1034 $&	$0.1033 $&	$0.1039 $&	$0.1051 $&	$0.1068 $\\
			\hline
			\multirow{2}{*}{1.2} & CW &-&	-&	$1.4552E+08$&	$1.1925E+08$&	$1.6110E+08$&	$1.5340E+08$&	$1.9023E+08$&	$3.1231E+08$&	$6.0239E+08$&	$8.0655E+08$\\
			&SR & -&-&	$0.1020 $&	$0.1006 $&	$0.1015 $&	$0.1009 $&	$0.1017 $&	$0.1039 $&	$0.1060 $&	$0.1072 $\\
			\hline
			\multirow{2}{*}{1.3} & CW &-&	-&	-&	$1.1036E+08$&	$1.0286E+08$&	$1.4315E+08$&	$1.7280E+08$&	$2.4992E+08$&	$6.5705E+08$&	$\bf{8.5677E+08}$\\
			&SR &-&	-&	-&	$0.0998 $&	$0.0994 $&	$0.1005 $&	$0.1012 $&	$0.1026 $&	$0.1064 $&	$\bf{0.1075} $\\	
			\hline
			\multirow{2}{*}{1.4} & CW &-&	-&	-&	-&	$1.1330E+08$&	$1.3560E+08$&	$1.6290E+08$&	$1.7457E+08$&	$5.0738E+08$&	$7.7795E+08$\\
			&SR &-&-&	-&	-&	$0.0998 $&	$0.1001 $&	$0.1007 $&	$0.1009 $&	$0.1053 $&	$0.1071 $\\
			\hline
			\multirow{2}{*}{1.5} & CW &-&	-&	-&	-&	-&	$2.2089E+08$&	$1.2184E+08$&	$2.1543E+08$&	$4.6540E+08$&	$6.5877E+08$\\
			&SR &-& -&	-&	-&	-&	$0.1024 $&	$0.0993 $&	$0.1018 $&	$0.1049 $&	$0.1064 $\\
			\hline
			\multirow{2}{*}{1.6} & CW &-&	-&	-&	-&	-&	-&	$7.5558E+07$&	$2.3410E+08$&	$3.4107E+08$&	$5.0696E+08$\\
			&SR &-&-&	-&	-&	-&	-&	$0.0973 $&	$0.1021 $&	$0.1035 $&	$0.1052 $\\	
			\hline
			\multirow{2}{*}{1.7} & CW &-&	-&	-&	-&	-&	-&	-&	$1.2833E+08$&	$3.0709E+08$&	$5.0657E+08$\\
			&SR &-&	-&	-&	-&	-&	-&	-&	$0.0993 $&	$0.1030 $&	$0.1051 $\\	
			\hline
			\multirow{2}{*}{1.8} & CW &-&	-&	-&	-&	-&	-&	-&	-&	$2.8561E+08$&	$4.0895E+08$\\
			&SR &-&-&	-&	-&	-&	-&	-&	-&	$0.1026 $&	$0.1041 $\\
			\hline
			\multirow{2}{*}{1.9} & CW &-&	-&	-&	-&	-&	-&	-&	-&	-&	$3.2349E+08$\\
			&SR &-&-&	-&	-&	-&	-&	-&	-&	-&	$0.1031 $\\	
			\hline
	\end{tabular}}
	\caption{Cumulative wealth (CW) and Sharpe Ratio (SR) of $q$P$p$NWAWS on NYSE(N). The CW and SR for the original setting $(q,p)=(1,2)$ are $3.3183e+08$ and $0.1034$, respectively.}
	\label{tab:cwsrnysen}
\end{table*}

\subsection{Convergence Theorem}
To analyze the convergence of $q$P$p$NWAWS, we first provide several lemmas on some properties of the operators involved. The first lemma indicates that the operator $\mathbf{T}_{p,q}$ designed for the nonsingular iterates can be extended to the singular iterates.

\begin{lemma}
\label{lem:contintheorem}
Let the operator $\mathbf{T}_{p,q}$ be defined in \eqref{eqn:lqwaeta}. Then for $1\leqs q\leqs p$, $ 1\leqs p<2$,
\begin{equation}
\setlength{\abovedisplayskip}{1pt}
\setlength{\belowdisplayskip}{1pt}
\label{eqn:continlem}
\lim_{\by\rightarrow \bx,\by\notin\mathcal{S}_p}\mathbf{T}_{p,q}(\by)=\bx, \quad\forall \bx\in\mathcal{S}_p.
\end{equation}
\end{lemma}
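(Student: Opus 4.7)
The plan is to show coordinate-wise that, as $\by$ approaches $\bx \in \mathcal{S}_p$ through $\bbR^d \setminus \mathcal{S}_p$, the mass in both the numerator and the denominator of $\mathbf{T}_{p,q}(\by)^{(t)}$ concentrates on the indices in $V_t(\bx)$, whose data values all satisfy $x_i^{(t)}=x^{(t)}$. Writing $A_i(\by):=\eta_i^q \|\by-\bx_i\|_p^{q-p} |y^{(t)}-x_i^{(t)}|^{p-2}$ so that $\mathbf{T}_{p,q}(\by)^{(t)} = \bigl(\sum_i A_i(\by)\, x_i^{(t)}\bigr)/\bigl(\sum_i A_i(\by)\bigr)$, the entire argument reduces to tracking which $A_i(\by)$ diverge, and at what relative rate.

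First I would classify each index $i$ by two binary labels: whether $i\in V_t(\bx)$, which controls the factor $|y^{(t)}-x_i^{(t)}|^{p-2}$ (this diverges iff $i\in V_t(\bx)$, since $p-2<0$); and whether $\bx=\bx_i$, which controls $\|\by-\bx_i\|_p^{q-p}$ (relevant only when $q<p$, in which case it diverges precisely for $i$ with $\bx=\bx_i$). Then I split into two regimes. In Case A, $\bx$ is not any data point, so $\|\by-\bx_i\|_p^{q-p}$ stays bounded above and below for every $i$, and $A_i(\by)\to\infty$ exactly when $i\in V_t(\bx)$. In Case B, $\bx=\bx_l$ for some $l$, so $V_t(\bx)\ni l$ for every $t$; for $q<p$ the term $A_l(\by)$ outgrows all other singular terms because it carries both divergent factors, while for $q=p$ the factor $\|\by-\bx_i\|_p^{q-p}$ collapses to $1$ and all indices in $V_t(\bx)$ diverge at comparable rates.

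The last step in each case is the dominance reduction. Setting $B_t(\by):=\sum_{i\in V_t(\bx)} A_i(\by)$ (or, in Case B with $q<p$, just $A_l(\by)$), one verifies $B_t(\by)\to+\infty$. Using the defining identity $x_i^{(t)}=x^{(t)}$ for every $i\in V_t(\bx)$, the $V_t(\bx)$-part of the numerator equals exactly $x^{(t)}\, B_t(\by)$. Dividing numerator and denominator by $B_t(\by)$ gives
$$\mathbf{T}_{p,q}(\by)^{(t)}=\frac{x^{(t)}+R_N(\by)/B_t(\by)}{1+R_D(\by)/B_t(\by)},$$
where $R_N,R_D$ collect the contributions of indices outside $V_t(\bx)$ and remain bounded uniformly in a neighborhood of $\bx$. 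Letting $\by\to\bx$ forces both remainder terms to vanish, so the ratio tends to $x^{(t)}$; assembling the $d$ coordinates yields $\mathbf{T}_{p,q}(\by)\to\bx$.

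I expect the main obstacle to be the bookkeeping in Case B when $q<p$: with two factors of $A_l(\by)$ diverging simultaneously, one must argue that the cross-terms for $i\in V_t(\bx_l)\setminus\{l\}$ (which carry a bounded $\|\by-\bx_i\|_p^{q-p}$ but a divergent $|y^{(t)}-x_i^{(t)}|^{p-2}$) are of strictly smaller order than $A_l(\by)$, which reduces to comparing the rates at which $\|\by-\bx_l\|_p\to 0$ and $|y^{(t)}-x_l^{(t)}|\to 0$ along an arbitrary approach path in $\bbR^d\setminus\mathcal{S}_p$. The boundary sub-case $p=1$, where $|y^{(t)}-x_i^{(t)}|^{p-2}=|y^{(t)}-x_i^{(t)}|^{-1}$ has the sharpest admissible divergence, similarly deserves separate attention.
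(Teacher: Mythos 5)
Your proposal is correct and follows essentially the same route as the paper's proof: fix a coordinate $t$, observe that every index $i\in V_t(\bx)$ carries the target value $x_i^{(t)}=x^{(t)}$ while its weight $A_i(\by)$ diverges, and that the contributions of indices outside $V_t(\bx)$ remain bounded near $\bx$, so the weighted average tends to $x^{(t)}$. The rate comparison you flag as the main obstacle in Case B with $q<p$ is actually unnecessary once you normalize by the full sum $B_t(\by)=\sum_{i\in V_t(\bx)}A_i(\by)$ rather than by $A_l(\by)$ alone, since all indices in $V_t(\bx)$ contribute the same value $x^{(t)}$ to the numerator regardless of their relative divergence rates; this is precisely how the paper's proof avoids any case split on whether $\bx$ coincides with a data point.
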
	
The proof is provided in Supplementary \ref{sm_lem:contintheorem}. Therefore, we can define
\begin{equation}
\setlength{\abovedisplayskip}{1pt}
\setlength{\belowdisplayskip}{1pt}
\label{eqn:t1extend}
\mathbf{T}_{p,q}(\bx):=\bx, \quad\forall \bx\in\mathcal{S}_p, \ 1\leqs q\leqs p,\ 1\leqs p<2.
\end{equation}
The second lemma confirms that the sequence $\{\by_{(k)}\}_{k\in\bbN}$ generated by $q$P$p$NWAWS is bounded, which is based on the descent property of $\{C_{p,q}(\by_{(k)})\}_{k\in\bbN}$.

\begin{lemma}\label{lem_bounded}
The sequence $\{\by_{(k)}\}_{k\in\bbN}$ generated by $q$P$p$NWAWS is bounded.
\end{lemma}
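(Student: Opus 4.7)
The plan is to derive boundedness from two ingredients already established in the paper: the monotone descent of $\{C_{p,q}(\by_{(k)})\}_{k\in\bbN}$, and the coercivity of $C_{p,q}$. Combining them will force the iterates to lie in a bounded sublevel set.

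First I would invoke Theorem \ref{thm:nonincreasing} for nonsingular iterates and Theorem \ref{thm:singmin} (together with the backtracking line-search construction defining $\lambda_*$) for singular iterates to conclude that $\{C_{p,q}(\by_{(k)})\}_{k\in\bbN}$ is non-increasing, so $C_{p,q}(\by_{(k)}) \leqs C_{p,q}(\by_{(0)}) =: M < \infty$ for every $k \in \bbN$. Hence the whole sequence is trapped in the sublevel set $L_M := \{\by \in \bbR^d : C_{p,q}(\by) \leqs M\}$, and it suffices to prove $L_M$ is bounded.

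For the second step I would establish coercivity of $C_{p,q}$ by a reverse-triangle-inequality estimate. Setting $R := \max_{1\leqs i\leqs m} \|\bx_i\|_p$, any $\by$ with $\|\by\|_p \geqs 2R$ satisfies $\|\by - \bx_i\|_p \geqs \|\by\|_p - \|\bx_i\|_p \geqs \tfrac{1}{2}\|\by\|_p$ for every $i$, hence
\begin{equation*}
C_{p,q}(\by) = \sum_{i=1}^m \eta_i^q \|\by - \bx_i\|_p^q \geqs \Bigl(\sum_{i=1}^m \eta_i^q\Bigr)\Bigl(\tfrac{\|\by\|_p}{2}\Bigr)^q \longrightarrow \infty \text{ as } \|\by\|_p \to \infty.
\end{equation*}
Consequently every $\by \in L_M$ either satisfies $\|\by\|_p < 2R$ or $\|\by\|_p \leqs 2 M^{1/q}/(\sum_{i=1}^m \eta_i^q)^{1/q}$, so $L_M$ is contained in the closed ball of radius $\max\{2R,\, 2M^{1/q}(\sum_i \eta_i^q)^{-1/q}\}$, yielding boundedness of $\{\by_{(k)}\}_{k\in\bbN}$.

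I do not anticipate a serious obstacle here. The only point requiring minor care is to confirm that the monotone-descent conclusion applies uniformly across the hybrid update \eqref{eqn:lqwagen}: the nonsingular branch is handled by Theorem \ref{thm:nonincreasing} and the singular branch by Theorem \ref{thm:singmin} with the finite-termination line search, so $C_{p,q}(\by_{(k+1)}) \leqs C_{p,q}(\by_{(k)})$ holds in every case. The coercivity estimate is the same for all $1 \leqs q \leqs p$ and $1 \leqs p < 2$, so no separate argument for the $p=1$ boundary case is needed.
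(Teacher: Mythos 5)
Your proposal is correct and follows essentially the same route as the paper's proof: both combine the monotone descent of $\{C_{p,q}(\by_{(k)})\}_{k\in\bbN}$ (Theorems \ref{thm:nonincreasing} and \ref{thm:singmin}) with the coercivity of $C_{p,q}$, the only difference being that the paper asserts $\lim_{\|\by\|_2\to\infty}C_{p,q}(\by)=\infty$ without computation and concludes by contradiction along a subsequence, whereas you make the coercivity quantitative via the reverse triangle inequality and bound the sublevel set directly. Your explicit estimate is a harmless (arguably tidier) refinement, not a different argument.
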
	

The proof is provided in Supplementary \ref{sm_lem_bounded}. As emphasized above, the singular set $\mathcal{S}_p$ contains infinite points in the case $1\leqs p<2$, hence the operator $\mathbf{T}_{p,q}$ may not escape from $\mathcal{S}_p$. This is a major unavoidable obstacle for convergence analysis. To overcome this obstacle, we combine Lemma \ref{lem_bounded} and the Bolzano-Weierstrasz theorem, then there exists at least one limit point $y_*$ and a subsequence $\{\by_{(k_v)}\}_{v\in\bbN}$ such that $\lim\limits_{v\to\infty}\by_{(k_v)}=\by_*$. The following lemma indicates that there are only a finite number of limit points of $\{\by_{(k)}\}_{k\in\bbN}$ in the strictly convex case $1< p<2$, even though $\mathcal{S}_p$ has infinite points. This is a novel and significant theoretical result that plays a crucial part in the convergence proof.
\begin{lemma}\label{lem:limitfini}
If $1< p<2$, there are only a finite number of limit points of $\{\by_{(k)}\}_{k\in\bbN}$. Moreover, all these limit points have the same cost function value.
\end{lemma}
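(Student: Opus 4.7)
My plan is to pass from the monotone cost sequence to a common cost value for all limit points, restrict their location using the two operator-descent theorems and strict convexity, and then exploit the combinatorial structure of $\mathcal{S}_p$ to get finiteness.

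First I would use Theorems \ref{thm:nonincreasing} and \ref{thm:singmin} (the latter via the line search defining $\lambda_*$) together with $C_{p,q}\geqs 0$ to conclude that $\{C_{p,q}(\by_{(k)})\}_{k\in\bbN}$ is non-increasing and converges to some $C^*$. By Lemma \ref{lem_bounded} and Bolzano--Weierstrass, at least one limit point exists, and continuity of $C_{p,q}$ forces every limit point $\by_*$ to satisfy $C_{p,q}(\by_*)=C^*$. This settles the ``same cost value'' part of the lemma.

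Next I would rule out all nonsingular limit points except $\bx_*$. If $\by_*\notin\mathcal{S}_p$, a whole neighborhood of $\by_*$ avoids $\mathcal{S}_p$, so a tail of any subsequence $\by_{(k_v)}\to\by_*$ is nonsingular and updated by the continuous map $\mathbf{T}_{p,q}$. Continuity of $\mathbf{T}_{p,q}$ at $\by_*$ combined with $C_{p,q}(\by_{(k_v+1)})\to C^*$ yields $C_{p,q}(\mathbf{T}_{p,q}(\by_*))=C_{p,q}(\by_*)$; the equality case of Theorem \ref{thm:nonincreasing} then forces $\mathbf{T}_{p,q}(\by_*)=\by_*$, and Corollary \ref{cor:charnonsing} identifies $\by_*$ with the unique minimum $\bx_*$ of the strictly convex $C_{p,q}$.

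For singular limit points I would exploit that $\mathcal{S}_p$ is a union of $md$ coordinate hyperplanes $H_{i,t}=\{\by:y^{(t)}=x_i^{(t)}\}$. Each $\by_*\in\mathcal{S}_p$ is labelled by the finite data $J(\by_*):=\{(i,t):\by_*\in H_{i,t}\}$, which takes at most $2^{md}$ possible values. Grouping limit points by $J$, all limit points of a given type lie on a fixed affine subspace $A_J$ on which $C_{p,q}|_{A_J}$ is strictly convex and therefore has a unique minimizer. The strategy is to argue that within each type only that minimizer can be a limit point: at any other singular candidate, Theorem \ref{thm:singmin} provides a strict descent direction which, combined with a neighborhood-uniform stepsize bound, contradicts $C_{p,q}(\by_{(k_v)})\to C^*$. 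Summing over the finitely many types then gives finiteness.

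The main obstacle is converting the pointwise descent of Theorem \ref{thm:singmin} into a bound uniform over a neighborhood of $\by_*$, especially when the approaching iterates $\by_{(k_v)}$ are nonsingular: in that case Lemma \ref{lem:contintheorem} only gives $\mathbf{T}_{p,q}(\by_{(k_v)})\to\by_*$ without an accompanying cost drop, so one must show that the singular-side descent of $\sD_{p,q}$ and $\lambda_*$ propagates through a continuity/compactness argument back into the nonsingular approach. I expect this to be the technically hardest step of the proof.
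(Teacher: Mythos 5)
Your first two steps coincide with the paper's own proof: the monotone, bounded-below cost sequence converges; continuity of $C_{p,q}$ forces every limit point to share the limiting value $a$; and the continuity-of-$\mathbf{T}_{p,q}$ argument (equality case of Theorem \ref{thm:nonincreasing} plus Corollary \ref{cor:charnonsing}) shows that any nonsingular limit point is the unique minimizer $\bx_*$. Up to that point you are reproducing the paper.

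The divergence, and the genuine gap, is in how finiteness of the singular limit points is obtained. The paper does not argue dynamically at all: it proves directly that the level set $\mathcal{C}_a=\{\by: C_{p,q}(\by)=a\}$ contains at most $d+1$ points, by taking enough points that their difference vectors are linearly dependent, writing one point as an affine combination of the others, and invoking strict convexity and Jensen's inequality to contradict the equality of the cost values; the limit point set is then contained in $\{\bx_*\}\cup\bigl(\mathcal{S}_p\cap\mathcal{C}_a\bigr)$ and hence finite. Your plan instead stratifies $\mathcal{S}_p$ into at most $2^{md}$ types $A_J$ and tries to show that each stratum contributes at most one limit point. But the step you yourself flag as the hardest is exactly the missing content: $\mathcal{C}_a\cap A_J$ is a level set of a strictly convex function restricted to the affine subspace $A_J$, which is generically a continuum (a sphere-like hypersurface in $A_J$) whenever $\dim A_J\geqs 2$, so strict convexity of $C_{p,q}|_{A_J}$ and uniqueness of its minimizer buy you nothing by themselves. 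Everything rests on the unproved ``neighborhood-uniform stepsize bound'' that would exclude the other points of $\mathcal{C}_a\cap A_J$ as limit points, and, as you observe, when the approaching subsequence is nonsingular Lemma \ref{lem:contintheorem} only gives $\mathbf{T}_{p,q}(\by_{(k_v)})\to\by_*$ with no quantified cost decrease, so no contradiction with $C_{p,q}(\by_{(k_v)})\to a$ is available from the tools established in the paper. Without that estimate the proposal does not establish finiteness, whereas the paper's level-set argument bypasses the iteration dynamics entirely.
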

The proof is provided in Supplementary \ref{sm_lem:limitfini}. 
\begin{theorem}[Convergence Theorem]
\label{thm:contheorempq}
Let $\{\by_{(k)}\}_{k\in\bbN}$ be the iteration sequence generated by $q$P$p$NWAWS in \eqref{eqn:lqwagen}. If $\by_{(k)}$ hits the minimum point $\bx_*$ of model \eqref{eqn:lpqmediangenreal}, the characterization of minimum (Corollary \ref{cor:charnonsing} and Theorem \ref{thm:charsing}) ensures that this could be recognized and the algorithm will be stopped. Otherwise, the cost function sequence $\{C_{p,q}(\by_{(k)})\}_{k\in\bbN}$ converges. Assume $1< p<2$ in addition. If $\{\by_{(k)}\}_{k\in\bbN}$ hits $\mathcal{S}_p$ for a finite number of times, then $\{\by_{(k)}\}_{k\in\bbN}$ also converges. On the other hand, if $\{\by_{(k)}\}_{k\in\bbN}$ has a nonsingular cluster point, then this cluster point is exactly the minimum point $\bx_*$ and the entire sequence $\{\by_{(k)}\}_{k\in\bbN}$ converges to $\bx_*$.
\end{theorem}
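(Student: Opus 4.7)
The plan is to decompose the theorem into four claims matching the four sentences in the statement and dispatch each with the machinery developed above. First, for recognition of the minimum: Corollary \ref{cor:charnonsing} characterizes a nonsingular minimum by $\mathbf{T}_{p,q}(\by_{(k)})=\by_{(k)}$, while Theorem \ref{thm:charsing} combined with Fermat's rule gives an explicit finite test $\mathbf{0}_d\in \partial C_{p,q}(\by_{(k)})$ in the singular case; both can be verified in finite computation, so the algorithm stops exactly when $\by_{(k)}=\bx_*$. For the convergence of $\{C_{p,q}(\by_{(k)})\}_{k\in\bbN}$, the descent properties (Theorems \ref{thm:nonincreasing} and \ref{thm:singmin}) force $C_{p,q}(\by_{(k+1)})<C_{p,q}(\by_{(k)})$ whenever $\by_{(k)}\ne\bx_*$, and with $C_{p,q}\geqs 0$ the monotone convergence theorem yields a limit $c^*\geqs C_{p,q}(\bx_*)$.

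For the nonsingular cluster-point claim, I would take $\by_*\notin\mathcal{S}_p$ with $\by_{(k_v)}\to \by_*$. Since $\bbR^d\setminus\mathcal{S}_p$ is open, $\by_{(k_v)}\notin\mathcal{S}_p$ for all large $v$, so $\by_{(k_v+1)}=\mathbf{T}_{p,q}(\by_{(k_v)})$. Continuity of $\mathbf{T}_{p,q}$ off $\mathcal{S}_p$ then gives $\by_{(k_v+1)}\to \mathbf{T}_{p,q}(\by_*)$, and continuity of $C_{p,q}$ together with the already-established cost convergence forces $C_{p,q}(\mathbf{T}_{p,q}(\by_*))=c^*=C_{p,q}(\by_*)$. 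The equality case of Theorem \ref{thm:nonincreasing} yields $\mathbf{T}_{p,q}(\by_*)=\by_*$, and Corollary \ref{cor:charnonsing} identifies $\by_*=\bx_*$. To upgrade this to full-sequence convergence, I would use strict convexity of $C_{p,q}$ for $1<p<2$ (unique minimizer) together with Lemma \ref{lem:limitfini}: every cluster point shares cost $c^*=C_{p,q}(\bx_*)$ and is therefore a global minimizer, hence equals $\bx_*$. A bounded sequence (Lemma \ref{lem_bounded}) with a unique cluster point must converge to it.

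For the finite-hits case, after some $K$ the tail $\{\by_{(k)}\}_{k>K}$ is produced entirely by $\mathbf{T}_{p,q}$ on $\bbR^d\setminus\mathcal{S}_p$, and boundedness (Lemma \ref{lem_bounded}) delivers some cluster point $\by_*$. If $\by_*\notin\mathcal{S}_p$, the previous paragraph concludes $\by_*=\bx_*$ and the whole sequence converges. If $\by_*\in\mathcal{S}_p$, I would invoke Lemma \ref{lem:contintheorem} to extend $\mathbf{T}_{p,q}$ continuously so that $\mathbf{T}_{p,q}(\by_*)=\by_*$; along any subsequence converging to $\by_*$, both $\by_{(k_v)}$ and $\by_{(k_v+1)}=\mathbf{T}_{p,q}(\by_{(k_v)})$ approach $\by_*$, so the increments $\by_{(k_v+1)}-\by_{(k_v)}$ vanish. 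Combined with the finiteness of the cluster-point set from Lemma \ref{lem:limitfini}, vanishing increments exclude oscillation between distinct cluster points, forcing convergence of the entire tail.

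The main obstacle is the singular cluster-point subcase of the finite-hits scenario: when iterates approach $\mathcal{S}_p$ asymptotically without ever landing on it, the raw Weiszfeld factors $\|\by-\bx_i\|_p^{q-p}|y^{(t)}-x_i^{(t)}|^{p-2}$ can blow up, so the vanishing-increment step relies delicately on the continuous-extension content of Lemma \ref{lem:contintheorem}. Threading this together with Lemma \ref{lem:limitfini} to rule out cycling among multiple singular cluster points is where the convergence argument earns its keep; by contrast, Parts 1, 2 and 4 follow relatively cleanly from the descent properties and characterizations already established.
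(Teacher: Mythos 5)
Your proposal is correct and follows essentially the same route as the paper: the same decomposition into the four claims, the same use of Corollary \ref{cor:charnonsing} and Theorem \ref{thm:charsing} for recognition, Theorems \ref{thm:nonincreasing} and \ref{thm:singmin} for monotone convergence of the cost, and Lemmas \ref{lem:contintheorem}, \ref{lem_bounded} and \ref{lem:limitfini} to pin down a unique limit point. The only cosmetic difference is in the finite-hits case, where you conclude via vanishing increments plus connectedness of a finite cluster set, while the paper runs an explicit $\delta$-ball contradiction around one limit point; both steps rest on exactly the same continuity content of Lemma \ref{lem:contintheorem}.
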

The proof is provided in Supplementary \ref{sm_thm:contheorempq}. We provide a practical way to verify the conditions of this theorem. When the algorithm reaches the convergence tolerance, we can check whether the last few iterates are singular points. If not, we can consider that $y_{(k)}$ converges to exactly the minimum point $\bx_*$.

To end this section, we indicate that a general constant-step subgradient descent method cannot guarantee convergence in the objective value for $q$P$p$NWLP due to the varying $\lambda_*$ in Theorem \ref{thm:singmin}. Besides, $\nabla C_{p,q}(\by)$ in \eqref{eqn:costgrad} may not be Lipschitz continuous even for $\by\notin\mathcal{S}_p$. We raise a counterexample: let $C_{p,q}(\by):=\|\by\|_p^q$, $\by_1=\varepsilon\bm{1}_d> \bm{0}_d$, and $\by_2=\frac{\varepsilon}{2}\bm{1}_d$. Then 
\begin{equation}
\setlength{\abovedisplayskip}{1pt}
\setlength{\belowdisplayskip}{1pt}
\label{eqn:counter}
\frac{\|\nabla C_{p,q}(\by_1)-\nabla C_{p,q}(\by_2)\|_2}{\|\by_1-\by_2\|_2}=(2-2^{2-q})qd^{\frac{q-p}{p}}\varepsilon^{q-2}.
\end{equation}
Since $q{<}2$, \eqref{eqn:counter} implies  $\lim\limits_{{\varepsilon}\to 0}\frac{\|\nabla C_{p,q}(\by_1)-\nabla C_{p,q}(\by_2)\|_2}{\|\by_1-\by_2\|_2}=\infty$. Therefore, a general subgradient descent method cannot achieve the sublinear convergence rate $O(\frac{1}{k})$. However, practical experiments show that $q$P$p$NWAWS achieves a linear computational convergence rate, which is efficient to solve $q$P$p$NWLP. 

The whole procedure of $q$P$p$NWAWS is illustrated in Supplementary \ref{sec:solvealgo}. In each iteration, the algorithm first identifies whether the current iterate is a singular point and the dimensions where the singularity occurs. Next, it computes the de-singularity subgradient (or the normal gradient if there is no singularity). The de-singularity subgradient can then be used to determine whether the current iterate is a minimum point. If it is not, the algorithm performs a de-singularity subgradient descent step to reduce the cost function, proceeding to the next iteration until the convergence tolerance is met.

\section{Experiment Result}
We adopt the evaluating baseline in \cite{lai2024singularity} with tests from different aspects to assess the performance of the proposed $q$P$p$NWAWS. In the median reversion strategy \cite{RMR2} for online portfolio selection (\textbf{OPS}, \citealt{olpsjmlr,SSPO,SPOLC,egrmvgap}), an important step is to compute the median of the asset prices in a recent time window. In the context of this paper, it aims to find a $q$-th-powered $\ell_p$-norm median
\begin{equation}
\setlength{\abovedisplayskip}{1pt}
\setlength{\belowdisplayskip}{1pt}
\label{mod:po}
	\hat{\bx}\in\argmin_{\by\in\bbR^d} \sum_{i=1}^m \|\by-\bx_i\|_p^q, \ 1\leqs q\leqs p,\ 1\leqs p<2,
\end{equation}
where $\mathbf{x}_i\in \mathbb{R}^d$ represents the price vector of $d$ assets on the $i$-th trading day, and $\{\mathbf{x}_i\}_{i=1}^m$ contains the asset prices for the most recent $m$ trading days.

\renewcommand{\thefootnote}{\arabic{footnote}}
Experiments are conducted on six data sets: CSI300 \cite{lai2024singularity}, NYSE(N) \cite{CWMR}, FTSE100, NASDAQ100 \cite{pubdata2}, FF100, and FF100MEOP \cite{ASMCVAR}. Profiles of these data sets are shown in Table \ref{tab:datasetsprofiles}. These data sets cover financial markets from different regions like China, the United States, and the United Kingdom. They also cover different frequencies, including daily, weekly, and monthly. Their dimensionalities range from $23$ to $100$. CSI300 is extracted by \citet{lai2024singularity} from the CSI300 constituents\footnote{\url{http://www.csindex.com.cn}} of Shanghai Stock Exchange and Shenzhen Stock Exchange in China, while FF100 and FF100MEOP are extracted by \citet{ASMCVAR} from the Kenneth R. French's Data Library\footnote{\url{http://mba.tuck.dartmouth.edu/pages/faculty/ken.french/data_library.html}}. FF100 is built on ME and BE/ME, while FF100MEOP is built on ME and operating profitability, respectively. All these data sets cover a wide range of practical scenarios that are sufficient to test the performance of the proposed $q$P$p$NWAWS. Due to the page limit, the experimental results on CSI300 and NYSE(N) are presented in the main text, while those on other data sets are presented in Supplementary \ref{app:addexperiment} (Tables \ref{tab:ftsesin}$\sim$\ref{tab:ff100meopcwsr}). All these results consistently show the effectiveness of $q$P$p$NWAWS.

The experiments include four parts:
\begin{enumerate}
\item We verify that $q$P$p$NWAWS successfully reduces the cost function at singular iterates, which solves the singularity problem.
\item We validate the computational efficiency of $q$P$p$NWAWS by analyzing the number of iterations and the running time required for convergence.
\item We verify that the computational convergence rate of $q$P$p$NWAWS is a linear convergence rate.
\item By assessing the investing metrics in OPS, we demonstrate the advantages of $q$P$p$NWAWS with $1\leqs p<2$ and $1\leqs q\leqs p$. Hence $q$P$p$NWAWS is useful in a practical sense.
\end{enumerate}

We change $p$ and $q$ in $[1,1.9]$ with $q\leqs p$, which covers enough situations of $1\leqs p<2$ and $1\leqs q\leqs p$. The time window size $m$ is set as $5$ by following previous methods \cite{RMR2,PPT,AICTR,mtcvar}. The convergence tolerance thresholds are set as $Tol=10^{-4}$ and $Tol\_2=10^{-14}$, and the reducing factor $\rho$ in the line search is set as $0.1$. As the observation window moves from $t=1$ to $t=T-m+1$, there are a total of $(T-m+1)$ sets of data points $\{\mathbf{x}_i\}_{i=1}^m$. Therefore, we evaluate the average performance of $q$P$p$NWAWS by conducting the experiments for $(T-m+1)$ times on each data set. The experiments are carried out on a desktop workstation with an Intel Core i9-14900KF CPU, 64-GB DDR5 6000-MHz memory cards, and an Nvidia RTX 4080 graphics card with 16-GB independent memory.

\subsection{Solving the Singularity Problem}
We record the average number of iterations required for $q$P$p$NWAWS to successfully reduce the cost function at singular iterates. The starting iterate $\by_{(0)}$ is set as the singular point $\bx_1$. For each $(q,p)$ pair, we calculate the mean and the standard deviation (STD) of the number of iterations required on the $(T-m+1)$ sets of data points $\{\bx_i\}_{i=1}^m$, shown in Tables \ref{table1} and \ref{table11}. Results show that $q$P$p$NWAWS successfully reduces the cost function in only a few iterations, thereby solving the singularity problem. As $p$ and $q$ increase, the average number of iterations shows a decreasing trend, ranging from $4.92$ to $1.95$ on CSI300 and from $3.91$ to $1.45$ on NYSE(N). A smaller $\rho$ may lead to even fewer iterations required in the line search of the step size $\lambda_*$.

\subsection{Computational Cost and Convergence}
\label{sec:comcost}
We record the average number of iterations and the average running time for $q$P$p$NWAWS to achieve convergence in Tables \ref{tab:comcostcsi} and \ref{tab:comcostnyse}. Results show that $q$P$p$NWAWS achieves rapid convergence that the average running times are all smaller than $0.03$s and the numbers of iterations are no larger than $36$. As $p$ and $q$ increase, the average number of iterations also shows a decreasing trend, ranging from $35.93$ to $7.19$ on CSI300 and from $26.71$ to $6.50$ on NYSE(N). To summarize, $q$P$p$NWAWS successfully converges at a desirable speed.

\subsection{Computational Convergence Rate}
\label{sec:converateexperiment}
We use the following formula to assess the computational convergence rate of $q$P$p$NWAWS:
\begin{equation}
\setlength{\abovedisplayskip}{1pt}
\setlength{\belowdisplayskip}{1pt}
\frac{1}{Iter-2}\sum_{o=3}^{Iter}\frac{\|\by_{(o-1)}-\by_{(Iter)}\|_2}{\|\by_{(o-2)}-\by_{(Iter)}\|_2},
\end{equation}
where $Iter$ and $\by_{(Iter)}$ denote the total number of iterations and the final iterate, respectively. Tables \ref{tab:converatecsi300} and \ref{tab:converatenysen} show the mean and STD of the computational convergence rates for $q$P$p$NWAWS with different $(q,p)$ pairs. As $p$ and $q$ increase, the average computational convergence rate decreases from $0.8$ to $0.09$. Since they are all significantly smaller than $1$, $q$P$p$NWAWS achieves at least a linear computational convergence rate.

\subsection{Investing Performance}
\label{sec:investperform}
To further assess the effectiveness of $q$P$p$NWAWS in real-world applications, we employ two main investing metrics, the final cumulative wealth (CW) and the daily Sharpe Ratio (SR, \citealt{SHARPratio}), to conduct OPS experiments. The final CW indicates the final gain of an investing strategy at the end of the entire investment, while the SR is a kind of risk-adjusted return. We use $q$P$p$NWAWS to compute the $q$-th-powered $\ell_p$-norm median in \eqref{mod:po}, and then adopt the strategy in \cite{RMR2} to produce the CW and SR scores. Results with different $(q,p)$ pairs as well as the original setting $(q,p)=(1,2)$ in \cite{RMR2} are given in Tables \ref{tab:cwsrcsi} and \ref{tab:cwsrnysen}. They indicate that $q$P$p$NWAWS achieves the best results with $(q,p)=(1,1.6)$ on CSI300 and with $(q,p)=(1.3,1.9)$ on NYSE(N). Besides, several $(q,p)$ pairs perform better than the original setting $(q,p)=(1,2)$. These results indicate that $q$P$p$NWAWS for solving $q$P$p$NWLP is useful and advantageous with $1\leqs p<2$ and $1\leqs q\leqs p$.

\section{Conclusions and Future Works}
\label{sec:conclusion}
This paper proposes a $q$-th-Powered $\ell_p$-Norm Weiszfeld Algorithm without Singularity ($q$P$p$NWAWS) for the $q$-th-Powered $\ell_p$-Norm Weber Location Problem ($q$P$p$NWLP) with $1\leqs p<2$ and $1\leqs q\leqs p$, which includes all the rest unsolved situations in this problem. One main difficulty to solve this problem is that the singular points constitute a continuum set, so that any gradient-type algorithm may visit the singular set for infinite times. $q$P$p$NWAWS is able to characterize the subgradients and minimum at any singular or nonsingular point. If it is not a minimum point, $q$P$p$NWAWS can further reduce the cost function. Moreover, it guarantees convergence in the objective function value.

Experimental results on six real-world data sets show that $q$P$p$NWAWS successfully reduces the cost function in a few iterations at a singular point. It achieves convergence in a few iterations and shows a linear computational convergence rate. Moreover, it performs well in the online portfolio selection task that its final cumulative wealth and its Sharpe ratio with several $(q,p)$ pairs are higher than those with the original setting $(q,p)=(1,2)$. Thus $q$P$p$NWAWS and $q$P$p$NWLP with $1\leqs p<2$ and $1\leqs q\leqs p$ are useful and advantageous in practice. In future works, we will extend the de-singularity methodology to the multi-facility location problem.

\section*{Acknowledgments}
This work is supported in part by the National Natural Science Foundation of China under grants 62176103, 62276114, 62206110, in part by Guangdong Basic and Applied Basic Research Foundation under grant 2023B1515120064, in part by the Science and Technology Planning Project of Guangzhou under grants 2024A04J9896, 202206030007, in part by Guangdong-Macao Advanced Intelligent Computing Joint Laboratory under grant 2020B1212030003, in part by the Key Laboratory of Smart Education of Guangdong Higher Education Institutes, Jinan University under grant 2022LSYS003, in part by the Fundamental Research Funds for the Central Universities, JNU under grant 21623202, and in part by the Major Key Project of PCL (No. PCL2024A04).

\bibliography{qrref}

\appendix

\setcounter{table}{0}
\renewcommand{\thetable}{A\arabic{table}}

\setcounter{figure}{0}
\renewcommand{\thefigure}{A\arabic{figure}}

\onecolumn
\section*{Supplementary Material}	

\section{Solving Algorithm}
\label{sec:solvealgo}
To simplify the expressions, we convert the multiplicities from $\{\eta_i\}_{i=1}^m$ back to $\{\xi_i\}_{i=1}^m$.
\begin{breakablealgorithm}
	\caption{$q$-th-Powered $\ell_p$-Norm Weiszfeld Algorithm without Singularity ($q$P$p$NWAWS)}
	\label{alg:lqwaws}
	\begin{algorithmic}
		\REQUIRE Given $m$ distinct data points $\{\mathbf{x}_i\}_{i=1}^m$, the corresponding multiplicities $\{\xi_i\}_{i=1}^m$, the order of power $q$ and the parameter $p$ of the $\ell_p$ norm, the line search factor $0<\rho<1$ and the tolerance thresholds $Tol$ and $Tol\_2$. \\
		\STATE Initialize with a starting point $\mathbf{y}_{(0)}$.
		\WHILE{1}
		\STATE Initialize $Sing=0$ and $l=0$.
		\FOR{$t=1$ to $d$}
		\STATE Compute  $V_t(\by_{(k)})=\{i\in \{1,\dots,m\}\ s.t.\ y_{(k)}^{(t)}=x_i^{(t)}\}$ and $V_t'(\by_{(k)})=\{1,\dots,m\}\backslash V_t(\by_{(k)})$.
		\IF {$|V_t(\by_{(k)})|>0$}
		\STATE $Sing=Sing+1$.
		\ENDIF
		\ENDFOR
		\IF {$Sing=0$}
		\FOR{$t=1$ to $d$}
		\STATE Compute $y_{(k+1)}^{(t)}=\frac{\sum_{i=1}^m \xi_i\| \by_{(k)}{-}\bx_i  \|_p^{q{-}p}|y_{(k)}^{(t)}{-}x_i^{(t)}|^{p{-}2}x_i^{(t)}}{\sum_{i=1}^m \xi_i\| \by_{(k)}{-}\bx_i  \|_p^{q{-}p}|{y}_{(k)}^{(t)}{-}{x}_i^{(t)}|^{p{-}2}}$.
		\ENDFOR
		\IF {$\by_{(k+1)}=\by_{(k)}$}
		\STATE Set $\bx_*=\by_{(k)}$ and break.
		\ENDIF
		\ELSE
		\FOR{$t=1$ to $d$}
		\STATE Compute $\big(\nabla D_{p,q}(\by_{(k)})\big)^{(t)}=\sum\limits_{i\in V_t'(\by_{(k)})} q\xi_i\| \by_{(k)}-\bx_i  \|_p^{q-p}|{y}_{(k)}^{(t)}-{x}_i^{(t)}|^{p-2}(y_{(k)}^{(t)}-{x}_i^{(t)})$.
		\ENDFOR
		\IF{$\|\nabla D_{p,q}(\by_{(k)})\|_2=0$}
		\STATE Set $\bx_*=\by_{(k)}$ and break.
		\ELSIF {$p=q=1$}
		\STATE Initialize $RE_a=0$.
		\FOR{$t=1$ to $d$}
		\STATE Compute $a^{(t)}=\sum_{i\in V_t(\by_{(k)}) }\xi_i$.
		\IF{$|\big(\nabla D_{p,q}(\by_{(k)})\big)^{(t)}|>a^{(t)}$}
		\STATE $RE_a=RE_a+1$.
		\ENDIF
		\ENDFOR
		\IF{$RE_a=0$}
		\STATE Set $\bx_*=\by_{(k)}$ and break.
		\ENDIF
 		\ELSIF {$q=1$, $1<p<2$}
		\FOR {$i=1$ to $m$}
		\IF{$\by_{(k)}=\bx_i$}
		\STATE $l=i$. Break.
		\ENDIF
		\ENDFOR
		\IF {$l\neq0$ and $\|\nabla D_{p,q}(\by_{(k)})\|_r\leqs \xi_l$}
		\STATE Set $\bx_*=\bx_l$ and break.
		\ENDIF
		\ENDIF
		\STATE Compute $\sD_{p,q}(\by_{(k)})$
		by \eqref{eqn:descdirect}.		
		\STATE Set $w=0$, $\lambda_0=\|\sD_{p,q}(\by_{(k)})\|_p$.
		\WHILE {$C_{p,q}(\by_{(k)}-\lambda_w\sD_{p,q}(\by_{(k)})\geqs C_{p,q}(\by_{(k)})$}
		\STATE $\lambda_{w+1}=\rho\lambda_w$. $w\leftarrow w+1$.
		\ENDWHILE
		\STATE $\by_{(k+1)}=\by_{(k)}-\lambda_w\sD_{p,q}(\by_{(k)})$.  
		\ENDIF
		\IF {$\|\by_{(k+1)}-\by_{(k)}\|_2/\|\by_{(k)}\|_2\leqs Tol$ or $\|C_{p,q}(\by_{(k+1)})-C_{p,q}(\by_{(k)})\|_2/\|C_{p,q}(\by_{(k)})\|_2\leqs Tol\_2$}
		\STATE Set $\bx_*=\by_{(k+1)}$ and break.
		\ENDIF
		\STATE $k\leftarrow k+1$
		\ENDWHILE
		\ENSURE The output $\bx_*$.
	\end{algorithmic}	
\end{breakablealgorithm}

\section{Proofs}
\subsection{Proof of Theorem \ref{thm:nonincreasing}}\label{sm_thm:nonincreasing}
We need the following lemma from \cite{beckenbach2012inequalities} to prove this Theorem.
\begin{lemma}
\label{lem:nonincreasing2}
If $u<1$, $v<1$ and $\frac{1}{u}+\frac{1}{v}=1$, then for $a>0$, $b>0$, $a^{\frac{1}{u}}b^{\frac{1}{v}}\geqslant\frac{a}{u}+\frac{b}{v}$.
\end{lemma}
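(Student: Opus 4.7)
The plan is to reduce the two-variable inequality to a one-variable convexity check by an appropriate normalization. First I would do a brief sign analysis on $u,v$ to see what configurations are even feasible: since $\frac{1}{u}+\frac{1}{v}=1$, the two reciprocals $s:=1/u$ and $t:=1/v$ satisfy $s+t=1$. If both $u,v\in(0,1)$ then $s,t>1$ and $s+t>2$, a contradiction; if both $u,v<0$ then $s,t<0$ and $s+t<0$, again a contradiction. Hence exactly one of $u,v$ lies in $(0,1)$ and the other in $(-\infty,0)$. Swapping the roles of $(a,u)$ and $(b,v)$ if necessary, assume $u\in(0,1)$, so $s>1$ and $t=1-s<0$. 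The claim then reads
\begin{equation*}
a^{s}b^{t}\geqs sa+tb,\qquad s>1,\ t=1-s,\ a,b>0.
\end{equation*}

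Next I would remove the second variable by homogeneity. Dividing both sides by $b>0$ and setting $x:=a/b>0$ yields the equivalent one-variable inequality
\begin{equation*}
x^{s}\geqs sx+(1-s),\qquad x>0,\ s>1.
\end{equation*}
This is now a standard convexity/tangent-line inequality, which I would verify by elementary calculus. Define $h(x):=x^{s}-sx-(1-s)$. Then $h(1)=1-s-(1-s)=0$ and $h'(x)=s\bigl(x^{s-1}-1\bigr)$. Because $s-1>0$, the map $x\mapsto x^{s-1}$ is strictly increasing on $(0,\infty)$, so $h'(x)<0$ on $(0,1)$ and $h'(x)>0$ on $(1,\infty)$. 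Thus $x=1$ is the unique global minimizer of $h$ on $(0,\infty)$, and $h(x)\geqs h(1)=0$, which is exactly what we want. Tracing back through the substitutions gives $a^{1/u}b^{1/v}\geqs \frac{a}{u}+\frac{b}{v}$, with equality iff $x=1$, i.e.\ $a=b$.

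There is no real obstacle here; the only point that requires a little care is the initial sign analysis, which must be done up front so that the reduction to $s>1$ is justified, after which the argument becomes a one-line first-derivative test. I would also mention the equality case $a=b$, since it will be useful elsewhere in the paper when this lemma is applied to characterize when the Weiszfeld-style descent is strict.
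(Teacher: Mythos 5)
Your proof is correct, but there is little in the paper to compare it against: the paper does not prove Lemma \ref{lem:nonincreasing2} at all — it simply imports it from \cite{beckenbach2012inequalities} and uses it as a black box in the proof of Theorem \ref{thm:nonincreasing}. What you have supplied is therefore a self-contained replacement for that citation, and it is the standard reverse-Young (weighted AM--GM with a negative weight) argument: your sign analysis correctly shows that the hypotheses force exactly one of $u,v$ to lie in $(0,1)$ and the other to be negative; since $\frac{1}{u}+\frac{1}{v}=1$, both sides are positively homogeneous of degree $1$ in $(a,b)$, so dividing by $b$ legitimately reduces the claim to the tangent-line inequality $x^{s}\geqslant sx+(1-s)$ for $x>0$, $s>1$; and the first-derivative test settles that. (As a sanity check, the case $u=\tfrac12$, $v=-1$ reduces to $(a-b)^2\geqslant 0$, consistent with your equality condition.) The equality case $a=b$ that you record is a bonus the citation does not state; just be aware that the paper does not actually need it — in the proof of Theorem \ref{thm:nonincreasing} the lemma is only invoked in the non-strict direction (inequality \eqref{neq_yp_1}), and the strictness of the descent comes instead from the uniqueness of the minimizer of the surrogate $\tilde{C}_{p,q}^{(t)}$ in \eqref{neq_tildecc}. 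One small presentational point: state explicitly that $u\neq 0$ and $v\neq 0$ are implicit in the hypothesis $\frac{1}{u}+\frac{1}{v}=1$, so that your three-way case analysis (both in $(0,1)$, both negative, or one of each) is exhaustive.
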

\begin{proof}[Proof of Theorem \ref{thm:nonincreasing}]
Let $\tilde{C}_{p,q}^{(t)}(a):=\sum_{i=1}^m \eta_i^q\| \by_{(k)}-\bx_i  \|_p^{q-p}|{y}_{(k)}^{(t)}-{x}_i^{(t)}|^{p-2}(a-{x}_i^{(t)})^2$ for $a\in\bbR$. Then $\tilde{C}_{p,q}^{(t)}(a)$ is strictly convex. We first analyze the case that $1<p<2$.  Since $\tilde{C}_{p,q}^{(t)'}(a)=2\sum_{i=1}^m \eta_i^q\| \by_{(k)}-\bx_i  \|_p^{q-p}|{y}_{(k)}^{(t)}-{x}_i^{(t)}|^{p-2}(a-{x}_i^{(t)})$, then $\tilde{C}_{p,q}^{(t)}(a)$ has a unique minimum at ${y}_{(k+1)}^{(t)}$, which indicates that if ${y}_{(k+1)}^{(t)}\neq{y}_{(k)}^{(t)}$, $$
\tilde{C}_{p,q}^{(t)}({y}_{(k+1)}^{(t)})<\tilde{C}_{p,q}^{(t)}({y}_{(k)}^{(t)})=\sum_{i=1}^m \eta_i^q\| \by_{(k)}-\bx_i  \|_p^{q-p}|{y}_{(k)}^{(t)}-{x}_i^{(t)}|^{p}.
$$ 
Therefore,
\begin{equation}\label{neq_tildecc}
\sum_{t=1}^d\tilde{C}_{p,q}^{(t)}({y}_{(k+1)}^{(t)})
<\sum_{t=1}^d\tilde{C}_{p,q}^{(t)}({y}_{(k)}^{(t)})=\sum_{i=1}^m\sum_{t=1}^d \eta_i^q\| \by_{(k)}-\bx_i  \|_p^{q-p}|{y}_{(k)}^{(t)}-{x}_i^{(t)}|^{p}=\sum_{i=1}^m\eta_i^q\| \by_{(k)}-\bx_i  \|_p^q.
\end{equation}
On the other hand, it follows from Lemma \ref{lem:nonincreasing2} that
\begin{align}
&\sum_{t=1}^d\tilde{C}_{p,q}^{(t)}({y}_{(k+1)}^{(t)})\nonumber\\
=&\sum_{t=1}^d\sum_{i=1}^m \eta_i^q\| \by_{(k)}-\bx_i  \|_p^{q-p}|{y}_{(k)}^{(t)}-{x}_i^{(t)}|^{p-2}|y_{(k+1)}^{(t)}-{x}_i^{(t)}|^2\nonumber\\
=&\sum_{t=1}^d\sum_{i=1}^m\eta_i^q\| \by_{(k)}-\bx_i  \|_p^{q-p}\big(|{y}_{(k)}^{(t)}-{x}_i^{(t)}|^p\big)^{\frac{{p-2}}{p}}\big(|y_{(k+1)}^{(t)}-{x}_i^{(t)}|^p\big)^{\frac{2}{p}}\nonumber\\
\geqslant&\sum_{t=1}^d\sum_{i=1}^m\eta_i^q\| \by_{(k)}-\bx_i  \|_p^{q-p}\big(\frac{p-2}{p}|{y}_{(k)}^{(t)}-{x}_i^{(t)}|^p+\frac{2}{p}|y_{(k+1)}^{(t)}-{x}_i^{(t)}|^p\big)\nonumber\\
=&(1-\frac{2}{p})\sum_{i=1}^m\eta_i^q\| \by_{(k)}-\bx_i  \|_p^q+\frac{2}{p}\sum_{i=1}^m\eta_i^q(\| \by_{(k)}-\bx_i  \|_p^q)^{\frac{q-p}{q}}(\| \by_{(k+1)}-\bx_i  \|_p^q)^{\frac{p}{q}}\nonumber\\
\geqslant&(1-\frac{2}{p})\sum_{i=1}^m\eta_i^q\| \by_{(k)}-\bx_i  \|_p^q+\frac{2}{p}\sum_{i=1}^m\eta_i^q\big(\frac{q-p}{q}\| \by_{(k)}-\bx_i  \|_p^q+\frac{p}{q}\| \by_{(k+1)}-\bx_i  \|_p^q\big)\nonumber\\
=&(1-\frac{2p}{pq})C_{p,q}(\by_{(k)})+\frac{2p}{pq}C_{p,q}(\by_{(k+1)})\nonumber\\
=&(1-\frac{2}{q})C_{p,q}(\by_{(k)})+\frac{2}{q}C_{p,q}(\by_{(k+1)}).
\label{neq_yp_1}
\end{align}
Combining both directions of the inequalities of \eqref{neq_tildecc} and \eqref{neq_yp_1}, we know that 
\begin{equation*}
(1-\frac{2}{q})C_{p,q}(\by_{(k)})+\frac{2}{q}C_{p,q}(\by_{(k+1)})<C_{p,q}(\by_{(k)}).
\end{equation*} 
Then $C_{p,q}(\by_{(k+1)})\leqs C_{p,q}(\by_{(k)})$ with equality holds only when $\by_{(k+1)}=\mathbf{T}_1( \by_{(k)})= \by_{(k)}$.	
\end{proof}

\subsection{Proof of Corollary \ref{cor:charnonsing}}\label{sm_{cor:charnonsing}}
\begin{proof}
Since $\by_{(k)}\notin \mathcal{S}_p$, we can compute that
\begin{equation}
\label{eqn:cqpt}
\big(\nabla C_{p,q}(\by_{(k)})\big)^{(t)}=\sum_{i=1}^m q\eta_i^q\| \by_{(k)}-\bx_i  \|_p^{q-p}|{y}_{(k)}^{(t)}-{x}_i^{(t)}|^{p-2}(y_{(k)}^{(t)}-{x}_i^{(t)}).
\end{equation}
Combining \eqref{eqn:lqwaeta} and \eqref{eqn:cqpt}, we obtain the following equivalence for all $t$:
\begin{equation}
\label{eqn:lqwa4}
\big(\mathbf{T}_{p,q}\big(\by_{(k)}\big)\big)^{(t)}= {y}_{(k)}^{(t)}\Longleftrightarrow\frac{1}{q}\big(\nabla C_{p,q}(\by_{(k)})\big)^{(t)}=\sum_{i=1}^m \eta_i^q\| \by_{(k)}-\bx_i  \|_p^{q-p}|{y}_{(k)}^{(t)}-{x}_i^{(t)}|^{p-2}(y_{(k)}^{(t)}-{x}_i^{(t)})=0,
\end{equation}
which indicates $\mathbf{T}_{p,q}(\by_{(k)})=\by_{(k)}\Leftrightarrow\nabla C_{p,q}(\by_{(k)})=\mathbf{0}_d$. Since $C_{p,q}(\by)$ is convex, then $\nabla C_{p,q}(\by_{(k)})=\mathbf{0}_d \Leftrightarrow$ $\by_{(k)}$ is a minimum point $\bx_*$ satisfying \eqref{eqn:lpqmediangenreal}. 
\end{proof}

\subsection{Proof of Theorem \ref{thm:charsing}}\label{sm_thm:charsing}

\begin{proof}
Let $\by+\lambda \bz$ $(\lambda>0, \bz\in\bbR^d \ \text{and}\ \|\bz\|_2=1)$ be a point displaced from $\by$ towards an arbitrary direction $\bz$. Recall the fact that
\begin{equation}\label{eq_directionsubg}
\bv\in\partial C_{p,q}(\by)
\Longleftrightarrow  \bv^{\top}\bz\leqs \frac{\ud C_{p,q}(\by+\lambda \bz)}{\ud\lambda}\arrowvert_{\lambda=0} \text{ for all }\bz.			
\end{equation}
Then we calculate the subgradient(s) $\partial C_{p,q}(\by)$ by calculating $\frac{\ud C_{p,q}(\by+\lambda \bz)}{\ud\lambda}$ first, which is given by
\begin{align}
\label{eqn:lqgradlbd}
&\frac{\ud C_{p,q}(\by+\lambda \bz)}{\ud\lambda}
{=}\sum_{i=1}^m\sum_{t=1}^d q\eta_i^q z^{(t)}\| \by+\lambda\bz-\bx_i  \|_p^{q-p}|{y}^{(t)}+\lambda z^{(t)}-{x}_i^{(t)}|^{p-2}(y^{(t)}+\lambda z^{(t)}-{x}_i^{(t)})\nonumber\\
{=}&M_{p,q}(\by,\lambda)+G_{p,q}(\by,\lambda),
\end{align}
where	
\begin{align*}
M_{p,q}(\by,\lambda)&:=\sum_{i=1}^m\sum_{t\notin U_i(\by)} q\eta_i^qz^{(t)}\| \by+\lambda\bz-\bx_i  \|_p^{q-p}|y^{(t)}+\lambda z^{(t)}-{x}_i^{(t)}|^{p-2}(y^{(t)}+\lambda z^{(t)}-{x}_i^{(t)}),\\
G_{p,q}(\by,\lambda)&:=\sum_{i=1}^m\sum_{t\in U_i(\by)}q\eta_i^q\lambda^{p-1}|z^{(t)}|^p\|\by+\lambda\bz-\bx_i\|_p^{q-p}.
\end{align*}
We compute the subgradient(s) of $C_{p,q}(\by)$ at $\by\in\mathcal{S}_p$ by considering three cases based on the values of $p$ and $q$: (a) $q=p=1$; (b) $q=1$, $1<p<2$; (c) $1<q\leqs p$, $1<p<2$.
	
Case (a): We compute the limit of $\frac{\ud}{\ud\lambda}C_{1,1}(\by+\lambda \bz)$ as $\lambda \rightarrow 0$, which is given by
\begin{equation}
\label{eqn:lpqgradlbd011}
\frac{\ud C_{1,1}(\by+\lambda \bz)}{\ud\lambda}\arrowvert_{\lambda=0}=M_{1,1}(\by,0)+\sum_{t=1}^d|z^{(t)}|\sum_{i\in V_t(\by) }\eta_i.
\end{equation}
From Definition \ref{defn:desinggrad}, $M_{1,1}(\by,0)=\nabla D_{1,1}(\by)^\top\bz$. Then \eqref{eqn:lpqgradlbd011} can be formulated as		
\begin{equation}
\label{eqn:lpqgradlbd01var}
\frac{\ud C_{1,1}(\by+\lambda \bz)}{\ud\lambda}\arrowvert_{\lambda=0}=\nabla D_{1,1}(\by)^\top\bz+\ba^{\top}|\bz|,
\end{equation}
where $\ba\in\bbR^d$ is defined by $a^{(t)}=\sum_{i\in V_t(\by) }\eta_i$ for all $i$. 	
We can easily know from \eqref{eq_directionsubg} and \eqref{eqn:lpqgradlbd01var} that the subgradients of $C_{1,1}(\by)$ at $\by\in\mathcal{S}_p$ can be formulated as
\[
\partial C_{1,1}(\by)=\{\nabla D_{1,1}(\by)+\bu\},
\]
where $\bu\in\bbR^d$ is an arbitrary vector satisfying $ -a^{(t)}\leqs u^{(t)}\leqs a^{(t)}$ for all $t$.
	
Case (b): In this case, if $\by=\bx_l$ for some $l\in\{1,\dots,m\}$, then the limit of $\frac{\ud C_{p,1}(\bx_l+\lambda \bz)}{\ud\lambda}$ as $\lambda\to0$ is given by
\begin{equation}
\label{eqn:lqgradlbd01}
\frac{\ud C_{p,1}(\bx_l+\lambda \bz)}{\ud\lambda}\arrowvert_{\lambda=0}
=M_{p,1}(\bx_l,0)+\eta_l\|\bz\|_p
=\nabla D_{p,1}(\bx_l)^{\top}\bz+\eta_l\|\bz\|_p.
\end{equation}
Let $\bu=\nabla D_{p,1}(\bx_l)^{\top}+\eta_l\bb$, where $\bb\in\bbR^d$ satisfies $\|\bb\|_r\leqs 1$. To prove that
\begin{equation}\label{eq_pacp1}
\partial C_{p,1}(\bx_l)=\{\nabla D_{p,1}(\bx_l)^{\top}+\eta_l\bb, \ \|\bb\|_r\leqs 1\},
\end{equation} 
we first prove that $\bu\in\partial C_{p,1}(\bx_l)$. From the H$\ddot{o}$lder's inequality, for $p>1$, $r>1$ with $\frac{1}{p}+\frac{1}{r}=1$, we have
\begin{equation*}
(\bu-\nabla D_{p,1}(\bx_l))^{\top}\bz\leqs\|\bu-\nabla D_{p,1}(\bx_l)\|_r\|\bz\|_p,
\end{equation*}
Then 
\begin{equation*}
\bu^{\top}\bz\leqs\nabla D_{p,1}(\bx_l)^{\top}\bz+\|\bu-\nabla D_{p,1}(\bx_l)\|_r\|\bz\|_p\leqs\nabla D_{p,1}(\bx_l)^{\top}\bz+\eta_l\|\bz\|_p,
\end{equation*}
which implies that $\bu\in\partial C_{p,1}(\bx_l)$. We next prove that all the subgradients of $C_{p,1}(\bx_l)$ can be formulated as $C_{p,1}(\bx_l)=\nabla D_{p,1}(\bx_l)^{\top}+\eta_l\bb$ by contradiction. If there exists some $\bv\in\partial C_{p,1}(\bx_l)$ such that $\bv\neq\nabla D_{p,1}(\bx_l)^{\top}+\eta_l\bb$, then $\|\bv-\nabla D_{p,1}(\bx_l)\|_r>\eta_l$. Let $\bz=\frac{|\bv-\nabla D_{p,1}(\bx_l)|^{\frac{r}{p}}}{\||\bv-\nabla D_{p,1}(\bx_l)|^{\frac{r}{p}}\|_2}$. Then $\|\bz\|_2=1$ and $\bz^p=\frac{|\bv-\nabla D_{p,1}(\bx_l)|^r}{\||\bv-\nabla D_{p,1}(\bx_l)|^{\frac{r}{p}}\|_2}$, which satisfies the condition for equality to hold in the H$\ddot{o}$lder's inequality. Then
\begin{equation*}
|\bv-\nabla D_{p,1}(\bx_l)|^{\top}\bz=\|\bv-\nabla D_{p,1}(\bx_l)\|_r\|\bz\|_p
\end{equation*}
and therefore,
\begin{equation}\label{eq_vzeta}
|\bv-\nabla D_{p,1}(\bx_l)|^{\top}\bz>\eta_l\|\bz\|_p.
\end{equation}
Define a vector $\tilde{\bz}\in\bbR^d$ such that
\[\tilde{z}^{(t)}:=
\begin{cases}
-z^{(t)}&\text{if }\big(\bv-\nabla D_{p,1}(\bx_l)\big)^{(t)}<0,\\
z^{(t)}&\text{else}.
\end{cases}
\]
Then $\|\tilde{\bz}\|_p=\|\bz\|_p$ and $(\bv-\nabla D_{p,1}(\bx_l))^{\top}\tilde{\bz}=|\bv-\nabla D_{p,1}(\bx_l)|^{\top}\bz$. It follows from \eqref{eq_vzeta} that
\begin{equation*}
(\bv-\nabla D_{p,1}(\bx_l))^{\top}\tilde{\bz}>\eta_l\|\tilde{\bz}\|_p,
\end{equation*}
which implies $
\bv^{\top}\tilde{\bz}>\nabla D_{p,1}(\bx_l)^{\top}\tilde{\bz}+\eta_l\|\tilde{\bz}\|_p$.
This contradicts that $\bv\in\partial C_{p,1}(\bx_l)$.
Then we can conclude that \eqref{eq_pacp1} holds.

If $\by\in\mathcal{S}_p\backslash\{\bx_i\}_{i=1}^m$, then the limit of $\frac{\ud}{\ud\lambda}C_{p,1}(\by+\lambda \bz)$ as $\lambda \rightarrow 0$ is given by 
\begin{equation}
\label{eqn:lpqgradlbd01p}
\frac{\ud C_{p,1}(\by+\lambda \bz)}{\ud\lambda}\arrowvert_{\lambda=0}=M_{p,1}(\bx_l,0)=\nabla D_{p,1}(\by)^\top\bz,
\end{equation}
which together with  \eqref{eqn:lpqgradlbd01p} indicates that the subgradient of $C_{p,1}(\by)$ at $\by\in\mathcal{S}_p\backslash\{\bx_i\}_{i=1}^m$ can be formulated as
\begin{equation}\label{eqn:lpqgradlbd01qvar}
\partial C_{p,1}(\by)=\{\nabla D_{p,1}(\by)\}.
\end{equation}

Case (c): We can compute that the limit of $\frac{\ud}{\ud\lambda}C_{p,q}(\by+\lambda \bz)$ as $\lambda \rightarrow 0$ is given by
\begin{equation}
\label{eqn:lpqgradlbd0pq}
\frac{\ud C_{p,q}(\by+\lambda \bz)}{\ud\lambda}\arrowvert_{\lambda=0}=M_{p,q}(\by,0)=\nabla D_{p,q}(\by)^\top\bz.
\end{equation}
From \eqref{eq_directionsubg} and \eqref{eqn:lpqgradlbd0pq}, we can easily know that the subgradient of $C_{p,q}$ at $\by\in\mathcal{S}_p$ can be formulated as
\[
\partial C_{p,q}(\by)=\{\nabla D_{p,q}(\by)\}.
\]

In conclusion, the subgradient(s) of $C_{p,q}$ at $\by\in\mathcal{S}_p$ can be represented by:
\begin{equation}\label{partialdpq}
\setlength{\abovedisplayskip}{1pt}
\setlength{\belowdisplayskip}{1pt}
\partial C_{p,q}(\by){=}
\begin{cases}
\{\nabla D_{1,1}(\by){+}\bu\}\ \text{where}\ -a^{(t)}\leqs u^{(t)}\leqs a^{(t)}, \ \ \forall t, \ &\text{if }p=q=1,\\
\{\nabla D_{p,1}(\by)\},\ &\text{if }q{=}1,1{<}p{<}2, \by\in \mathcal{S}_p\backslash\{\bx_i\}_{i=1}^m,\\
\{\nabla D_{p,1}(\bx_l)+\eta_l \bb\}\quad \text{where}\quad \|\bb\|_r\leqs 1, \ &\text{if }q{=}1,1{<}p{<}2, \by=\bx_l,\\
\{\nabla D_{p,q}(\by)\},\ &\text{if }1<q\leqs p,1<p<2,
\end{cases}
\end{equation}
where $a^{(t)}=\sum_{i\in V_t(\by) }\eta_i$ and $\|\cdot\|_r$ is the conjugate norm of $\|\cdot\|_p$ such that $\frac{1}{r}+\frac{1}{p}=1$.
From Fermat's rule, we know from \eqref{partialdpq} that a singular point $\by\in\mathcal{S}_p$ is a minimum point satisfying \eqref{eqn:lpqmediangenreal} if and only if 
\begin{equation}\label{optcondition}
\begin{cases}
|(\nabla D_{1,1}(\by))^{(t)}|\leqs a^{(t)}\ \text{for all }t, &\text{if }p=q=1,\\
\nabla D_{p,1}(\by)=\bm{0}_d,\ &\text{if }q{=}1,1{<}p{<}2\text{ and } \by\in \mathcal{S}_p\backslash\{\bx_i\}_{i=1}^m,\\
\|\nabla D_{p,1}(\bx_l)\|_r\leqs \eta_l, \ &\text{if }q{=}1,1{<}p{<}2\text{ and }\by=\bx_l,\\
\nabla D_{p,q}(\by)=\bm{0}_d&\text{if }1<q\leqs p,1<p<2.
\end{cases}
\end{equation}
\end{proof}

\subsection{Proof of Theorem \ref{thm:singmin}}\label{sm_thm:singmin}

\begin{proof}
We prove this theorem by considering three cases based on the values of $p$ and $q$: (a) $q=p=1$; (b) $q=1$, $1<p<2$; (c) $1<q\leqs p$, $1<p<2$. It suffices to show that the directional derivative along the direction $-\sD_{p,q}(\by)$ is negative.

Case (a): By setting $\mathcal{D}_{1,1}(\by)=-\frac{\sD_{1,1}(\by)}{\|\nabla D_{1,1}(\by)\|_2}=-\frac{\nabla D_{1,1}(\by)}{\|\nabla D_{1,1}(\by)\|_2}$ in \eqref{eqn:lpqgradlbd01var}, we can deduce that 
\begin{equation}
\label{direct11}
\frac{\ud C_{1,1}\big(\by+\lambda \mathcal{D}_{1,1}(\by)\big)}{\ud\lambda}\arrowvert_{\lambda=0}=-\|\nabla D_{1,1}(\by)\|_2+\mathcal{D}_{1,1}(\by)^{\top}\ba.
\end{equation}
If $\by$ is not a minimum point, then it follows from \eqref{optcondition} that $|\big(\nabla D_{1,1}(\by)\big)^{(t)}|> a^{(t)}$ for all $t$, which together with \eqref{direct11} implies that $\frac{\ud C_{1,1}(\by+\lambda \mathcal{D}_{1,1}(\by))}{\ud\lambda}\arrowvert_{\lambda=0}<0$. Then $\frac{\ud C_{1,1}(\by-\lambda \sD_{1,1}(\by))}{\ud\lambda}\arrowvert_{\lambda=0}<0$.
	
Case (b): If $\by\in \mathcal{S}_p\backslash\{\bx_i\}_{i=1}^m$, then by setting $\mathcal{D}_{p,1}(\by)=-\frac{\sD_{p,1}(\by)}{\|\nabla D_{p,1}(\by)\|_2}=-\frac{\nabla D_{p,1}(\by)}{\|\nabla D_{p,1}(\by)\|_2}$ in \eqref{eqn:lpqgradlbd01p}, we can deduce that 
\begin{equation}
\label{direct1q1}
\frac{\ud C_{p,1}(\by+\lambda \mathcal{D}_{p,1}(\by))}{\ud\lambda}\arrowvert_{\lambda=0}=-\|\nabla D_{p,1}(\by)\|_2.
\end{equation}
If $\by$ is not a minimum point, then from \eqref{optcondition}, $\|\nabla D_{p,1}(\by)\|_2>0$, which together with \eqref{direct1q1} implies that $\frac{\ud C_{p,1}(\by+\lambda \mathcal{D}_{p,1}(\by))}{\ud\lambda}\arrowvert_{\lambda=0}<0$. Then $\frac{\ud C_{p,1}(\by-\lambda \sD_{p,1}(\by))}{\ud\lambda}\arrowvert_{\lambda=0}<0$.

If $\by=\bx_l$, we set $\mathcal{D}_{p,1}(\by)=-\frac{\sD_{p,1}(\bx_l)}{\|\nabla D_{p,1}(\bx_l)\|_2}=-\frac{\big(\nabla D_{p,1}(\bx_l)\big)^{\frac{r}{p}}}{\|\nabla D_{p,1}(\bx_l)\|_2}$, where $(\bw)^{\frac{r}{p}}:=\sign(\bw) \odot |\bw|^{\frac{r}{p}}$
and $\odot$ denotes the element-wise multiplication. Then it follows from \eqref{eqn:lqgradlbd01} that
\begin{equation}\label{direct1q}
\frac{\ud C_{p,1}(\bx_l+\lambda \mathcal{D}_{p,1}(\by))}{\ud\lambda}\arrowvert_{\lambda=0}=-\frac{\nabla D_{p,1}(\bx_l)^{\top}\big(\nabla D_{p,1}(\bx_l)\big)^{\frac{r}{p}}}{\|\nabla D_{p,1}(\bx_l)\|_2}+\eta_l\frac{\|\big(\nabla D_{p,1}(\bx_l)\big)^{\frac{r}{p}}\|_p}{\|\nabla D_{p,1}(\bx_l)\|_2}.
\end{equation}
Note that $\big(|\nabla D_{p,1}(\bx_l)|^{\frac{r}{p}}\big)^p=|\nabla D_{p,1}(\bx_l)|^r$. Then from the condition for the equality to hold in the H$\ddot{o}$lder's inequality, if $p>1$, $r>1$ and $\frac{1}{p}+\frac{1}{r}=1$, then 
\begin{equation}\label{holder}
\nabla D_{p,1}(\bx_l)^{\top}\big(\nabla D_{p,1}(\bx_l)\big)^{\frac{r}{p}}=|D_{p,1}(\bx_l)|^{\top}|\nabla D_{p,1}(\bx_l)|^{\frac{r}{p}}=\|\big(\nabla D_{p,1}(\bx_l)\big)^{\frac{r}{p}}\|_p\|\nabla D_{p,1}(\bx_l)\|_r.
\end{equation}
If $\bx_l$ is not a minimum point, then from \eqref{optcondition},  $\|\nabla D_{p,1}(\bx_l)\|_r>\eta_l$, which together with \eqref{direct1q} and \eqref{holder} indicates that $
\frac{\ud C_{p,1}(\bx_l+\lambda \mathcal{D}_{p,1}(\by))}{\ud\lambda}\arrowvert_{\lambda=0}<0
$. Then $\frac{\ud C_{p,1}(\bx_l-\lambda \sD_{p,1}(\by))}{\ud\lambda}\arrowvert_{\lambda=0}<0$.

Case (c): By setting $\mathcal{D}_{p,q}(\by)=-\frac{ \sD_{p,q}(\by)}{\|\nabla D_{p,q}(\by)\|_2}=-\frac{\nabla D_{p,q}(\by)}{\|\nabla D_{p,q}(\by)\|_2}$ in \eqref{eqn:lpqgradlbd0pq}, we can deduce that 
\begin{equation}
\label{directpq}
\frac{\ud C_{p,q}(\by+\lambda \mathcal{D}_{p,q}(\by))}{\ud\lambda}\arrowvert_{\lambda=0}=-\|\nabla D_{p,q}(\by)\|_2.
\end{equation}
If $\by$ is not a minimum point, then from \eqref{optcondition}, $\|\nabla D_{p,q}(\by)\|_2>0$, which together with \eqref{directpq} implies that $\frac{\ud C_{p,q}(\by+\lambda \mathcal{D}_{p,q}(\by))}{\ud\lambda}\arrowvert_{\lambda=0}<0$. Then $\frac{\ud C_{p,q}(\by-\lambda \sD_{p,q}(\by))}{\ud\lambda}\arrowvert_{\lambda=0}<0$ 

Summarizing Cases (a)(b)(c), the direction $-\sD_{p,q}(\by)$ defined in \eqref{eqn:descdirect} is a descent direction. Hence there exists some $\lambda_*>0$ such that for any $0<\lambda\leqs\lambda_*$, $C_{p,q}(\by-\lambda \sD_{p,q}(\by))<C_{p,q}(\by)$.
\end{proof}

\subsection{Proof of Lemma \ref{lem:contintheorem}}\label{sm_lem:contintheorem}
\begin{proof}
	First, subtracting the left side of \eqref{eqn:continlem} by its right side leads to
	\begin{align}
	\label{eqn:contintheoremproof}
	&\big(\mathbf{T}_{p,q}(\by)\big)^{(t)}-x^{(t)} \nonumber\\
	=&\frac{\sum_{i=1}^m\eta_i^q\| \by-\bx_i  \|_p^{q-p}|{y}^{(t)}-{x}_i^{(t)}|^{p-2}({x}_i^{(t)}-x^{(t)})}{\sum_{i=1}^m \eta_i^q\| \by-\bx_i  \|_p^{q-p}|{y}^{(t)}-{x}_i^{(t)}|^{p-2}}\\
	=&\frac{\sum_{i\notin V_t(\bx)}\eta_i^q\| \by-\bx_i  \|_p^{q-p}|{y}^{(t)}-{x}_i^{(t)}|^{p-2}({x}_i^{(t)}-x^{(t)})}{\sum_{i\in V_t(\bx)} \eta_i^q\| \by-\bx_i  \|_p^{q-p}|{y}^{(t)}-{x}_i^{(t)}|^{p-2}+\sum_{i\notin V_t(\bx)} \eta_i^q\| \by-\bx_i  \|_p^{q-p}|{y}^{(t)}-{x}_i^{(t)}|^{p-2}},\qquad \forall 1\leqs t\leqs d.\nonumber
	\end{align}
	Since $\eta_i>0$ for all $i$, $q\leqs p$ and $p<2$, then for all $t$, 
	\begin{align*}
&\lim_{y^{(t)}\rightarrow x^{(t)},\by\notin\mathcal{S}_p}\sum_{i\notin V_t(\bx)}\eta_i^q\| \by-\bx_i  \|_p^{q-p}|{y}^{(t)}-{x}_i^{(t)}|^{p-2}({x}_i^{(t)}-x^{(t)})\\
=&\sum_{i\notin V_t(\bx)}\eta_i^q\| \bx-\bx_i  \|_p^{q-p}|{x}^{(t)}-{x}_i^{(t)}|^{p-2}({x}_i^{(t)}-x^{(t)})\\
=&c_{t,1},\\
&\lim_{y^{(t)}\rightarrow x^{(t)},\by\notin\mathcal{S}_p}\sum_{i\notin V_t(\bx)} \eta_i^q\| \by-\bx_i  \|_p^{q-p}|{y}^{(t)}-{x}_i^{(t)}|^{p-2}\\
=&\sum_{i\notin V_t(\bx)} \eta_i^q\| \bx-\bx_i  \|_p^{q-p}|{x}^{(t)}-{x}_i^{(t)}|^{p-2}\\
=&c_{t,2},\\
	&\lim_{y^{(t)}\rightarrow x^{(t)},\by\notin\mathcal{S}_p}\sum_{i\in V_t(\bx)} \eta_i^q\| \by-\bx_i  \|_p^{q-p}|{y}^{(t)}-{x}_i^{(t)}|^{p-2}\\
	=&+\infty,
	\end{align*}
for some fixed constants $c_{t,1},c_{t,2}\in \bbR$.  Therefore, 
	\begin{align}
	\label{eqn:contintheoremproof3}
	&\lim_{\by\rightarrow \bx, \by\notin\mathcal{S}_p}\|\mathbf{T}_{p,q}(\by){-}\bx\|_2 \nonumber\\
	{=}&\lim_{\by\rightarrow \bx, \by\notin\mathcal{S}_p}\big(\sum_{t=1}^d\big(\frac{ \sum_{i\notin V_t(\bx)}\eta_i^q\| \by-\bx_i  \|_p^{q-p}|{y}^{(t)}-{x}_i^{(t)}|^{p-2}({x}_i^{(t)}-x^{(t)})}{\sum_{i\in V_t(\bx)} \eta_i^q\| \by-\bx_i  \|_p^{q-p}|{y}^{(t)}-{x}_i^{(t)}|^{p-2}+ \sum_{i\notin V_t(\bx)} \eta_i^q\| \by-\bx_i  \|_p^{q-p}|{y}^{(t)}-{x}_i^{(t)}|^{p-2}}\big)^2\big)^{\frac{1}{2}}\nonumber\\
	{=}&0,
	\end{align}
	which indicates that $\mathbf{T}_{p,q}(\by)\rightarrow \bx$ as $\by\rightarrow \bx$ and $\by\notin\mathcal{S}_p$.  
\end{proof}	

\subsection{Proof of Lemma \ref{lem_bounded}}\label{sm_lem_bounded}

\begin{proof}
	It can be easily found that $\lim\limits_{\|\by\|_2\to\infty}C_{p,q}(\by)=\infty$, then we prove this lemma by yielding a contradiction. Suppose the sequence $\{\by_{(k)}\}_{k\in\bbN}$ generated by $q$P$p$NWAWS is unbounded, then there exists a subsequence $\{\by_{(k_v)}\}_{v\in\bbN}$ such that $\|\by_{(k_v)}\|_2\to\infty$ and $C_{p,q}(\by_{(k_v)})\to \infty$. However, Theorems \ref{thm:nonincreasing} and \ref{thm:singmin} indicate that $C_{p,q}(\by_{(k_v)})\leqs C_{p,q}(\by_{(0)}), \forall v$. This yields a contradiction, thus $\{\by_{(k)}\}_{k\in\bbN}$ is bounded. 
\end{proof}

\subsection{Proof of Lemma \ref{lem:limitfini}}\label{sm_lem:limitfini}

\begin{proof}
We know from Lemma \ref{lem_bounded} that $\{\by_{(k)}\}_{k\in\bbN}$ is bounded, then from the Bolzano-Weierstrasz theorem, there exists at least one point $y_*$ and a subsequence $\{\by_{(k_v)}\}_{v\in\bbN}$ such that $\lim\limits_{v\to\infty}\by_{(k_v)}=\by_*$. We then investigate the following two cases regarding whether $\by_*\notin\mathcal{S}_p$ or $\by_*\in\mathcal{S}_p$.

\noindent \textbf{Case 1.} If $\by_*\notin\mathcal{S}_p$, then we can assume that $\{\by_{(k_v)}\}_{v\in\bbN}\notin\mathcal{S}_p$. Since the operator $\mathbf{T}_{p,q}$ is continuous, then
\begin{align}
\label{eqn:contingenupdate}
\lim_{v\rightarrow \infty}\mathbf{T}_{p,q}(\by_{(k_v)})=\mathbf{T}_{p,q}(\by_*).
\end{align}
From Theorems \ref{thm:nonincreasing} and \ref{thm:singmin}, the sequence $\{C_{p,q}(\by_{(k)})\}_{k\in\bbN}$ is non-increasing. Furthermore, since $C_{p,q}(\by)\geqs0$, the sequence is bounded below. Therefore, we can conclude that the sequence $\{C_{p,q}(\by_{(k)})\}_{k\in\bbN}$ converges. According to the convergence of  $\{C_{p,q}(\by_{(k)})\}_{k\in\bbN}$, it has a limit and any subsequence of $\{C_{p,q}(\by_{(k)})\}_{k\in\bbN}$ should have the same limit. In particular, $C_{p,q}(\by_{(k_v)})$ and $C_{p,q}(\mathbf{T}_{p,q}(\by_{(k_v)}))$ are two subsequences of $C_{p,q}(\by_{(k)})$. Therefore,
\begin{align}
\label{eqn:subsequencelim}
\lim_{v\rightarrow \infty}C_{p,q}(\mathbf{T}_{p,q}(\by_{(k_v)}))=\lim_{v\rightarrow \infty}C_{p,q}(\by_{(k_v)}).\quad
\end{align}
Since $C_{p,q}$ is continuous, then it follows from (\ref{eqn:contingenupdate}) and (\ref{eqn:subsequencelim}) that
\begin{align}
\label{eqn:mincharstar}
C_{p,q}(\mathbf{T}_{p,q}(\by_*))=C_{p,q}(\by_*).
\end{align}
Since $\by_*\notin \mathcal{S}_p$, then Theorem \ref{thm:nonincreasing} and \eqref{eqn:mincharstar} indicate $\by_*=\mathbf{T}_{p,q}(\by_*)$. By Corollary \ref{cor:charnonsing}, $\by_*=\mathbf{M}$. 

\noindent \textbf{Case 2.} If $\by_*\in\mathcal{S}_p$, then we denote the limit point set of $\{\by_{(k)}\}_{k\in\bbN}$ by $\{\by_{j*}\}_{j\in \mJ}$ with $\mJ$ denoting some index set. Since $\{C_{p,q}(\by_{(k)})\}_{k\in\bbN}$ converges, we have 
\begin{equation}\label{eq_cpq}
C_{p,q}(\by_{j*})=a,\qquad \forall j\in \mJ,
\end{equation}	
for some $a\geqs 0$. Summarizing Case 1 and Case 2, $\by_*\in\{\mathbf{M}\}\bigcup\big(\mathcal{S}_p \bigcap\{\by: C_{p,q}(\by)=a\}\big)$. In other words, the limit point set of $\{\by_{(k)}\}_{k\in\bbN}$ is a subset of $\{\mathbf{M}\}\bigcup\big(\mathcal{S}_p \bigcap\{\by: C_{p,q}(\by)=a\}\big)$.

Now we only need to prove that $\mathcal{C}_{a}:=\{\by\in \bbR^d : C_{p,q}(\by)=a\}$ is a finite set, which can be done by yielding a contradiction. Suppose there exist $(n+1)$ points $\bu_1,\bu_2,\cdots,\bu_{n+1}\in \mathcal{C}_{a}$ such that $(\bu_1-\bu_{n+1})$,$(\bu_2-\bu_{n+1})$,$\cdots$,$(\bu_n-\bu_{n+1})$ are linear dependent vectors. Then there exist $\zeta_1'$,$\zeta_2'$,$\cdots$,$\zeta_n'$ that are not all zero such that:
\begin{align}
\label{eqn:lindep}
\zeta_1'(\bu_1-\bu_{n+1})+\zeta_2'(\bu_2-\bu_{n+1})+\cdots+\zeta_n'(\bu_n-\bu_{n+1})=0.
\end{align}
We then investigate the following two cases regarding whether $\sum_{o=1}^n \zeta_o'\ne 0$ or $\sum_{o=1}^n \zeta_o'=0$.

\noindent \textbf{Case 1.} If $\sum_{o=1}^n \zeta_o'\ne 0$, then we let $\zeta_j=\frac{\zeta_j'}{\sum_{o=1}^n \zeta_o'}$, $\forall j\in\{1,\cdots,n\}$. Then $\sum_{j=1}^n\zeta_j=1$ and \eqref{eqn:lindep} becomes $\bu_{n+1}=\zeta_1\bu_1+\zeta_2\bu_2+\cdots+\zeta_n\bu_n$. It follows from the strict convexity of $C_{p,q}$ and Jensen's inequality that
\begin{equation}
\zeta_1C_{p,q}(\bu_1)+\zeta_2C_{p,q}(\bu_2)+\cdots+\zeta_n C_{p,q}(\bu_n)>C_{p,q}(\bu_{n+1}),
\end{equation}
which contradicts the fact that $C_{p,q}(\bu_1)=C_{p,q}(\bu_2)=\cdots=C_{p,q}(\bu_{n+1})=a$. Since there are at most $d$ linear independent vectors in $\bbR^d$, there are at most $(d+1)$ points in $\mathcal{C}_{a}$, or else the above contradiction will occur. 

\noindent \textbf{Case 2.} If $\sum_{o=1}^n \zeta_o'=0$, then we assume $\zeta_n' \ne 0$ without loss of generality, because $\zeta_1'$,$\zeta_2'$,$\cdots$,$\zeta_n'$ are not all zero. Then \eqref{eqn:lindep} becomes
\begin{align}
\label{eqn:lindep2}
\zeta_1'\bu_1+\zeta_2'\bu_2+\cdots+\zeta_{n-1}'\bu_{n-1}=-\zeta_n'\bu_n\quad \text{and} \quad \sum_{o=1}^{n-1} \zeta_o'=-\zeta_n'.
\end{align}
Let $\zeta_j=\frac{\zeta_j'}{\sum_{o=1}^{n-1} \zeta_o'}$, $\forall j\in\{1,\cdots,n-1\}$. Then $\sum_{j=1}^{n-1}\zeta_j=1$ and \eqref{eqn:lindep2} becomes $\bu_{n}=\zeta_1\bu_1+\zeta_2\bu_2+\cdots+\zeta_{n-1}\bu_{n-1}$. Again by the strict convexity of $C_{p,q}$ and Jensen's inequality,
\begin{equation}
\zeta_1C_{p,q}(\bu_1)+\zeta_2C_{p,q}(\bu_2)+\cdots+\zeta_{n-1} C_{p,q}(\bu_{n-1})>C_{p,q}(\bu_{n}),
\end{equation}
which contradicts the fact that $C_{p,q}(\bu_1)=C_{p,q}(\bu_2)=\cdots=C_{p,q}(\bu_{n})=a$. Since there are at most $d$ linear independent vectors in $\bbR^d$, there are at most $(d+1)$ points in $\mathcal{C}_{a}$, or else the above contradiction will occur. Summarizing Case 1 and Case 2, $\mathcal{C}_{a}$ is a finite set and $\{\mathbf{M}\}\bigcup\big(\mathcal{S}_p \bigcap\mathcal{C}_{a}\big)$ is also a finite set.
\end{proof}

\subsection{Proof of Theorem \ref{thm:contheorempq}}\label{sm_thm:contheorempq}

\begin{proof}
To verify whether $\by_{(k)}$ is a minimum point, we can first check if $\by_{(k)}$ belong to $\mathcal{S}_p$. If $\by_{(k)}\notin\mathcal{S}_p$, then Corollary \ref{cor:charnonsing} implies that $\mathbf{T}_{p,q}( \by_{(k)})= \by_{(k)} \Leftrightarrow$ $\by_{(k)}=\bx_*$. Thus, we can verify whether $\by_{(k)}$ is a fixed point of $\mathbf{T}_{p,q}$. If $\by_{(k)}\in\mathcal{S}_p$, then we can directly check whether $\mathbf{0}_d\in \partial C_{p,q}(\by_{(k)})$ by \eqref{eqn:subgrad}.

By Theorems \ref{thm:nonincreasing} and \ref{thm:singmin}, we know that the sequence $\{C_{p,q}(\by_{(k)})\}_{k\in\bbN}$ is non-increasing. Furthermore, since $C_{p,q}(\by)\geqs0$, $\{C_{p,q}(\by_{(k)})\}_{k\in\bbN}$ is bounded below. Therefore, we can conclude that the sequence $\{C_{p,q}(\by_{(k)})\}_{k\in\bbN}$ converges.

If $\{\by_{(k)}\}_{k\in\bbN}$ hits $\mathcal{S}_p$ for finite times, then we can only consider the rear of the sequence that has already passed all the singular points, still denoted by $\{\by_{(k)}\}_{k\in\bbN}$ with $\{\by_{(k)}\}_{k\in\bbN}\cap\mathcal{S}_p=\emptyset$. Now we prove that the sequence $\{\by_{(k)}\}_{k\in\bbN}$ converges by showing that there exists only one limit point for $\{\by_{(k)}\}_{k\in\bbN}$. Lemma \ref{lem:limitfini} indicates that there exist only a finite number of limit points for $\{\by_{(k)}\}_{k\in\bbN}$, denoted by $\by_{1*},\by_{2*},\dots,\by_{o*}$. Consider the subsequence $\{\by_{(k_{v})}\}_{v\in\bbN}$ such that $\lim\limits_{v\to\infty}\by_{(k_{v})}=\by_{1*}$. Let $\delta=\frac{\min\limits_{i,j\in\{1,2,\dots,o\},i\neq j}\|\by_{i*}-\by_{j*}\|_2}{2}>0$. Then we consider a $\delta$-neighborhood around $\by_{1*}$, denoted by $B(\by_{1*},\delta)$. Then there exists only one limit point in $B(\by_{1*},\delta)$. We can choose a subsequence of $\{\by_{(k_{v})}\}_{v\in\bbN}$, denoted by $\{\by_{(k_{v_n})}\}_{n\in\bbN}$, such that $\{\mathbf{T}_{p,q}(\by_{(k_{v_n})})\}_{n\in\bbN}\cap B(\by_{1*},\delta)=\emptyset$, because there are other limit points $\by_{j*}\ne \by_{1*}$. Then 
\begin{equation}\label{eq_y1}
\lim\limits_{n\to\infty}\mathbf{T}_{p,q}(\by_{(k_{v_n})})\neq \by_{1*}.
\end{equation}
If $\by_{1*}\notin\mathcal{S}_p$, then it follows from the continuity of $\mathbf{T}_{p,q}$ that $\lim\limits_{n\to\infty}\mathbf{T}_{p,q}(\by_{(k_{v_n})})=\mathbf{T}_{p,q}(\by_{1*})$, which together with \eqref{eq_y1} indicates that  $\mathbf{T}_{p,q}(\by_{1*})\neq\by_{1*}$. Then from Theorem \ref{thm:nonincreasing}, $C_{p,q}(\mathbf{T}_{p,q}(\by_{1*}))<C_{p,q}(\by_{1*})$, which violates \eqref{eq_cpq}. If $\by_{1*}\in\mathcal{S}_p$, then from Lemma \ref{lem:contintheorem}, $
\lim\limits_{n\to\infty}\mathbf{T}_{p,q}(\by_{(k_{v_n})})= \by_{1*}$, which violates \eqref{eq_y1}. Therefore, there exists only one limit point for $\{\by_{(k)}\}_{k\in\bbN}$.
\end{proof}

\section{Additional Experimental Results (Tables \ref{tab:ftsesin}$\sim$\ref{tab:ff100meopcwsr})}
\label{app:addexperiment}

\begin{table*}[h]
	\centering	
	\scalebox{0.76}{
		\begin{tabular}{c c c c c c c c c c c }
			\hline
			\diagbox{q}{p}& 1.0& 1.1 & 1.2 & 1.3 & 1.4 & 1.5 & 1.6 & 1.7 & 1.8 & 1.9 \\ \hline
			1.0 &	$3.97\pm0.29$&	$2.57\pm1.3$&	$1.35\pm0.5$&	$1.24\pm0.42$&	$1.22\pm0.42$&	$1.21\pm0.41$&	$1.21\pm0.41$&	$1.21\pm0.41$&	$1.21\pm0.4$&	$1.21\pm0.41$\\
			1.1 &	-&	$2.91\pm1.67$&	$1.53\pm0.51$&	$1.41\pm0.49$&	$1.34\pm0.47$&	$1.28\pm0.45$&	$1.24\pm0.43$&	$1.22\pm0.41$&	$1.2\pm0.4$&	$1.2\pm0.4$\\
			1.2 &	-&	-&	$1.61\pm0.75$&	$1.37\pm0.48$&	$1.31\pm0.46$&	$1.27\pm0.44$&	$1.24\pm0.43$&	$1.21\pm0.41$&	$1.19\pm0.4$&	$1.19\pm0.39$\\
			1.3 &	-&	-&	-&	$1.4\pm0.6$&	$1.3\pm0.46$&	$1.26\pm0.44$&	$1.23\pm0.42$&	$1.21\pm0.4$&	$1.19\pm0.39$&	$1.18\pm0.39$\\
			1.4 &	-&	-&	-&	-&	$1.31\pm0.5$&	$1.25\pm0.43$&	$1.22\pm0.42$&	$1.2\pm0.4$&	$1.18\pm0.39$&	$1.18\pm0.38$\\
			1.5 &	-&	-&	-&	-&	-&	$1.26\pm0.45$&	$1.21\pm0.41$&	$1.19\pm0.4$&	$1.18\pm0.39$&	$1.18\pm0.38$\\
			1.6 &	-&	-&	-&	-&	-&	-&	$1.22\pm0.43$&	$1.19\pm0.39$&	$1.18\pm0.39$&	$1.18\pm0.38$\\
			1.7 &	-&	-&	-&	-&	-&	-&	-&	$1.19\pm0.4$&	$1.18\pm0.39$&	$1.17\pm0.38$\\
			1.8 &	-&	-&	-&	-&	-&	-&	-&	-&	$1.19\pm0.4$&	$1.17\pm0.38$\\
			1.9 &	-&	-&	-&	-&	-&	-&	-&	-&	-&	$1.18\pm0.38$\\
			\hline
	\end{tabular}}
	\caption{Average number of iterates for $q$P$p$NWAWS to reduce the cost function at a singular point on FTSE100 (mean$\pm$STD)}
	\label{tab:ftsesin}
\end{table*}

\begin{table*}[t]
	\centering	
	\scalebox{0.76}{
		\begin{tabular}{c c c c c c c c c c c }
			\hline
			\diagbox{q}{p}& 1.0& 1.1 & 1.2 & 1.3 & 1.4 & 1.5 & 1.6 & 1.7 & 1.8 & 1.9 \\ \hline
			1.0 &	$3.76\pm0.44$&	$2.63\pm1.56$&	$1.17\pm0.38$&	$1.08\pm0.27$&	$1.08\pm0.27$&	$1.07\pm0.26$&	$1.07\pm0.25$&	$1.07\pm0.25$&	$1.06\pm0.24$&	$1.06\pm0.24$\\
			1.1 &	-&	$3.18\pm1.8$&	$1.34\pm0.49$&	$1.21\pm0.41$&	$1.13\pm0.33$&	$1.11\pm0.31$&	$1.09\pm0.29$&	$1.08\pm0.27$&	$1.07\pm0.25$&	$1.06\pm0.23$\\
			1.2 &	-&	-&	$1.39\pm0.66$&	$1.18\pm0.39$&	$1.12\pm0.33$&	$1.1\pm0.3$&	$1.09\pm0.28$&	$1.07\pm0.26$&	$1.06\pm0.23$&	$1.06\pm0.23$\\
			1.3 &	-&	-&	-&	$1.17\pm0.39$&	$1.12\pm0.32$&	$1.09\pm0.29$&	$1.08\pm0.28$&	$1.07\pm0.25$&	$1.05\pm0.23$&	$1.05\pm0.22$\\
			1.4 &	-&	-&	-&	-&	$1.11\pm0.31$&	$1.09\pm0.29$&	$1.07\pm0.26$&	$1.06\pm0.23$&	$1.05\pm0.22$&	$1.05\pm0.21$\\
			1.5 &	-&	-&	-&	-&	-&	$1.09\pm0.28$&	$1.07\pm0.26$&	$1.06\pm0.23$&	$1.05\pm0.22$&	$1.04\pm0.21$\\
			1.6 &	-&	-&	-&	-&	-&	-&	$1.08\pm0.27$&	$1.05\pm0.23$&	$1.05\pm0.21$&	$1.04\pm0.21$\\
			1.7 &	-&	-&	-&	-&	-&	-&	-&	$1.05\pm0.22$&	$1.05\pm0.21$&	$1.04\pm0.21$\\
			1.8 &	-&	-&	-&	-&	-&	-&	-&	-&	$1.05\pm0.21$&	$1.04\pm0.21$\\
			1.9 &	-&	-&	-&	-&	-&	-&	-&	-&	-&	$1.05\pm0.21$\\
			\hline
		\end{tabular}}
	\caption{Average number of iterates for $q$P$p$NWAWS to reduce the cost function at a singular point on NASDAQ100 (mean$\pm$STD).}
	\label{tab:nasdaqsin}
\end{table*}	

\begin{table*}[t]
	\centering	
	\scalebox{0.76}{
		\begin{tabular}{c c c c c c c c c c c }
			\hline
			\diagbox{q}{p}& 1.0& 1.1 & 1.2 & 1.3 & 1.4 & 1.5 & 1.6 & 1.7 & 1.8 & 1.9 \\ \hline
			1.0 &	$2.51\pm1.19$&	$1.65\pm1.15$&	$1.2\pm0.59$&	$1.19\pm0.58$&	$1.19\pm0.56$&	$1.14\pm0.45$&	$1.11\pm0.42$&	$1.11\pm0.41$&	$1.11\pm0.4$&	$1.1\pm0.36$\\
			1.1 &	-&	$1.92\pm1.39$&	$1.2\pm0.61$&	$1.19\pm0.58$&	$1.19\pm0.56$&	$1.17\pm0.45$&	$1.12\pm0.4$&	$1.1\pm0.38$&	$1.1\pm0.37$&	$1.09\pm0.33$\\
			1.2 &	-&	-&	$1.21\pm0.61$&	$1.18\pm0.57$&	$1.19\pm0.55$&	$1.17\pm0.45$&	$1.14\pm0.42$&	$1.1\pm0.39$&	$1.09\pm0.35$&	$1.08\pm0.32$\\
			1.3 &	-&	-&	-&	$1.18\pm0.56$&	$1.19\pm0.55$&	$1.17\pm0.45$&	$1.16\pm0.44$&	$1.1\pm0.38$&	$1.09\pm0.35$&	$1.08\pm0.29$\\
			1.4 &	-&	-&	-&	-&	$1.18\pm0.53$&	$1.17\pm0.45$&	$1.17\pm0.45$&	$1.11\pm0.39$&	$1.09\pm0.34$&	$1.07\pm0.28$\\
			1.5 &	-&	-&	-&	-&	-&	$1.16\pm0.44$&	$1.16\pm0.44$&	$1.14\pm0.43$&	$1.1\pm0.35$&	$1.07\pm0.26$\\
			1.6 &	-&	-&	-&	-&	-&	-&	$1.16\pm0.44$&	$1.15\pm0.43$&	$1.14\pm0.39$&	$1.1\pm0.29$\\
			1.7 &	-&	-&	-&	-&	-&	-&	-&	$1.16\pm0.44$&	$1.16\pm0.41$&	$1.13\pm0.34$\\
			1.8 &	-&	-&	-&	-&	-&	-&	-&	-&	$1.15\pm0.41$&	$1.13\pm0.34$\\
			1.9 &	-&	-&	-&	-&	-&	-&	-&	-&	-&	$1.13\pm0.34$\\
			\hline
	\end{tabular}}
	\caption{Average number of iterates for $q$P$p$NWAWS to reduce the cost function at a singular point on FF100 (mean$\pm$STD).}
	\label{tab:ff100sin}
\end{table*}	

\begin{table*}[t]
	\centering	
	\scalebox{0.76}{
		\begin{tabular}{c c c c c c c c c c c }
			\hline
			\diagbox{q}{p}& 1.0& 1.1 & 1.2 & 1.3 & 1.4 & 1.5 & 1.6 & 1.7 & 1.8 & 1.9 \\ \hline
			1.0 &	$2.69\pm1.1$&	$1.59\pm1.15$&	$1.09\pm0.31$&	$1.08\pm0.28$&	$1.07\pm0.27$&	$1.07\pm0.26$&	$1.07\pm0.27$&	$1.07\pm0.26$&	$1.07\pm0.26$&	$1.07\pm0.26$\\
			1.1 &	-&	$1.79\pm1.24$&	$1.08\pm0.28$&	$1.08\pm0.27$&	$1.08\pm0.27$&	$1.08\pm0.27$&	$1.07\pm0.26$&	$1.07\pm0.26$&	$1.07\pm0.25$&	$1.07\pm0.25$\\
			1.2 &	-&	-&	$1.12\pm0.44$&	$1.08\pm0.27$&	$1.08\pm0.27$&	$1.07\pm0.26$&	$1.07\pm0.26$&	$1.07\pm0.25$&	$1.07\pm0.25$&	$1.07\pm0.25$\\
			1.3 &	-&	-&	-&	$1.07\pm0.26$&	$1.07\pm0.26$&	$1.07\pm0.26$&	$1.07\pm0.26$&	$1.07\pm0.25$&	$1.07\pm0.25$&	$1.07\pm0.25$\\
			1.4 &	-&	-&	-&	-&	$1.07\pm0.26$&	$1.07\pm0.26$&	$1.07\pm0.26$&	$1.07\pm0.25$&	$1.07\pm0.25$&	$1.07\pm0.25$\\
			1.5 &	-&	-&	-&	-&	-&	$1.07\pm0.26$&	$1.07\pm0.26$&	$1.07\pm0.25$&	$1.06\pm0.25$&	$1.06\pm0.25$\\
			1.6 &	-&	-&	-&	-&	-&	-&	$1.07\pm0.26$&	$1.06\pm0.25$&	$1.06\pm0.24$&	$1.06\pm0.24$\\
			1.7 &	-&	-&	-&	-&	-&	-&	-&	$1.06\pm0.24$&	$1.06\pm0.24$&	$1.06\pm0.24$\\
			1.8 &	-&	-&	-&	-&	-&	-&	-&	-&	$1.06\pm0.24$&	$1.06\pm0.24$\\
			1.9 &	-&	-&	-&	-&	-&	-&	-&	-&	-&	$1.06\pm0.24$\\
			\hline
	\end{tabular}}
	\caption{Average number of iterates for $q$P$p$NWAWS to reduce the cost function at a singular point on FF100MEOP (mean$\pm$STD).}
	\label{tab:ff100meopsin}
\end{table*}	

\begin{table*}[!htb]
	\centering	
	\scalebox{0.68}{
		\begin{tabular}{cccccccccccc}
			\hline
			\diagbox{q}{p}& &1.0 &1.1 & 1.2 & 1.3 & 1.4 & 1.5 & 1.6 & 1.7 & 1.8 & 1.9 \\ \hline
			\multirow{2}{*}{1.0} & $Time$&	$0.0074$&	$0.0598$&	$0.0625$&	$0.0617$&	$0.0582$&	$0.0541$&	$0.0515$&	$0.0484$&	$0.0464$&	$0.0446$\\	
			&$Iter$&	$42.23\pm19.91$&	$16.07\pm2.14$&	$15.17\pm2.15$&	$14.85\pm2.43$&	$14.04\pm2.5$&	$13.29\pm2.56$&	$12.65\pm2.59$&	$12.14\pm2.6$&	$11.73\pm2.63$&	$11.41\pm2.61$\\	
			\hline \multirow{2}{*}{1.1} & $Time$&	-&	$0.0611$&	$0.0621$&	$0.0584$&	$0.0539$&	$0.0496$&	$0.0465$&	$0.0442$&	$0.0418$&	$0.0404$\\	
			&$Iter$&	-&	$16.67\pm3.27$&	$15.21\pm1.8$&	$14.36\pm1.91$&	$13.37\pm2$&	$12.49\pm2.07$&	$11.84\pm2.1$&	$11.32\pm2.13$&	$10.89\pm2.13$&	$10.55\pm2.15$\\	
			\hline \multirow{2}{*}{1.2} & $Time$&	-&	-&	$0.0597$&	$0.0562$&	$0.0507$&	$0.0465$&	$0.0431$&	$0.0411$&	$0.0386$&	$0.0373$\\	
			&$Iter$&	-&	-&	$15.04\pm2.09$&	$13.81\pm1.48$&	$12.74\pm1.62$&	$11.83\pm1.68$&	$11.18\pm1.74$&	$10.66\pm1.76$&	$10.22\pm1.77$&	$9.88\pm1.83$\\	
			\hline \multirow{2}{*}{1.3} & $Time$&	-&	-&	-&	$0.0539$&	$0.0482$&	$0.0441$&	$0.0403$&	$0.0381$&	$0.0358$&	$0.0341$\\	
			&$Iter$&	-&	-&	-&	$13.65\pm2.87$&	$12.23\pm1.24$&	$11.29\pm1.35$&	$10.61\pm1.44$&	$10.11\pm1.46$&	$9.67\pm1.49$&	$9.33\pm1.54$\\	
			\hline \multirow{2}{*}{1.4} & $Time$&	-&	-&	-&	-&	$0.046$&	$0.0414$&	$0.0377$&	$0.0356$&	$0.0334$&	$0.0315$\\	
			&$Iter$&	-&	-&	-&	-&	$11.93\pm1.16$&	$10.81\pm1.05$&	$10.09\pm1.15$&	$9.58\pm1.21$&	$9.16\pm1.26$&	$8.82\pm1.31$\\	
			\hline \multirow{2}{*}{1.5} & $Time$&	-&	-&	-&	-&	-&	$0.0398$&	$0.0353$&	$0.0332$&	$0.0309$&	$0.029$\\	
			&$Iter$&	-&	-&	-&	-&	-&	$10.49\pm0.91$&	$9.62\pm0.91$&	$9.11\pm1$&	$8.68\pm1.06$&	$8.36\pm1.11$\\	
			\hline \multirow{2}{*}{1.6} & $Time$&	-&	-&	-&	-&	-&	-&	$0.0332$&	$0.0309$&	$0.0287$&	$0.0267$\\	
			&$Iter$&	-&	-&	-&	-&	-&	-&	$9.32\pm0.79$&	$8.68\pm0.83$&	$8.25\pm0.89$&	$7.91\pm0.95$\\	
			\hline \multirow{2}{*}{1.7} & $Time$&	-&	-&	-&	-&	-&	-&	-&	$0.0285$&	$0.0266$&	$0.0246$\\	
			&$Iter$&	-&	-&	-&	-&	-&	-&	-&	$8.31\pm0.69$&	$7.85\pm0.75$&	$7.48\pm0.84$\\	
			\hline \multirow{2}{*}{1.8} & $Time$&	-&	-&	-&	-&	-&	-&	-&	-&	$0.0242$&	$0.0224$\\	
			&$Iter$&	-&	-&	-&	-&	-&	-&	-&	-&	$7.46\pm0.75$&	$7.05\pm0.64$\\	
			\hline \multirow{2}{*}{1.9} & $Time$&	-&	-&	-&	-&	-&	-&	-&	-&	-&	$0.0205$\\	
			&$Iter$&	-&	-&	-&	-&	-&	-&	-&	-&	-&	$6.75\pm0.77$\\	
			\hline
	\end{tabular}}
	\caption{Average computational time (in seconds) and average number of iterations (mean$\pm$STD) for $q$P$p$NWAWS on FTSE100.}
	\label{tab:ftsecomt}
\end{table*}

\begin{table*}[!htb]
	\centering	
	\scalebox{0.68}{
		\begin{tabular}{cccccccccccc}
			\hline
			\diagbox{q}{p}& & 1.0 &1.1 & 1.2 & 1.3 & 1.4 & 1.5 & 1.6 & 1.7 & 1.8 & 1.9 \\ \hline \multirow{2}{*}{1.0} & $Time$&	$0.0104$&	$0.0674$&	$0.0925$&	$0.0931$&	$0.0872$&	$0.0566$&	$0.0531$&	$0.0728$&	$0.0479$&	$0.0462$\\
			&$Iter$&	$57.78\pm27.93$&	$17.13\pm2.68$&	$16.15\pm2.41$&	$15.96\pm2.64$&	$15.13\pm2.75$&	$14.28\pm2.81$&	$13.57\pm2.83$&	$12.99\pm2.81$&	$12.49\pm2.77$&	$12.11\pm2.75$\\
			\hline \multirow{2}{*}{1.1} & $Time$&	-&	$0.0836$&	$0.0908$&	$0.0856$&	$0.0784$&	$0.0511$&	$0.048$&	$0.0627$&	$0.0432$&	$0.0415$\\
			&$Iter$&	-&	$17.24\pm3.36$&	$15.9\pm2.09$&	$15.06\pm2.09$&	$13.93\pm2.17$&	$13.14\pm2.25$&	$12.49\pm2.25$&	$11.97\pm2.3$&	$11.52\pm2.25$&	$11.14\pm2.26$\\
			\hline \multirow{2}{*}{1.2} & $Time$&	-&	-&	$0.0882$&	$0.0813$&	$0.0738$&	$0.0472$&	$0.0443$&	$0.0415$&	$0.0397$&	$0.0379$\\
			&$Iter$&	-&	-&	$15.76\pm3.05$&	$14.41\pm1.67$&	$13.24\pm1.71$&	$12.36\pm1.79$&	$11.7\pm1.87$&	$11.2\pm1.9$&	$10.77\pm1.91$&	$10.42\pm1.9$\\
			\hline \multirow{2}{*}{1.3} & $Time$&	-&	-&	-&	$0.0779$&	$0.0602$&	$0.044$&	$0.041$&	$0.0381$&	$0.0368$&	$0.035$\\
			&$Iter$&	-&	-&	-&	$13.99\pm1.29$&	$12.65\pm1.31$&	$11.69\pm1.39$&	$11.04\pm1.49$&	$10.5\pm1.55$&	$10.1\pm1.59$&	$9.76\pm1.62$\\
			\hline \multirow{2}{*}{1.4} & $Time$&	-&	-&	-&	-&	$0.0462$&	$0.0413$&	$0.038$&	$0.0355$&	$0.034$&	$0.0321$\\
			&$Iter$&	-&	-&	-&	-&	$12.22\pm0.94$&	$11.15\pm1.03$&	$10.43\pm1.17$&	$9.9\pm1.26$&	$9.49\pm1.32$&	$9.15\pm1.35$\\
			\hline \multirow{2}{*}{1.5} & $Time$&	-&	-&	-&	-&	-&	$0.0387$&	$0.0438$&	$0.0331$&	$0.0309$&	$0.0294$\\
			&$Iter$&	-&	-&	-&	-&	-&	$10.7\pm0.76$&	$9.9\pm0.87$&	$9.35\pm1.01$&	$8.92\pm1.08$&	$8.61\pm1.13$\\
			\hline \multirow{2}{*}{1.6} & $Time$&	-&	-&	-&	-&	-&	-&	$0.0473$&	$0.0306$&	$0.0285$&	$0.0265$\\
			&$Iter$&	-&	-&	-&	-&	-&	-&	$9.47\pm0.67$&	$8.85\pm0.76$&	$8.41\pm0.87$&	$8.08\pm0.95$\\
			\hline \multirow{2}{*}{1.7} & $Time$&	-&	-&	-&	-&	-&	-&	-&	$0.0278$&	$0.0263$&	$0.0242$\\
			&$Iter$&	-&	-&	-&	-&	-&	-&	-&	$8.4\pm0.59$&	$7.94\pm0.67$&	$7.61\pm0.78$\\
			\hline \multirow{2}{*}{1.8} & $Time$&	-&	-&	-&	-&	-&	-&	-&	-&	$0.0239$&	$0.022$\\
			&$Iter$&	-&	-&	-&	-&	-&	-&	-&	-&	$7.53\pm0.59$&	$7.11\pm0.61$\\
			\hline \multirow{2}{*}{1.9} & $Time$&	-&	-&	-&	-&	-&	-&	-&	-&	-&	$0.02$\\
			&$Iter$&	-&	-&	-&	-&	-&	-&	-&	-&	-&	$6.76\pm0.53$\\
			\hline
	\end{tabular}}
	\caption{Average computational time (in seconds) and average number of iterations (mean$\pm$STD) for $q$P$p$NWAWS on NASDAQ100.}
	\label{tab:nasdaqcomt}
\end{table*}

\begin{table*}[!htb]
	\centering	
	\scalebox{0.68}{
		\begin{tabular}{cccccccccccc}
			\hline
			\diagbox{q}{p}& & 1.0 &1.1 & 1.2 & 1.3 & 1.4 & 1.5 & 1.6 & 1.7 & 1.8 & 1.9 \\ \hline
			\multirow{2}{*}{1.0} & $Time$&	$0.015$&	$0.1253$&	$0.13$&	$0.1296$&	$0.1261$&	$0.1244$&	$0.1216$&	$0.1197$&	$0.1177$&	$0.117$\\	
			&$Iter$&	$68.9\pm32.57$&	$26.12\pm27.31$&	$25.54\pm25.66$&	$25.2\pm24.24$&	$24.75\pm23.87$&	$24.21\pm23.47$&	$23.64\pm23.02$&	$23.44\pm23.29$&	$23.29\pm23.7$&	$23.15\pm24.11$\\	
			\hline \multirow{2}{*}{1.1} & $Time$&	-&	$0.1281$&	$0.1274$&	$0.126$&	$0.1211$&	$0.1185$&	$0.1134$&	$0.1106$&	$0.1069$&	$0.1055$\\	
			&$Iter$&	-&	$26.68\pm26.77$&	$25.48\pm25.71$&	$24.79\pm24.45$&	$23.98\pm23.56$&	$23.24\pm23.28$&	$22.55\pm22.94$&	$22.03\pm22.93$&	$21.66\pm23.27$&	$21.37\pm23.6$\\	
			\hline \multirow{2}{*}{1.2} & $Time$&	-&	-&	$0.1253$&	$0.1222$&	$0.1164$&	$0.111$&	$0.1073$&	$0.103$&	$0.099$&	$0.0982$\\	
			&$Iter$&	-&	-&	$25.22\pm25.49$&	$24.33\pm24.71$&	$23.38\pm24.09$&	$22.25\pm23.31$&	$21.52\pm23.11$&	$20.99\pm23.05$&	$20.59\pm23.27$&	$20.29\pm23.51$\\	
			\hline \multirow{2}{*}{1.3} & $Time$&	-&	-&	-&	$0.1163$&	$0.1317$&	$0.1051$&	$0.1013$&	$0.0974$&	$0.0943$&	$0.0928$\\	
			&$Iter$&	-&	-&	-&	$23.73\pm24.69$&	$22.68\pm24.31$&	$21.43\pm23.4$&	$20.75\pm23.27$&	$20.21\pm23.24$&	$19.81\pm23.39$&	$19.51\pm23.58$\\	
			\hline \multirow{2}{*}{1.4} & $Time$&	-&	-&	-&	-&	$0.1047$&	$0.0992$&	$0.0946$&	$0.0921$&	$0.0884$&	$0.0863$\\	
			&$Iter$&	-&	-&	-&	-&	$21.68\pm24.14$&	$20.66\pm23.7$&	$19.97\pm23.51$&	$19.47\pm23.45$&	$19.05\pm23.53$&	$18.75\pm23.67$\\	
			\hline \multirow{2}{*}{1.5} & $Time$&	-&	-&	-&	-&	-&	$0.0928$&	$0.0898$&	$0.0868$&	$0.0837$&	$0.0813$\\	
			&$Iter$&	-&	-&	-&	-&	-&	$19.87\pm23.84$&	$19.25\pm23.75$&	$18.78\pm23.77$&	$18.35\pm23.73$&	$18.01\pm23.81$\\	
			\hline \multirow{2}{*}{1.6} & $Time$&	-&	-&	-&	-&	-&	-&	$0.0839$&	$0.0817$&	$0.0789$&	$0.0764$\\	
			&$Iter$&	-&	-&	-&	-&	-&	-&	$18.57\pm23.96$&	$18.16\pm24.03$&	$17.74\pm24.07$&	$17.43\pm24.23$\\	
			\hline \multirow{2}{*}{1.7} & $Time$&	-&	-&	-&	-&	-&	-&	-&	$0.0763$&	$0.0746$&	$0.0722$\\	
			&$Iter$&	-&	-&	-&	-&	-&	-&	-&	$17.53\pm24.24$&	$17.14\pm24.24$&	$16.76\pm24.31$\\	
			\hline \multirow{2}{*}{1.8} & $Time$&	-&	-&	-&	-&	-&	-&	-&	-&	$0.0699$&	$0.0678$\\	
			&$Iter$&	-&	-&	-&	-&	-&	-&	-&	-&	$16.61\pm24.45$&	$16.19\pm24.52$\\	
			\hline \multirow{2}{*}{1.9} & $Time$&	-&	-&	-&	-&	-&	-&	-&	-&	-&	$0.0631$\\	
			&$Iter$&	-&	-&	-&	-&	-&	-&	-&	-&	-&	$15.63\pm24.72$\\
			\hline
	\end{tabular}}
	\caption{Average computational time (in seconds) and average number of iterations (mean$\pm$STD) for $q$P$p$NWAWS on FF100.}
	\label{tab:ff100comt}
\end{table*}

\begin{table*}[!htb]
	\centering	
	\scalebox{0.68}{
		\begin{tabular}{cccccccccccc}
			\hline
			\diagbox{q}{p}& & 1.0 &1.1 & 1.2 & 1.3 & 1.4 & 1.5 & 1.6 & 1.7 & 1.8 & 1.9 \\ \hline
			\multirow{2}{*}{1.0} & $Time$&	$0.0144$&	$0.0965$&	$0.1022$&	$0.1053$&	$0.105$&	$0.1031$&	$0.0987$&	$0.0965$&	$0.0956$&	$0.0935$\\
			&$Iter$&	$65.22\pm33.00$&	$16.55\pm4.04$&	$16.5\pm3.71$&	$17\pm3.62$&	$16.88\pm3.62$&	$16.5\pm3.72$&	$16.18\pm3.77$&	$15.86\pm3.8$&	$15.56\pm3.83$&	$15.21\pm3.9$\\
			\hline \multirow{2}{*}{1.1} & $Time$&	-&	$0.0938$&	$0.0974$&	$0.0978$&	$0.0954$&	$0.0923$&	$0.088$&	$0.0848$&	$0.0829$&	$0.0799$\\
			&$Iter$&	-&	$16.26\pm2.91$&	$15.81\pm2.81$&	$15.87\pm2.8$&	$15.57\pm2.78$&	$15.11\pm2.79$&	$14.66\pm2.83$&	$14.21\pm2.86$&	$13.8\pm2.89$&	$13.41\pm2.95$\\
			\hline \multirow{2}{*}{1.2} & $Time$&	-&	-&	$0.0961$&	$0.0946$&	$0.0903$&	$0.0857$&	$0.0805$&	$0.0779$&	$0.0751$&	$0.0725$\\
			&$Iter$&	-&	-&	$15.76\pm2.93$&	$15.38\pm2.59$&	$14.82\pm2.45$&	$14.2\pm2.46$&	$13.65\pm2.46$&	$13.15\pm2.48$&	$12.77\pm2.49$&	$12.35\pm2.53$\\
			\hline \multirow{2}{*}{1.3} & $Time$&	-&	-&	-&	$0.089$&	$0.0854$&	$0.0786$&	$0.0749$&	$0.072$&	$0.0687$&	$0.0664$\\
			&$Iter$&	-&	-&	-&	$14.79\pm2.09$&	$14.04\pm2.04$&	$13.37\pm2.05$&	$12.79\pm2.08$&	$12.32\pm2.12$&	$11.9\pm2.14$&	$11.55\pm2.19$\\
			\hline \multirow{2}{*}{1.4} & $Time$&	-&	-&	-&	-&	$0.0774$&	$0.0719$&	$0.0683$&	$0.0666$&	$0.0628$&	$0.0601$\\
			&$Iter$&	-&	-&	-&	-&	$13.12\pm1.5$&	$12.47\pm1.57$&	$11.9\pm1.63$&	$11.48\pm1.7$&	$11.08\pm1.74$&	$10.72\pm1.84$\\
			\hline \multirow{2}{*}{1.5} & $Time$&	-&	-&	-&	-&	-&	$0.0647$&	$0.0617$&	$0.0596$&	$0.0568$&	$0.0539$\\
			&$Iter$&	-&	-&	-&	-&	-&	$11.57\pm1.18$&	$11.05\pm1.24$&	$10.61\pm1.34$&	$10.25\pm1.38$&	$9.93\pm1.48$\\
			\hline \multirow{2}{*}{1.6} & $Time$&	-&	-&	-&	-&	-&	-&	$0.0558$&	$0.0536$&	$0.0504$&	$0.0481$\\
			&$Iter$&	-&	-&	-&	-&	-&	-&	$10.24\pm0.96$&	$9.84\pm1.02$&	$9.47\pm1.09$&	$9.15\pm1.2$\\
			\hline \multirow{2}{*}{1.7} & $Time$&	-&	-&	-&	-&	-&	-&	-&	$0.0481$&	$0.0452$&	$0.0429$\\
			&$Iter$&	-&	-&	-&	-&	-&	-&	-&	$9.13\pm0.77$&	$8.75\pm0.83$&	$8.45\pm0.91$\\
			\hline \multirow{2}{*}{1.8} & $Time$&	-&	-&	-&	-&	-&	-&	-&	-&	$0.0402$&	$0.0379$\\
			&$Iter$&	-&	-&	-&	-&	-&	-&	-&	-&	$8.07\pm0.62$&	$7.77\pm0.71$\\
			\hline \multirow{2}{*}{1.9} & $Time$&	-&	-&	-&	-&	-&	-&	-&	-&	-&	$0.0323$\\
			&$Iter$&	-&	-&	-&	-&	-&	-&	-&	-&	-&	$7.04\pm0.52$\\
			\hline
	\end{tabular}}
	\caption{Average computational time (in seconds) and average number of iterations (mean$\pm$STD) for $q$P$p$NWAWS on FF100MEOP.}
	\label{tab:ff100meopcomt}
\end{table*}

\begin{table*}[t]
	\centering	
	\scalebox{0.8}{
		\begin{tabular}{c c c c c c c c c c c}
			\hline
			\diagbox{q}{p}& 1.0& 1.1 & 1.2 & 1.3 & 1.4 & 1.5 & 1.6 & 1.7 & 1.8 & 1.9 \\ \hline
			1.0 &	$0.84\pm0.06$&	$0.65\pm0.04$&	$0.64\pm0.04$&	$0.62\pm0.05$&	$0.59\pm0.05$&	$0.56\pm0.06$&	$0.53\pm0.07$&	$0.51\pm0.08$&	$0.49\pm0.09$&	$0.47\pm0.09$\\
			1.1 &	-&	$0.63\pm0.05$&	$0.62\pm0.04$&	$0.59\pm0.05$&	$0.55\pm0.06$&	$0.51\pm0.06$&	$0.48\pm0.07$&	$0.46\pm0.08$&	$0.44\pm0.08$&	$0.42\pm0.09$\\
			1.2 &	-&	-&	$0.61\pm0.05$&	$0.57\pm0.04$&	$0.53\pm0.05$&	$0.49\pm0.05$&	$0.46\pm0.06$&	$0.43\pm0.07$&	$0.41\pm0.07$&	$0.39\pm0.08$\\
			1.3 &	-&	-&	-&	$0.56\pm0.04$&	$0.51\pm0.04$&	$0.47\pm0.04$&	$0.43\pm0.05$&	$0.40\pm0.06$&	$0.37\pm0.07$&	$0.35\pm0.07$\\
			1.4 &	-&	-&	-&	-&	$0.50\pm0.03$&	$0.44\pm0.03$&	$0.40\pm0.04$&	$0.36\pm0.05$&	$0.33\pm0.06$&	$0.31\pm0.06$\\
			1.5 &	-&	-&	-&	-&	-&	$0.42\pm0.03$&	$0.37\pm0.03$&	$0.33\pm0.04$&	$0.3\pm0.04$&	$0.27\pm0.05$\\
			1.6 &	-&	-&	-&	-&	-&	-&	$0.35\pm0.03$&	$0.3\pm0.02$&	$0.26\pm0.03$&	$0.23\pm0.04$\\
			1.7 &	-&	-&	-&	-&	-&	-&	-&	$0.26\pm0.02$&	$0.22\pm0.02$&	$0.18\pm0.03$\\
			1.8 &	-&	-&	-&	-&	-&	-&	-&	-&	$0.18\pm0.03$&	$0.14\pm0.02$\\
			1.9 &	-&	-&	-&	-&	-&	-&	-&	-&	-&	$0.09\pm0.03$\\
			\hline
	\end{tabular}}
	\caption{Average computational convergence rate (mean$\pm$STD) for $q$P$p$NWAWS on FTSE100.}
	\label{tab:ftseconr}
\end{table*}

\begin{table*}[t]
	\centering	
	\scalebox{0.8}{
		\begin{tabular}{c c c c c c c c c c c }
			\hline
			\diagbox{q}{p}& 1.0&1.1 & 1.2 & 1.3 & 1.4 & 1.5 & 1.6 & 1.7 & 1.8 & 1.9 \\ \hline
			1.0 &	$0.88\pm0.06$&	$0.66\pm0.04$&	$0.65\pm0.04$&	$0.63\pm0.05$&	$0.61\pm0.06$&	$0.58\pm0.06$&	$0.55\pm0.07$&	$0.53\pm0.08$&	$0.51\pm0.09$&	$0.49\pm0.09$\\
			1.1 &	-&	$0.62\pm0.05$&	$0.63\pm0.05$&	$0.6\pm0.05$&	$0.56\pm0.06$&	$0.53\pm0.07$&	$0.5\pm0.07$&	$0.48\pm0.08$&	$0.46\pm0.08$&	$0.44\pm0.09$\\
			1.2 &	-&	-&	$0.62\pm0.05$&	$0.58\pm0.04$&	$0.54\pm0.05$&	$0.5\pm0.06$&	$0.47\pm0.06$&	$0.44\pm0.07$&	$0.42\pm0.08$&	$0.41\pm0.08$\\
			1.3 &	-&	-&	-&	$0.57\pm0.04$&	$0.52\pm0.04$&	$0.47\pm0.05$&	$0.44\pm0.06$&	$0.41\pm0.06$&	$0.38\pm0.07$&	$0.36\pm0.07$\\
			1.4 &	-&	-&	-&	-&	$0.5\pm0.03$&	$0.45\pm0.03$&	$0.41\pm0.04$&	$0.37\pm0.05$&	$0.34\pm0.06$&	$0.32\pm0.07$\\
			1.5 &	-&	-&	-&	-&	-&	$0.43\pm0.03$&	$0.38\pm0.03$&	$0.34\pm0.04$&	$0.3\pm0.05$&	$0.28\pm0.06$\\
			1.6 &	-&	-&	-&	-&	-&	-&	$0.35\pm0.02$&	$0.3\pm0.03$&	$0.26\pm0.04$&	$0.23\pm0.04$\\
			1.7 &	-&	-&	-&	-&	-&	-&	-&	$0.27\pm0.02$&	$0.22\pm0.02$&	$0.19\pm0.03$\\
			1.8 &	-&	-&	-&	-&	-&	-&	-&	-&	$0.18\pm0.02$&	$0.14\pm0.02$\\
			1.9 &	-&	-&	-&	-&	-&	-&	-&	-&	-&	$0.09\pm0.02$\\
			\hline
	\end{tabular}}
	\caption{Average computational convergence rate (mean$\pm$STD) for $q$P$p$NWAWS on NASDAQ100.}
	\label{tab:nasdaqconr}
\end{table*}

\begin{table*}[t]
	\centering	
	\scalebox{0.8}{
		\begin{tabular}{c c c c c c c c c c c }
			\hline
			\diagbox{q}{p}& 1.0&1.1 & 1.2 & 1.3 & 1.4 & 1.5 & 1.6 & 1.7 & 1.8 & 1.9 \\ \hline
			1.0 &	$0.89\pm0.08$&	$0.66\pm0.2$&	$0.67\pm0.19$&	$0.67\pm0.17$&	$0.66\pm0.18$&	$0.65\pm0.17$&	$0.64\pm0.16$&	$0.63\pm0.17$&	$0.62\pm0.18$&	$0.61\pm0.18$\\
			1.1 &	-&	$0.68\pm0.11$&	$0.68\pm0.11$&	$0.67\pm0.12$&	$0.65\pm0.12$&	$0.63\pm0.13$&	$0.62\pm0.13$&	$0.6\pm0.15$&	$0.58\pm0.16$&	$0.57\pm0.17$\\
			1.2 &	-&	-&	$0.67\pm0.12$&	$0.65\pm0.12$&	$0.63\pm0.13$&	$0.6\pm0.13$&	$0.58\pm0.14$&	$0.56\pm0.15$&	$0.55\pm0.17$&	$0.53\pm0.19$\\
			1.3 &	-&	-&	-&	$0.64\pm0.12$&	$0.61\pm0.14$&	$0.58\pm0.14$&	$0.55\pm0.15$&	$0.53\pm0.16$&	$0.51\pm0.18$&	$0.5\pm0.2$\\
			1.4 &	-&	-&	-&	-&	$0.58\pm0.14$&	$0.55\pm0.15$&	$0.52\pm0.16$&	$0.5\pm0.18$&	$0.48\pm0.18$&	$0.46\pm0.22$\\
			1.5 &	-&	-&	-&	-&	-&	$0.52\pm0.16$&	$0.48\pm0.17$&	$0.46\pm0.18$&	$0.43\pm0.2$&	$0.42\pm0.22$\\
			1.6 &	-&	-&	-&	-&	-&	-&	$0.44\pm0.18$&	$0.42\pm0.2$&	$0.39\pm0.21$&	$0.37\pm0.22$\\
			1.7 &	-&	-&	-&	-&	-&	-&	-&	$0.37\pm0.21$&	$0.34\pm0.23$&	$0.32\pm0.23$\\
			1.8 &	-&	-&	-&	-&	-&	-&	-&	-&	$0.3\pm0.26$&	$0.26\pm0.25$\\
			1.9 &	-&	-&	-&	-&	-&	-&	-&	-&	-&	$0.21\pm0.27$\\
			\hline
	\end{tabular}}
	\caption{Average computational convergence rate (mean$\pm$STD) for $q$P$p$NWAWS on FF100.}
	\label{tab:ff100conr}
\end{table*}

\begin{table*}[t]
	\centering	
	\scalebox{0.8}{
		\begin{tabular}{c c c c c c c c c c c }
			\hline
			\diagbox{q}{p}& 1.0&1.1 & 1.2 & 1.3 & 1.4 & 1.5 & 1.6 & 1.7 & 1.8 & 1.9 \\ \hline
			1.0 &	$0.88\pm0.09$&	$0.64\pm0.14$&	$0.64\pm0.13$&	$0.64\pm0.11$&	$0.64\pm0.1$&	$0.62\pm0.09$&	$0.61\pm0.1$&	$0.6\pm0.09$&	$0.59\pm0.09$&	$0.58\pm0.1$\\
			1.1 &	-&	$0.64\pm0.05$&	$0.63\pm0.05$&	$0.62\pm0.05$&	$0.6\pm0.05$&	$0.59\pm0.06$&	$0.57\pm0.06$&	$0.55\pm0.07$&	$0.54\pm0.08$&	$0.52\pm0.09$\\
			1.2 &	-&	-&	$0.62\pm0.05$&	$0.61\pm0.04$&	$0.58\pm0.05$&	$0.56\pm0.05$&	$0.53\pm0.06$&	$0.51\pm0.07$&	$0.5\pm0.07$&	$0.48\pm0.08$\\
			1.3 &	-&	-&	-&	$0.59\pm0.04$&	$0.56\pm0.04$&	$0.53\pm0.05$&	$0.5\pm0.06$&	$0.48\pm0.06$&	$0.46\pm0.07$&	$0.44\pm0.08$\\
			1.4 &	-&	-&	-&	-&	$0.53\pm0.03$&	$0.49\pm0.04$&	$0.46\pm0.05$&	$0.44\pm0.06$&	$0.41\pm0.06$&	$0.39\pm0.07$\\
			1.5 &	-&	-&	-&	-&	-&	$0.45\pm0.03$&	$0.42\pm0.04$&	$0.39\pm0.04$&	$0.37\pm0.05$&	$0.34\pm0.07$\\
			1.6 &	-&	-&	-&	-&	-&	-&	$0.37\pm0.02$&	$0.34\pm0.03$&	$0.31\pm0.04$&	$0.29\pm0.05$\\
			1.7 &	-&	-&	-&	-&	-&	-&	-&	$0.29\pm0.02$&	$0.26\pm0.03$&	$0.23\pm0.04$\\
			1.8 &	-&	-&	-&	-&	-&	-&	-&	-&	$0.2\pm0.02$&	$0.17\pm0.03$\\
			1.9 &	-&	-&	-&	-&	-&	-&	-&	-&	-&	$0.1\pm0.01$\\	
			\hline
	\end{tabular}}
	\caption{Average computational convergence rate (mean$\pm$STD) for $q$P$p$NWAWS on FF100MEOP.}
	\label{tab:ff100meopconr}
\end{table*}

\begin{table*}[!htb]
	\centering	
	\scalebox{0.7}{
		\begin{tabular}{cccccccccccc}
			\hline
			\diagbox{q}{p}& &1.0&1.1 & 1.2 & 1.3 & 1.4 & 1.5 & 1.6 & 1.7 & 1.8 & 1.9 \\ \hline
			 \multirow{2}{*}{1.0} & CW&	$45.4645$&	$75.5157$&	$\bf 78.5411$&	$68.5563$&	$64.3221$&	$60.4422$&	$60.0741$&	$56.2867$&	$51.9187$&	$48.8151$\\
			&SR&	$0.1061$&	$0.1166$&	$\bf0.1172$&	$0.1143$&	$0.1129$&	$0.1116$&	$0.1114$&	$0.1102$&	$0.1087$&	$0.1076$\\
			\hline \multirow{2}{*}{1.1} & CW&	-&	$70.6336$&	$73.1394$&	$62.1984$&	$61.933$&	$64.6889$&	$59.7303$&	$54.9652$&	$52.7812$&	$53.578$\\
			&SR&	-&	$0.1153$&	$0.1158$&	$0.1124$&	$0.1121$&	$0.1128$&	$0.1113$&	$0.1097$&	$0.109$&	$0.1092$\\
			\hline \multirow{2}{*}{1.2} & CW&	-&	-&	$66.0843$&	$57.5627$&	$58.7638$&	$60.6815$&	$57.1283$&	$53.0324$&	$54.1895$&	$56.1977$\\
			&SR&	-&	-&	$0.1138$&	$0.1109$&	$0.1111$&	$0.1115$&	$0.1104$&	$0.109$&	$0.1094$&	$0.1101$\\
			\hline \multirow{2}{*}{1.3} & CW&	-&	-&	-&	$50.2791$&	$54.8675$&	$55.6076$&	$53.038$&	$51.177$&	$51.75$&	$54.4712$\\
			&SR&	-&	-&	-&	$0.1084$&	$0.1098$&	$0.1099$&	$0.1091$&	$0.1084$&	$0.1086$&	$0.1095$\\
			\hline \multirow{2}{*}{1.4} & CW&	-&	-&	-&	-&	$51.8308$&	$52.7809$&	$50.6453$&	$50.9802$&	$51.4462$&	$54.142$\\
			&SR&	-&	-&	-&	-&	$0.1087$&	$0.109$&	$0.1082$&	$0.1083$&	$0.1084$&	$0.1093$\\
			\hline \multirow{2}{*}{1.5} & CW&	-&	-&	-&	-&	-&	$52.0359$&	$51.4891$&	$51.3103$&	$51.1554$&	$54.3556$\\
			&SR&	-&	-&	-&	-&	-&	$0.1087$&	$0.1085$&	$0.1084$&	$0.1083$&	$0.1094$\\
			\hline \multirow{2}{*}{1.6} & CW&	-&	-&	-&	-&	-&	-&	$52.727$&	$52.3873$&	$52.002$&	$53.9062$\\
			&SR&	-&	-&	-&	-&	-&	-&	$0.1089$&	$0.1088$&	$0.1086$&	$0.1092$\\
			\hline \multirow{2}{*}{1.7} & CW&	-&	-&	-&	-&	-&	-&	-&	$54.0681$&	$54.2055$&	$54.4498$\\
			&SR&	-&	-&	-&	-&	-&	-&	-&	$0.1094$&	$0.1093$&	$0.1093$\\
			\hline \multirow{2}{*}{1.8} & CW&	-&	-&	-&	-&	-&	-&	-&	-&	$56.2806$&	$55.3808$\\
			&SR&	-&	-&	-&	-&	-&	-&	-&	-&	$0.11$&	$0.1096$\\
			\hline \multirow{2}{*}{1.9} & CW&	-&	-&	-&	-&	-&	-&	-&	-&	-&	$55.1256$\\
			&SR&	-&	-&	-&	-&	-&	-&	-&	-&	-&	$0.1095$\\	
			\hline
	\end{tabular}}
	\caption{Cumulative wealth (CW) and Sharpe Ratio (SR) of $q$P$p$NWAWS on FTSE100. The CW and SR for the original setting $(q,p)=(1,2)$ are $50.1765$ and $0.1081$, respectively.}
	\label{tab:ftsecwsr}
\end{table*}

\begin{table*}[!htb]
	\centering	
	\scalebox{0.65}{
		\begin{tabular}{cccccccccccc}
			\hline
			\diagbox{q}{p}& &1.0&1.1 & 1.2 & 1.3 & 1.4 & 1.5 & 1.6 & 1.7 & 1.8 & 1.9 \\ \hline
			 \multirow{2}{*}{1.0} & CW&	$1.3759$&	$7.6786$&	$\bf8.1831$&	$8.0839$&	$7.0116$&	$6.1615$&	$5.5029$&	$5.1351$&	$4.8972$&	$4.6473$\\
			&SR&	$0.0393$&	$0.0873$&	$\bf0.0892$&	$0.0892$&	$0.0854$&	$0.0818$&	$0.0783$&	$0.0761$&	$0.0746$&	$0.073$\\\hline
			\multirow{2}{*}{1.1} & CW&	-&	$6.4397$&	$7.2775$&	$7.0786$&	$6.1042$&	$5.3003$&	$4.9398$&	$4.7081$&	$4.5468$&	$4.3697$\\
			&SR&	-&	$0.0823$&	$0.0858$&	$0.0852$&	$0.0813$&	$0.0772$&	$0.0751$&	$0.0735$&	$0.0724$&	$0.0712$\\\hline
			\multirow{2}{*}{1.2} & CW&	-&	-&	$6.7903$&	$6.4068$&	$5.2845$&	$4.7138$&	$4.545$&	$4.458$&	$4.2981$&	$4.0487$\\
			&SR&	-&	-&	$0.0839$&	$0.0824$&	$0.0771$&	$0.0738$&	$0.0726$&	$0.0719$&	$0.0708$&	$0.069$\\\hline
			\multirow{2}{*}{1.3} & CW&	-&	-&	-&	$6.1334$&	$4.8934$&	$4.2499$&	$4.2272$&	$4.2161$&	$4.0197$&	$3.8176$\\
			&SR&	-&	-&	-&	$0.0813$&	$0.0749$&	$0.0708$&	$0.0705$&	$0.0703$&	$0.0689$&	$0.0674$\\\hline
			\multirow{2}{*}{1.4} & CW&	-&	-&	-&	-&	$4.9208$&	$4.368$&	$4.1212$&	$3.9263$&	$3.7898$&	$3.6361$\\
			&SR&	-&	-&	-&	-&	$0.0752$&	$0.0716$&	$0.0698$&	$0.0683$&	$0.0673$&	$0.066$\\\hline
			\multirow{2}{*}{1.5} & CW&	-&	-&	-&	-&	-&	$4.3312$&	$3.8162$&	$3.5874$&	$3.4547$&	$3.3215$\\
			&SR&	-&	-&	-&	-&	-&	$0.0714$&	$0.0676$&	$0.0658$&	$0.0646$&	$0.0635$\\\hline
			\multirow{2}{*}{1.6} & CW&	-&	-&	-&	-&	-&	-&	$3.5415$&	$3.2637$&	$3.1451$&	$3.0999$\\
			&SR&	-&	-&	-&	-&	-&	-&	$0.0655$&	$0.0631$&	$0.062$&	$0.0615$\\\hline
			\multirow{2}{*}{1.7} & CW&	-&	-&	-&	-&	-&	-&	-&	$3.0056$&	$2.8606$&	$2.8844$\\
			&SR&	-&	-&	-&	-&	-&	-&	-&	$0.0607$&	$0.0593$&	$0.0595$\\\hline
			\multirow{2}{*}{1.8} & CW&	-&	-&	-&	-&	-&	-&	-&	-&	$2.6534$&	$2.6911$\\
			&SR&	-&	-&	-&	-&	-&	-&	-&	-&	$0.0572$&	$0.0576$\\\hline
			\multirow{2}{*}{1.9} & CW&	-&	-&	-&	-&	-&	-&	-&	-&	-&	$2.5943$\\
			&SR&	-&	-&	-&	-&	-&	-&	-&	-&	-&	$0.0566$\\
			\hline
	\end{tabular}}
	\caption{Cumulative wealth (CW) and Sharpe Ratio (SR) of $q$P$p$NWAWS on NASDAQ100. The CW and SR for the original setting $(q,p)=(1,2)$ are $4.4003$ and $0.0714$, respectively.}
	\label{tab:nasdaqcwsr}
\end{table*}	

\begin{table*}[!htb]
	\centering	
	\scalebox{0.65}{
		\begin{tabular}{cccccccccccc}
			\hline
			\diagbox{q}{p}& &1.0&1.1 & 1.2 & 1.3 & 1.4 & 1.5 & 1.6 & 1.7 & 1.8 & 1.9 \\ \hline
			 \multirow{2}{*}{1.0} & CW&	$\bf78.2733$&	$66.1289$&	$65.4248$&	$62.4756$&	$60.487$&	$60.581$&	$59.5244$&	$58.6327$&	$58.2801$&	$57.9964$\\
			&SR&	$\bf0.1684$&	$0.161$&	$0.1606$&	$0.1584$&	$0.1572$&	$0.1575$&	$0.1571$&	$0.1564$&	$0.1561$&	$0.1558$\\
			\hline \multirow{2}{*}{1.1} & CW&	-&	$71.5448$&	$70.1542$&	$68.908$&	$63.4955$&	$64.5905$&	$62.6362$&	$61.8175$&	$61.2322$&	$61.8582$\\
			&SR&	-&	$0.1636$&	$0.1625$&	$0.1617$&	$0.1589$&	$0.1592$&	$0.1587$&	$0.1582$&	$0.1578$&	$0.158$\\
			\hline \multirow{2}{*}{1.2} & CW&	-&	-&	$71.6484$&	$67.7225$&	$69.1062$&	$65.4196$&	$64.0382$&	$63.7063$&	$63.1599$&	$63.7408$\\
			&SR&	-&	-&	$0.163$&	$0.161$&	$0.1618$&	$0.1599$&	$0.1593$&	$0.1592$&	$0.1589$&	$0.1591$\\
			\hline \multirow{2}{*}{1.3} & CW&	-&	-&	-&	$71.903$&	$77.1214$&	$66.4467$&	$65.345$&	$65.4356$&	$64.8402$&	$65.3721$\\
			&SR&	-&	-&	-&	$0.1635$&	$0.1653$&	$0.1609$&	$0.1603$&	$0.1602$&	$0.1598$&	$0.16$\\
			\hline \multirow{2}{*}{1.4} & CW&	-&	-&	-&	-&	$68.3831$&	$69.2298$&	$66.8632$&	$67.1484$&	$66.3637$&	$66.7174$\\
			&SR&	-&	-&	-&	-&	$0.1616$&	$0.1623$&	$0.161$&	$0.161$&	$0.1605$&	$0.1607$\\
			\hline \multirow{2}{*}{1.5} & CW&	-&	-&	-&	-&	-&	$70.7036$&	$69.0821$&	$68.5513$&	$67.7569$&	$68.025$\\
			&SR&	-&	-&	-&	-&	-&	$0.1628$&	$0.162$&	$0.1617$&	$0.1612$&	$0.1613$\\
			\hline \multirow{2}{*}{1.6} & CW&	-&	-&	-&	-&	-&	-&	$71.661$&	$70.1756$&	$69.0988$&	$69.2098$\\
			&SR&	-&	-&	-&	-&	-&	-&	$0.1632$&	$0.1624$&	$0.1618$&	$0.1618$\\
			\hline \multirow{2}{*}{1.7} & CW&	-&	-&	-&	-&	-&	-&	-&	$71.4712$&	$70.1408$&	$69.7693$\\
			&SR&	-&	-&	-&	-&	-&	-&	-&	$0.163$&	$0.1623$&	$0.1621$\\
			\hline \multirow{2}{*}{1.8} & CW&	-&	-&	-&	-&	-&	-&	-&	-&	$70.967$&	$70.3666$\\
			&SR&	-&	-&	-&	-&	-&	-&	-&	-&	$0.1626$&	$0.1623$\\
			\hline \multirow{2}{*}{1.9} & CW&	-&	-&	-&	-&	-&	-&	-&	-&	-&	$71.0373$\\
			&SR&	-&	-&	-&	-&	-&	-&	-&	-&	-&	$0.1626$\\	
			\hline
	\end{tabular}}
	\caption{Cumulative wealth (CW) and Sharpe Ratio (SR) of $q$P$p$NWAWS on FF100. The CW and SR for the original setting $(q,p)=(1,2)$ are $60.2157$ and $0.1569$, respectively.}
	\label{tab:ff100cwsr}
\end{table*}

\begin{table*}[!htb]
	\centering	
	\scalebox{0.65}{
		\begin{tabular}{cccccccccccc}
			\hline
			\diagbox{q}{p}& &1.0&1.1 & 1.2 & 1.3 & 1.4 & 1.5 & 1.6 & 1.7 & 1.8 & 1.9 \\ \hline \multirow{2}{*}{1.0} & CW&	$\bf117.2168$&	$99.7265$&	$97.2394$&	$95.0089$&	$95.7351$&	$95.961$&	$96.2792$&	$95.7062$&	$95.7199$&	$95.7733$\\
			&SR&	$\bf0.1799$&	$0.1737$&	$0.1727$&	$0.1719$&	$0.1723$&	$0.1724$&	$0.1726$&	$0.1725$&	$0.1725$&	$0.1726$\\
			\hline \multirow{2}{*}{1.1} & CW&	-&	$99.9262$&	$98.5504$&	$96.8576$&	$96.7715$&	$96.9106$&	$97.2259$&	$97.5471$&	$97.8084$&	$98.3201$\\
			&SR&	-&	$0.1736$&	$0.1731$&	$0.1725$&	$0.1726$&	$0.1727$&	$0.1729$&	$0.1731$&	$0.1732$&	$0.1735$\\
			\hline \multirow{2}{*}{1.2} & CW&	-&	-&	$100.3027$&	$98.9845$&	$98.8171$&	$98.9833$&	$99.4045$&	$99.7495$&	$100.1719$&	$100.8471$\\
			&SR&	-&	-&	$0.1737$&	$0.1733$&	$0.1734$&	$0.1735$&	$0.1737$&	$0.1739$&	$0.1741$&	$0.1744$\\
			\hline \multirow{2}{*}{1.3} & CW&	-&	-&	-&	$102.0236$&	$101.7188$&	$101.856$&	$102.1336$&	$102.4828$&	$102.8048$&	$103.4895$\\
			&SR&	-&	-&	-&	$0.1744$&	$0.1744$&	$0.1745$&	$0.1747$&	$0.1749$&	$0.175$&	$0.1753$\\
			\hline \multirow{2}{*}{1.4} & CW&	-&	-&	-&	-&	$104.8142$&	$105.0038$&	$105.1909$&	$105.4119$&	$105.5615$&	$106.1576$\\
			&SR&	-&	-&	-&	-&	$0.1755$&	$0.1756$&	$0.1758$&	$0.1759$&	$0.176$&	$0.1762$\\
			\hline \multirow{2}{*}{1.5} & CW&	-&	-&	-&	-&	-&	$108.0347$&	$108.0789$&	$108.1193$&	$108.2833$&	$108.7254$\\
			&SR&	-&	-&	-&	-&	-&	$0.1766$&	$0.1767$&	$0.1768$&	$0.1769$&	$0.1771$\\
			\hline \multirow{2}{*}{1.6} & CW&	-&	-&	-&	-&	-&	-&	$110.7385$&	$110.7341$&	$110.5933$&	$111.0338$\\
			&SR&	-&	-&	-&	-&	-&	-&	$0.1776$&	$0.1777$&	$0.1777$&	$0.1779$\\
			\hline \multirow{2}{*}{1.7} & CW&	-&	-&	-&	-&	-&	-&	-&	$112.885$&	$112.6995$&	$113.0243$\\
			&SR&	-&	-&	-&	-&	-&	-&	-&	$0.1784$&	$0.1783$&	$0.1785$\\
			\hline \multirow{2}{*}{1.8} & CW&	-&	-&	-&	-&	-&	-&	-&	-&	$114.5054$&	$114.8594$\\
			&SR&	-&	-&	-&	-&	-&	-&	-&	-&	$0.1789$&	$0.1791$\\
			\hline \multirow{2}{*}{1.9} & CW&	-&	-&	-&	-&	-&	-&	-&	-&	-&	$116.612$\\
			&SR&	-&	-&	-&	-&	-&	-&	-&	-&	-&	$0.1796$\\
			\hline
	\end{tabular}}
	\caption{Cumulative wealth (CW) and Sharpe Ratio (SR) of $q$P$p$NWAWS on FF100MEOP. The CW and SR for the original setting $(q,p)=(1,2)$ are $95.7632$ and $0.1726$, respectively.}
	\label{tab:ff100meopcwsr}
\end{table*}

\end{document}